\newtheorem{theorem}[equation]{Theorem}
\newtheorem{lemma}[equation]{Lemma}
\newtheorem{claim}[equation]{Claim}
\newtheorem{definition}[equation]{Definition}
\newtheorem{remark}[equation]{Remark}
\newtheorem{problem}[equation]{Problem}
\newcommand{\eref}[1]{(\ref{e.#1})}
\newcommand{\tref}[1]{Theorem \ref{t.#1}}
\newcommand{\lref}[1]{Lemma \ref{l.#1}}
\newcommand{\cref}[1]{Corollary \ref{c.#1}}
\newcommand{\clref}[1]{Claim \ref{cl.#1}}
\newcommand{\fref}[1]{Figure \ref{f.#1}}
\newcommand{\sref}[1]{Section \ref{s.#1}}
\numberwithin{equation}{section}
\numberwithin{figure}{section}
\newcommand{\Z}{\mathbb{Z}}
\newcommand{\R}{\mathbb{R}}
\renewcommand{\P}{\mathbb{P}}
\newcommand{\ep}{\varepsilon}
\newcommand{\id}{1}
\newcommand{\trace}{\operatorname{trace}}
\newcommand{\dist}{\operatorname{dist}}
\begin{document}

\title[Anderson localization]{Localization near the edge for the Anderson Bernoulli model on the two dimensional lattice}

\author{Jian Ding}
\address{University of Pennsylvania}
\email{dingjian@wharton.upenn.edu}

\author{Charles K Smart}
\address{University of Chicago}
\email{smart@math.uchicago.edu}

\begin{abstract}
We consider a Hamiltonian given by the Laplacian plus a Bernoulli potential on the two dimensional lattice.  We prove that, for energies sufficiently close to the edge of the spectrum, the resolvent on a large square is likely to decay exponentially.  This implies almost sure Anderson localization for energies sufficiently close to the edge of the spectrum.  Our proof follows the program of Bourgain--Kenig, using a new unique continuation result inspired by a Liouville theorem of Buhovsky--Logunov--Malinnikova--Sodin.
\end{abstract}

\maketitle

\section{Introduction}

\subsection{Anderson localization}

We consider the Anderson--Bernoulli model on the lattice, which is the random Schr\"odinger operator on $\ell^2(\Z^d)$ given by
\begin{equation*}
H = - \Delta + \delta V,
\end{equation*}
where $(\Delta u)(x) = \sum_{|y-x| = 1} (u(y) - u(x))$ is the discrete Laplacian, $(V u)(x) = V_x u(x)$ is a random potential whose values $V_x \in \{ 0, 1 \}$ for $x \in \Z^d$ are independent and satisfy $\P[V_x = 0] = \P[V_x = 1] = 1/2$, and $\delta > 0$ is the strength of the noise.

We are interested in the effect of the perturbation $\delta V$ on the spectral theory of the discrete Laplacian.  We recall that the spectrum of the discrete Laplacian is the closed interval $\sigma(-\Delta) = [0,4d]$, as can be seen by taking the Fourier transform, $\widehat{-\Delta}(\xi) = 2 \sum_{1 \leq k \leq d} (1-\cos(\xi_k)).$  We recall that the spectrum of the random Hamiltonian is almost surely the closed interval $\sigma(H) = [0,4d+\delta]$, as can be seen by observing that, almost surely, every finite configuration appears in the random Bernoulli potential.  While the spectrum of the discrete Laplacian is absolutely continuous, the random Hamiltonian may have eigenvalues.  The perturbation $\delta V$ can create ``traps'' on which eigenfunctions are exponentially localized.  This phenomenon is called Anderson localization.

To be more precise, we say that $H$ has ``Anderson localization'' in the spectral interval $I \subseteq \sigma(H)$ if
\begin{equation*}
\psi : \Z^d \to \R, \quad
\lambda \in I, \quad
H \psi = \lambda \psi, \quad \mbox{and} \quad
\inf_{n > 0} \sup_{x \in \Z^d} (1 + |x|)^{-n} |\psi(x)| < \infty
\end{equation*} 
implies
\begin{equation*}
\inf_{t > 0} \sup_{x \in \Z^d} e^{t |x|} |\psi(x)| < \infty.
\end{equation*}
That is, $H$ has Anderson localization if every polynomially bounded solution of the eigenfunction equation is in fact an exponentially decaying eigenfunction.  Recall (see for example Kirsch \cite{Kirsch}*{Section 7}) that Anderson localization in $I$ implies that the spectrum of $H$ in $I$ is pure point.

We prove the following result.

\begin{theorem}
\label{t.anderson2d}
In dimension $d = 2$ there is an $\ep > 0$, depending on $\delta > 0$, such that, almost surely, $H$ has Anderson localization in $[0,\ep]$.
\end{theorem}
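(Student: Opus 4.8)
The plan is to run the Bourgain--Kenig multiscale scheme, whose output is a finite-volume resolvent estimate that then yields spectral localization by a now-standard argument. Write $\Lambda_L = [-L,L]^2 \cap \Z^2$ and let $H_{\Lambda_L}$ be the restriction of $H$ to $\Lambda_L$ with Dirichlet boundary conditions. The target is: there are $\ep = \ep(\delta) > 0$, $m > 0$, $\xi > 0$ such that for all large $L$,
\[
\P\Big[\text{for all } E \in [0,\ep]:\ \|(H_{\Lambda_L}-E)^{-1}\| \le e^{\sqrt L}\ \text{and}\ |(H_{\Lambda_L}-E)^{-1}(x,y)| \le e^{-m|x-y|}\ \text{when}\ |x-y| \ge \tfrac{L}{10}\Big] \ge 1 - L^{-\xi}.
\]
Granting this at a super-geometric sequence of scales $L_k = \lceil L_0^{\alpha^k}\rceil$ (with $\alpha \in (1,2)$), Anderson localization on $[0,\ep]$ follows by the usual route: by translation invariance and Borel--Cantelli, almost surely every sufficiently large dyadic annulus around every lattice site is ``good'', and then the resolvent identity forces any polynomially bounded solution of $H\psi = \lambda\psi$ with $\lambda \in [0,\ep]$ to decay exponentially --- this is the implication established, in the Bernoulli context, by Bourgain--Kenig and, for regular disorder, by von Dreifus--Klein. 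So the task reduces to the displayed estimate, proved by induction on $k$.

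The two substantive inputs are a Wegner estimate and, underlying it, a quantitative unique continuation principle. For the Wegner estimate, fix $\Lambda_L$ and an eigenpair $H_{\Lambda_L}\psi = \lambda\psi$ with $\|\psi\| = 1$. The eigenvalue $\lambda = \lambda(V)$ is monotone nondecreasing in each $\delta V_x$, and first-order perturbation theory together with monotonicity bounds its total variation, as the Bernoulli variables on a set $A$ of free sites are flipped, below by $\delta\sum_{x\in A}|\psi(x)|^2$ for the corresponding eigenfunctions. If one knows $\psi$ cannot concentrate --- that $\sum_{x\in A}|\psi(x)|^2 \ge c$ whenever $A$ is, say, half the sites of a suitable sub-box --- then, $\lambda$ being a monotone function on the Boolean cube $\{0,1\}^A$ whose range has length $\gtrsim c\delta$, the Sperner/Erd\H{o}s--Littlewood--Offord anti-concentration inequality gives $\P[\dist(E,\sigma(H_{\Lambda_L})) \le \eta] \lesssim \eta^{\kappa} L^{O(1)}$ (implied constants depending on $\delta$), a polynomial Wegner bound, which is exactly what must survive the multiscale counting. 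The non-concentration hypothesis is supplied by the new unique continuation theorem: a function $\psi$ with $\Delta\psi = (\delta V - \lambda)\psi$ on $\Lambda_L$ that is tiny on a fraction $1-\kappa_0$ of the sites of a sub-box is comparably small throughout a concentric sub-box of comparable size, with a loss only \emph{polynomial} in $L$. On $\Z^2$ strong unique continuation is false, so there is no Carleman-estimate proof; instead one adapts the Liouville theorem of Buhovsky--Logunov--Malinnikova--Sodin, propagating smallness through overlapping three-term configurations (the equation $\Delta\psi = (\delta V-\lambda)\psi$ expresses $\psi$ at a vertex through its four neighbors) along a route that alternates the two coordinate directions, arranged so that the multiplicative losses accumulate only polynomially over a path of length $O(L)$.

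With these in hand the induction runs in the familiar way. For the initial scale: near the edge $H_{\Lambda_{L_0}} = -\Delta_{\Lambda_{L_0}} + \delta V \ge -\Delta_{\Lambda_{L_0}} \ge 0$, and the bottom of $\sigma(H_{\Lambda_{L_0}})$ can lie below $2\ep$ only if $V$ vanishes on a ball of radius $\gtrsim \ep^{-1/2}$, which has probability at most $L_0^2\,2^{-c\ep^{-1}}$; choosing $L_0$ large and then $\ep = \ep(\delta)$ small makes this $\le L_0^{-\xi}$, and on the complement Combes--Thomas converts the spectral gap $\ge \ep$ above every $E\in[0,\ep]$ into the seed kernel bound at scale $L_0$ with $m_0 \asymp \ep^{1/2}$. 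For the step from $L_k$ to $L_{k+1}$: tile $\Lambda_{L_{k+1}}$ by boxes of scale $L_k$; a box fails its resolvent estimate with probability $\le L_k^{-\xi}$, so by independence the probability of two disjoint failures is $\le L_{k+1}^{-\xi'}$; the Wegner estimate removes the energies resonant with the surviving boxes; and off these events the geometric resolvent identity together with a Schur-type bound propagates the kernel estimate to scale $L_{k+1}$ with the mass degrading only by $m_{k+1} \ge m_k(1 - L_k^{-\tau})$, hence staying bounded below. This closes the induction and gives the displayed estimate.

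The main obstacle is the unique continuation theorem, and precisely the demand that its loss be \emph{polynomial} in the scale. Because the disorder is Bernoulli, the averaging-based Wegner proofs are unavailable and everything rests on eigenfunctions being quantitatively spread out; but the potential $\delta V$ is merely bounded, not small, so the Buhovsky--Logunov--Malinnikova--Sodin style propagation of smallness must be carried out across a macroscopic distance $O(L)$ without the loss compounding past $L^{O(1)}$. Designing the propagation route and controlling the constants efficiently enough --- which uses the planarity of $\Z^2$ in an essential way --- is the crux. Once that is in place, the anti-concentration step and the Bourgain--Kenig induction, though technically long, are essentially routine.
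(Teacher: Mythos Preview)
Your overall architecture is right --- reduce to a finite-volume resolvent estimate via multiscale analysis, with the Bernoulli Wegner step resting on unique continuation plus an anti-chain/Sperner-type bound --- and this is exactly the paper's route. But your quantitative expectations for the unique continuation step are off in a way that breaks the Wegner argument as you have written it, and the paper's actual proof has to work considerably harder to compensate.

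Specifically, you ask for a unique continuation principle with \emph{polynomial} loss in $L$ and conclude that an eigenfunction must carry a \emph{positive fraction} of its mass on any half of the sites. Neither is what the paper obtains, and neither appears to be achievable on $\Z^2$ by the Buhovsky--Logunov--Malinnikova--Sodin mechanism. The paper's unique continuation theorem (Theorem~\ref{t.introcontinuation}) says only that, with high probability in the potential, any solution of $H\psi=\lambda\psi$ on an $L$-box has $|\psi|\ge e^{-\alpha L\log L}\|\psi\|_\infty$ on at least $cL^{3/2}(\log L)^{-1/2}$ sites. The loss is exponential in $L\log L$, not polynomial, and the ``support'' is only $L^{3/2-o(1)}$ points out of $L^2$ --- a vanishing fraction. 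Moreover the statement is genuinely probabilistic: the propagation-of-smallness along thin tilted rectangles uses the randomness of $V$ (an $\ep$-net plus Azuma argument, Lemma~\ref{l.nice}), so the unique continuation event is itself part of the good event in the multiscale analysis.

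These weaker numbers mean the Wegner step cannot be closed with classical Sperner/Littlewood--Offord as you propose. Two further ingredients are needed. First, since only $\sim L^{3/2}$ free sites are ``sensitive'', the relevant anti-chain is only $\rho$-Sperner with $\rho\sim L^{3/2}/L^2=L^{-1/2}$, and the paper proves a generalized Sperner theorem (Theorem~\ref{t.sperner}) giving $|\mathcal A|\le 2^n n^{-1/2}\rho^{-1}$; this just barely yields a probability bound of order $L^{-1/2+\ep}$, which is why the paper's resolvent estimate has $\gamma<1/2$ rather than arbitrary $\xi$. Second, because the lower bound on $|\psi(x)|$ is only $e^{-\alpha L\log L}$, a single site flip moves the eigenvalue by far less than the target resolution, so one cannot argue by simple monotonicity; instead the paper identifies a spectral gap at some scale $s_\ell$ in a pigeonhole of $O(L^\delta)$ dyadic shells and uses a min--max variation lemma (Lemma~\ref{l.minmax}) to show the flip pushes an eigenvalue across that gap. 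Your sketch omits both of these, and without them the induction does not close.
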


\subsection{Background}

To put \tref{anderson2d} in context, let us very briefly discuss some of the known results and open problems for the Anderson--Bernoulli model.  We refer the reader who is interested in more background material to Aizenman--Warzel \cite{Aizenman-Warzel}, Hundertmark \cite{Hundertmark}, Jitomirskaya \cite{Jitomirskaya}, Kirsch \cite{Kirsch}, and Stolz \cite{Stolz}.

In the case the noise is continuous, that is, when we replace the Bernoulli random variables $V_x$ with some other random variables $V_x$ which are independent and share a compactly supported and bounded density, the following are true:

\begin{itemize}
\item If $d = 1$, then $H$ almost surely has Anderson localization in all of $\sigma(H)$.
\item If $d \geq 2$, then $H$ almost surely has Anderson localization in $[0,\ep]$.
\item If $d \geq 2$ and $\delta \geq C$ is large, then $H$ almost surely has Anderson localization in all of $\sigma(H)$.
\end{itemize}
The above results can be proved using the spectral averaging method of Kunz--Souillard \cite{Kunz-Souillard}, the multiscale method of Fr\"ohlich--Spencer \cite{Frohlich-Spencer}, or the fractional moment method of Aizenman--Molchanov \cite{Aizenman-Molchanov}.  The historical development is fairly complicated, and we refer the interested reader to Kirsch \cite{Kirsch}*{Section 8 Notes} for an account and references.

The above three results do not pin down what happens when $d \geq 2$ and $\delta > 0$ is small.  This unknown regime is the subject of the two most important open problems in the area; see Simon \cite{Simon}.

\begin{problem}
In dimension $d = 2$, prove that $H$ almost surely has Anderson localization in all of $\sigma(H)$. 
\end{problem}

\begin{problem}
In dimensions $d \geq 3$, prove that, for every small $\ep > 0$, there is a $\delta > 0$ such that $H$ almost surely has no eigenvalues in $[\ep,4d-\ep]$.
\end{problem}

In addition to the above two conjectures, it is expected that localization effects are quite robust to changes in the structure of the noise.  That is, Anderson localization is expected to be a universal phenomenon.  However, when the law of $V_x$ does not have a bounded density, such as in the Bernoulli case, much less is known.  In dimension $d = 1$, Carmona--Klein--Martinelli \cite{Carmona-Klein-Martinelli} proved almost sure Anderson localization in all of $\sigma(H)$ for any non-trivial i.i.d potential.  In dimension $d \geq 2$, when the law of $V_x$ is a ``sufficiently nice'' approximation of a uniform random variable on $[0,1]$, Imbrie \cite{Imbrie} proved almost sure Anderson localization in $[0,\ep]$.  If one works on the continuum $\R^d$ instead of the lattice $\Z^d$, then Bourgain--Kenig \cite{Bourgain-Kenig} prove almost sure Anderson localization in $[0,\ep]$.  Our paper follows these previous works, giving more rigorous evidence of the universality of Anderson localization.

\subsection{Resolvent estimate}

We do not prove \tref{anderson2d} directly.  Instead, we rely on previous work to reduce the problem to proving bounds on the exponential decay of the resolvent.  Our main theorem is the following.

\begin{theorem}
\label{t.main}
Suppose $d = 2$ and $\delta = 1$.  For any $1/2 > \gamma > 0$, there are $\alpha > 1 > \ep > 0$ such that, for every energy $\bar \lambda \in [0,\ep]$ and square $Q \subseteq \Z^2$ of side length $L \geq \alpha$,
\begin{equation*}
\P[|(H_Q - \bar \lambda)^{-1}(x,y)| \leq e^{L^{1-\ep} - \ep |x - y|} \mbox{ for } x, y \in Q] \geq 1 - L^{-\gamma}.
\end{equation*}
\end{theorem}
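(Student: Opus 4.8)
The plan is to follow the multiscale analysis in the Bourgain--Kenig framework, which reduces the resolvent estimate to two ingredients at the induction scale: a Wegner-type estimate controlling the density of eigenvalues near $\bar\lambda$, and a deterministic mechanism — unique continuation / quantitative eigenvalue repulsion — that lets one pass from the hypothesis ``the potential looks generic on most sub-boxes'' to actual exponential decay of the resolvent. The bootstrap runs over a sequence of scales $L_{k+1} \approx L_k^{\rho}$ for a suitable $\rho \in (1,2)$; at each scale one assumes the estimate at scale $L_k$ with a probabilistic loss of the form $L_k^{-\gamma}$ and upgrades it to scale $L_{k+1}$ with the same form of loss, which forces the admissible exponents $\alpha$ (aspect of scales) and $\ep$ (rate of decay and energy window) to be chosen in that order once $\gamma$ is fixed.

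The key steps, in order, are as follows. First, establish an initial-scale estimate: at energies near $0$ the operator $H_Q - \bar\lambda$ is, with high probability, positive and bounded below on a box of size $\alpha$, because the Dirichlet Laplacian on such a box has smallest eigenvalue of order $L^{-2}$ and the potential is nonnegative; a short large-deviations argument gives the base case. Second, prove the Wegner estimate: because the potential is Bernoulli-valued, the classical Wegner argument via $\partial_{V_x}$ is unavailable, so here one must use the new unique continuation result advertised in the abstract — the point is that if an eigenfunction at energy near the edge were to be small on a positive fraction of the box, unique continuation forces it to be globally small, contradicting normalization; quantifying this and combining with the independence of the $V_x$ over the box yields an eigenvalue-counting bound of the shape $\P[\dist(\sigma(H_Q),\bar\lambda) \le \eta] \lesssim \eta^c L^{C}$. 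Third, carry out the resolvent expansion: assuming the scale-$L_k$ estimate on a grid of sub-boxes tiling the scale-$L_{k+1}$ box, one uses the resolvent identity to propagate decay, with the Wegner estimate ruling out the bad event that $\bar\lambda$ lies too close to the spectrum of any sub-box, and a counting/percolation argument (as in Bourgain--Kenig) ensuring that the ``singular'' sub-boxes are sparse enough that a chain of good boxes connects any two points $x,y \in Q$. Fourth, bookkeep the constants: track the accumulated error $e^{L^{1-\ep}}$ (the subexponential prefactor absorbs the errors from finitely many boundary terms and the mild growth in the Wegner bound across scales) and verify the probability loss telescopes to $L^{-\gamma}$.

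I expect the main obstacle to be the Wegner estimate — equivalently, the unique continuation step — since this is exactly where the Bernoulli nature of the potential bites and where the standard continuum or bounded-density techniques fail. The novelty must be a discrete unique continuation statement: a solution of $(H - \lambda)\psi = 0$ on a box that vanishes (or is small) on a suitable positive-density subset must vanish (or be small) everywhere, with quantitative control. On $\Z^2$ this is delicate because genuine unique continuation fails for the discrete Laplacian in general; the restriction to energies near the edge $\lambda \approx 0$ and the Liouville-type input of Buhovsky--Logunov--Malinnikova--Sodin is presumably what makes a usable substitute available. Once that deterministic input is in hand, combining it with the independence of the potential on disjoint sub-boxes to convert ``small on a positive-density set'' into a genuine probabilistic gain, and then feeding this into the Bourgain--Kenig multiscale machinery, should be comparatively routine, modulo careful tracking of the lattice geometry and the exponents.
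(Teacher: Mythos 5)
Your outline correctly identifies the high-level framework -- Bourgain--Kenig multiscale analysis with a unique continuation input replacing the missing Wegner estimate, and you rightly flag the Wegner/unique continuation step as the crux -- but the sketch has two genuine gaps that are exactly where the paper's new ideas live.

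First, the base case as you state it does not work. You propose using that the Dirichlet Laplacian on a box of side $L$ has principal eigenvalue of order $L^{-2}$ together with nonnegativity of the potential. This gives a resolvent bound, but the Combes--Thomas rate of decay you extract from a gap of size $L^{-2}$ degrades like $L^{-1}$, which is not compatible with the multiscale induction (one needs a decay rate of order $L^{-\delta}$ with $\delta$ small, essentially independent of the scale across the initial range $L_0,\dots,L_M$). The paper instead uses that, with overwhelming probability, the set $\{V=1\}$ is an $R$-net with $R\sim L^{\delta/3}$, and builds an explicit barrier from the lattice Green's function to push the principal eigenvalue up to order $R^{-2}(\log R)^{-1}$ and simultaneously get exponential off-diagonal decay at rate $\sim R^{-2}(\log R)^{-1}$; see \lref{principal}. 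The randomness of the potential is essential already at the base scale.

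Second, and more seriously, your description of the Wegner step is the naive one, and it fails quantitatively. You write that unique continuation says ``if an eigenfunction is small on a positive fraction of the box then it is globally small,'' and you aim for an estimate $\P[\dist(\sigma(H_Q),\bar\lambda)\le\eta]\lesssim \eta^c L^C$. Neither is what the paper proves, and the distinction matters. The actual unique continuation (\tref{continuation}) only guarantees that an eigenfunction is large (up to a factor $e^{-\alpha L\log L}$) on roughly $L^{3/2}(\log L)^{-1/2}$ sites out of $L^2$, i.e.\ a \emph{vanishing} fraction $\sim L^{-1/2}$ of the box. This is too weak for the classical Sperner argument used in Bourgain--Kenig: the event $\{\|R_Q\|>e^{L_1}\}$ is only a ``$\rho$-Sperner family'' with $\rho\sim L^{-1/2}$, not a genuine antichain. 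The paper has to prove a generalization of Sperner's theorem (\tref{sperner}) with bound $2^n n^{-1/2}\rho^{-1}$ to cope with this, and the extra $\rho^{-1}\sim L^{1/2}$ factor is exactly why the final probability loss is $L^{-1/2+o(1)}$ rather than something polynomial in $\eta$ -- which in turn is why the theorem can only achieve $\gamma<1/2$. Moreover, the eigenvalue variation requires handling eigenvalue \emph{interlacements}: flipping a site can push an eigenvalue past its neighbors, so one must first exhibit a spectral gap (via \lref{minmax} and the almost-orthogonality bound \lref{kl}) before the Sperner count can be applied. Your proposal contains none of this, and it is precisely the content that makes the Bernoulli case hard. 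You also omit the frozen/free sites bookkeeping, which is not cosmetic: the unique continuation statement must hold conditionally on the frozen configuration and only over the free sites, and the $\ep$-regularity condition on the frozen set is what makes this possible across scales.

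In short: the architecture you describe is the right one and matches the paper, but the two steps you gloss over -- the barrier-based base case and the Sperner/interlacement Wegner estimate -- are where the proof would break if carried out as you wrote it.
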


Here $H_Q : \ell^2(Q) \to \ell^2(Q)$ denotes the restriction of the Hamiltonian $H$ to the square $Q$ with zero boundary conditions.  

Our proof works, essentially verbatim, for any random potential $V : \Z^2 \to \R$ whose values $V(x)$ are i.i.d., bounded, and non-trivial.  However, for simplicity, we argue only in the case of strength $\delta = 1$ and $\frac12$-Bernoulli noise.

\begin{proof}[Proof of \tref{anderson2d}]
Almost sure Anderson localization for $H$ in the interval $[0,\ep]$ follows from \tref{main} using the Peierls argument of Bourgain--Kenig \cite{Bourgain-Kenig}*{Section 7}.  See Germinet--Klein \cite{Germinet-Klein}*{Sections 6 and 7} for an axiomatic version of this.
\end{proof}

\subsection{Outline}

At a high level, our proof follows Bourgain--Kenig \cite{Bourgain-Kenig}.  We perform a multiscale analysis, keeping track of a list of ``frozen'' sites $F \subseteq \Z^2$ where the potential has already been sampled.  The complementary ``free'' sites $\Z^2 \setminus F$ are sampled only to perform an eigenfunction variation on rare, ``bad'' squares.  This strategy of frozen and free sites is used to obtain a version of the Wegner \cite{Wegner} estimate that is otherwise unavailable in the Bernoulli setting.

The eigenvalue variation of Bourgain--Kenig \cite{Bourgain-Kenig} relies crucially on an a priori quantitative unique continuation result.  Namely, for every $\alpha > 1$ there is a $\beta > 1$ such that, if $u \in C^2(B_R)$ satisfies $$|\Delta u| \leq \alpha |u| \leq \alpha^2 |u(0)| \quad \mbox{in } B_R,$$ then $$\max_{y \in B_1(x)} |u(y)| \geq e^{-\beta R^{4/3} \log R} |u(0)| \quad \mbox{for } B_1(x) \subseteq B_{R/2}.$$ The corresponding fact (even in qualitative form) is false on the lattice $\Z^d$; see Jitomirskaya \cite{Jitomirskaya}*{Theorem 2}.

To carry out the program on the lattice, we need a substitute for the above quantitive unique continuation result.  For the two-dimensional lattice $\Z^2$, a hint of the missing ingredient appears in the paper of Buhovsky--Logunov--Malinnikova--Sodin \cite{Buhovsky-Logunov-Malinnikova-Sodin}.  In this paper, it is proved that any function $u : \Z^2 \to \R$ that is harmonic and bounded on a $1-\ep$ fraction of sites must be constant.  One of the key components of this Liouville theorem is the following quantitative unique continuation result for harmonic functions on the two dimensional lattice.

\begin{theorem}[\cite{Buhovsky-Logunov-Malinnikova-Sodin}]
There are constants $\alpha > 1 > \ep > 0$ such that, if $u : \Z^2 \to \R$ is lattice harmonic in a square $Q \subseteq \Z^2$ of side length $L \geq \alpha$, then
\begin{equation*}
| \{ x \in Q : |u(x)| \geq e^{-\alpha L} \| u \|_{\ell^\infty(\frac12 Q)} \} | \geq \ep L^2.
\end{equation*}
\end{theorem}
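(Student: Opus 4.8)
The plan is to prove the contrapositive in a quantitative form: if a lattice harmonic function $u$ on a square $Q$ of side length $L$ has $|u(x)| < e^{-\alpha L} \|u\|_{\ell^\infty(\frac12 Q)}$ on all but $\ep L^2$ sites of $Q$, then something is violated. First I would normalize so that $\|u\|_{\ell^\infty(\frac12 Q)} = 1$ and fix a site $x_0 \in \frac12 Q$ with $|u(x_0)| \geq \frac12$. The heart of the matter is a \emph{propagation of smallness} mechanism: the lattice Laplace equation $u(x) = \frac14 \sum_{|y-x|=1} u(y)$ means the value at any site is an average of its four neighbors, so if $u$ is extremely small on a set $S$ of density close to $1$, one wants to show $u$ cannot suddenly become of order $1$ nearby. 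The obstruction to a naive argument is exactly the failure of lattice unique continuation (Jitomirskaya's example), so the set $S$ of small sites must be chosen adversarially; the point of allowing an $\ep L^2$ \emph{exceptional} set, rather than demanding smallness everywhere, is what makes a robust statement possible.

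The key step is a combinatorial/geometric one, in the spirit of \cite{Buhovsky-Logunov-Malinnikova-Sodin}: I would look at the restriction of $u$ to horizontal lines (or $45^\circ$ diagonal lines, which interact more rigidly with the lattice Laplacian). Along a single line, the harmonicity relation couples $u$ on three consecutive lines; if on a positive fraction of lines the function is pointwise tiny on a $1 - \ep'$ fraction of the line, one can run a one-dimensional three-term-recurrence estimate to propagate the smallness across the line. Concretely, from $u(x) = \frac14\sum_{|y-x|=1} u(y)$ one gets that on a horizontal segment where $u$ and its vertical neighbors are known to be of size $\eta$ except at $k$ bad sites, the second horizontal difference of $u$ is $O(\eta)$ off the bad sites, so $u$ is nearly affine on long runs; pinning it down at the two ends (where it is $\le \eta$) forces $u = O(L\eta)$ on the whole run. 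Chaining this from the boundary of $Q$ inward, and using that the exceptional set has size only $\ep L^2$ so that a positive fraction of lines meet it in few points, I would propagate $|u| \lesssim e^{cL}\eta$ into a neighborhood of $x_0$, where $\eta = e^{-\alpha L}$. Choosing $\alpha$ large compared to $c$ then yields $|u(x_0)| < \frac12$, a contradiction.

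I expect the main obstacle to be making the propagation \emph{two-dimensional} rather than line-by-line: a single line of small values, by itself, carries essentially no information about neighboring lines because the recurrence in the transverse direction is unstable (this is precisely Jitomirskaya's obstruction). The resolution, following \cite{Buhovsky-Logunov-Malinnikova-Sodin}, should be to work with the function along diagonal lattice directions and to use that \emph{two} transverse families of lines (the two diagonals) cannot simultaneously be arranged to avoid propagation, together with a counting argument showing that the $\ep L^2$ bad sites cannot block a positive-density collection of ``good'' line pairs. So the real work is: (i) a clean one-dimensional lemma quantifying how far smallness on a high-density subset of a discrete interval, for a function with small discrete second difference, forces smallness on the whole interval; (ii) a combinatorial lemma that, given any exceptional set of size $\ep L^2$ with $\ep$ small, produces a connected network of good lines reaching $\frac12 Q$ from $\partial Q$; and (iii) bookkeeping of the exponential loss $e^{cL}$ accumulated along the network, to be beaten by the gap between $e^{-\alpha L}$ and the order-$1$ lower bound at $x_0$. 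The constants $\alpha$ large and $\ep$ small will be chosen last, once $c$ is known.
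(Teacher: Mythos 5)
Your proposal captures the right high-level shape---propagation of smallness along discrete lines, with a combinatorial component to handle the exceptional set---but it misses the specific algebraic mechanism that makes the BLMS propagation work, and the substitute you suggest does not close. Your claim that ``the second horizontal difference of $u$ is $O(\eta)$ off the bad sites, so $u$ is nearly affine on long runs'' requires control of $u$ on three consecutive lines, which is exactly the transverse instability you later identify as the main obstacle; you have not resolved that circularity. More importantly, even if one could run it, an affine (degree-one) approximation is far too weak. The actual BLMS ingredient is that, in tilted coordinates $(s,t)=(x+y,x-y)$, a lattice harmonic function vanishing on the first two rows $t=1,2$ of a tilted rectangle has the property that $(-1)^{(s+t)/2}u_{s,t}$ is a \emph{polynomial of degree $t-2$ in $s$}; propagation of smallness from a $1-\ep$ fraction of a diagonal to the whole diagonal is then accomplished by the Remez inequality for polynomials of that (growing, but controlled) degree, not by a second-difference argument. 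This polynomial structure is the irreplaceable lemma (discussed in Section 3 of this paper, where it is also observed that adding a potential destroys the polynomial structure, which is precisely why the present authors had to replace this deterministic step by a probabilistic one in Lemma \ref{l.nice}).

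A second issue is the starting point of your propagation. You propose to chain smallness ``from the boundary of $Q$ inward,'' but the hypothesis gives no control of $u$ near $\partial Q$---it only bounds $u$ on a $1-\ep$ fraction of $Q$. The BLMS proof instead uses that in tilted coordinates the values on a tilted rectangle $R_{[1,a],[1,b]}$ are determined, with growth $e^{Cb\log a}$, by the values on the west boundary $\partial^w R$, and then combines the polynomial/Remez lemma with a growth lemma and a Calder\'on--Zygmund stopping-time argument over dyadic tilted squares (BLMS Proposition 3.9, the analogue of the covering argument proving Theorem \ref{t.continuation} here). Without both the polynomial/Remez mechanism and that covering framework, the bookkeeping in your step (iii) cannot be carried out, and your items (i) and (ii) remain heuristic rather than provable.
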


This implies that any two lattice harmonic functions that agree on a $1-\ep$ fraction of sites in a large square must be equal in the concentric half square.  Note that this result is false in dimensions three and higher.

Inspired by this theorem and its proof, we prove the following random quantitative unique continuation result for eigenvalues of the Hamiltonian $H$.

\begin{theorem}
\label{t.introcontinuation}
There are constants $\alpha > 1 > \ep > 0$ such that, if $\bar \lambda \in [0,9]$ is an energy and $Q \subseteq \Z^2$ is a square of side length $L \geq \alpha$, then $\P[\mathcal E] \geq 1 - e^{-\ep L^{1/4}}$, where $\mathcal E$ denotes the event that
\begin{equation*}
| \{ x \in Q : |\psi(x)| \geq e^{-\alpha L \log L} \| \psi \|_{\ell^\infty(\frac12 Q)} \} | \geq \ep L^{3/2}(\log L)^{-1/2}
\end{equation*}
holds whenever $\lambda \in \R$, $\psi : \Z^2 \to \R$, $|\lambda - \bar \lambda| \leq e^{-\alpha (L \log L)^{1/2}}$, and $H \psi = \lambda \psi$ in $Q$.
\end{theorem}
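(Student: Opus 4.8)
The plan is to run a propagation-of-smallness argument modeled on the proof of the quoted Buhovsky--Logunov--Malinnikova--Sodin theorem, replacing the special algebraic structure of the Laplacian that makes it work for harmonic functions by a probabilistic input that excludes, with overwhelming probability, the potential configurations on which the propagation fails. Normalize $\| \psi \|_{\ell^\infty(\frac12 Q)} = 1$ and fix $x_0 \in \frac12 Q$ with $|\psi(x_0)| = 1$. Writing the eigen-equation columnwise, $\Psi_{i+1} = (4 + \lambda - V_i - T) \Psi_i - \Psi_{i-1}$, where $\Psi_i$ is the $i$-th column of $\psi$ in $Q$, $V_i$ the diagonal of the potential there and $T$ the vertical adjacency, exhibits $\psi$ on any column as an explicit linear function (of operator norm $e^{O(L)}$ per step, since $\bar\lambda \le 9$) of two consecutive columns; the same holds with rows, and in each coordinate direction, and it localizes: if $\psi$ is $\le \eta$ on two consecutive length-$m$ segments of adjacent rows, then $\psi$ is $\le e^{Cm}\eta$ on the triangular ``light cone'' they span, losing one site per step. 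For the bare Laplacian the BLMS argument shows that in two dimensions this cone can always be \emph{routed around} any obstacle, so smallness on a $1-\ep$ fraction forces smallness everywhere; this uses the precise coefficients of $\Delta$, and for a general potential it is false (Jitomirskaya's example produces eigenfunctions that are tiny on a whole square and large just outside). Restoring the routing for a random potential is the point of the proof.

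The theorem reduces to a single probabilistic quantitative unique continuation statement: on an event $\mathcal E_Q$, measurable with respect to $V$ near $Q$, with $\P[\mathcal E_Q] \ge 1 - e^{-cL^{1/4}}$, every $\psi$ with $H\psi = \lambda\psi$ in $Q$ and $|\lambda - \bar\lambda| \le e^{-\alpha(L\log L)^{1/2}}$ that satisfies $|\psi| \le \eta$ off at most $\ep L^{3/2}(\log L)^{-1/2}$ sites of $Q$ must satisfy $|\psi| \le e^{\alpha L \log L/2}\eta$ on $\frac12 Q$. Granting this, suppose $|\{ x \in Q : |\psi(x)| \ge e^{-\alpha L\log L}\}| < \ep L^{3/2}(\log L)^{-1/2}$; taking $\eta = e^{-\alpha L\log L}$ in the statement gives $1 = |\psi(x_0)| \le e^{-\alpha L\log L/2} < 1$, a contradiction, so on $\mathcal E_Q$ the desired lower bound on the large set holds for all such $\psi$; and $1 - e^{-cL^{1/4}} \ge 1 - e^{-\ep L^{1/4}}$. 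Two features of this reduction deserve comment. First, the number of defects tolerated is $\ep L^{3/2}(\log L)^{-1/2}$ rather than the BLMS value $\ep L^2$: it is precisely what the routing can absorb, and is the source of the exponent in the theorem. Second, the eigenvalue window is matched to the routing, not merely absorbed by a crude error estimate: the routing proceeds in ``legs'' of length $\asymp (L\log L)^{1/2}$, and along each leg an inhomogeneity of size $|\lambda - \bar\lambda|\,|\psi| = e^{-\alpha(L\log L)^{1/2}}|\psi|$ is amplified by only $e^{O((L\log L)^{1/2})}$, hence stays negligible compared with the propagated bound; the cumulative amplification over all the legs used to reach $x_0$ is what forces the threshold $e^{-\alpha L\log L}$.

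Everything therefore rests on establishing $\mathcal E_Q$ with the claimed probability, and this is the new ingredient and the step I expect to be hardest. The plan is to rework the BLMS continuation quantitatively, tracking every estimate with errors and keeping the total amplification at $e^{O(L\log L)}$, and at the points where BLMS invokes the rigidity of $\Delta$ to get past an obstacle, to reveal the nearby potential values layer by layer along the direction of propagation: one shows that for the routing to be blocked the value revealed at each of $\gtrsim L^{1/4}$ essentially independent stages must be ``resonant'', that is, must fall into a set of bounded size determined by the previously revealed data and $\bar\lambda$, and since the $V(x)$ are i.i.d. and non-degenerate each such stage costs an independent factor bounded away from $1$, yielding $\P[\mathcal E_Q^c] \le e^{-cL^{1/4}}$. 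The main obstacle is making ``routed around the obstacle'' and ``resonant'' precise: one must propagate a quantitative lower bound around the bad set without losing more than $e^{O(L\log L)}$, identify exactly which finitely many potential values are forced to be resonant, and verify that the corresponding constraints are genuinely independent across the $\asymp L^{1/4}$ stages — which is where the power $L^{1/4}$ (the width over which the routing scheme decouples from a single revealed layer) enters. The exponents $\ep L^{3/2}(\log L)^{-1/2}$, $e^{-\alpha L\log L}$, $e^{-\alpha(L\log L)^{1/2}}$ and $e^{-\ep L^{1/4}}$ are the quantitative outputs of this analysis.
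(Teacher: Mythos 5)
Your high-level picture is right---propagate smallness BLMS-style, replacing the Remez/polynomial rigidity (which the potential destroys) by a probabilistic input---and your bookkeeping of the thresholds $e^{-\alpha L\log L}$, $e^{-\alpha(L\log L)^{1/2}}$, and $\ep L^{3/2}(\log L)^{-1/2}$ is close to the paper's. But the entire content of the theorem lives in the event $\mathcal E_Q$, for which you give only a plan, and the plan does not actually say how the randomness enters: ``routing around the obstacle'' and ``resonant'' are never made precise, and the independence of your $\gtrsim L^{1/4}$ ``stages'' is asserted without any combinatorial structure that would support it.

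The paper's mechanism is specific and not the one you describe. In tilted coordinates it considers very thin tilted rectangles $R_{[1,a],[1,b]}$ of steep aspect ratio $a \gtrsim b^2\log a$, and views the eigen-equation as a random linear map from the $O(b)$ west-boundary values $\psi^0$ to the $O(a)$ far-side values $\psi^1$. For a fixed $\psi^0$, a single privileged diagonal on which $\psi$ is deterministically bounded below makes each of the $\sim a$ far-side values $\tfrac12$-sensitive, through an alternating-sum identity, to one fresh Bernoulli coordinate on that diagonal; Azuma's inequality then gives failure probability $e^{-ca}$ for that fixed input. An $\ep$-net over the $O(b)$-dimensional inputs, of cardinality $e^{O(b^2\log a)}$, is beaten by $e^{-ca}$ precisely because $a\gtrsim b^2\log a$; this aspect ratio is what forces the rectangles to be thin and what produces the $L^{3/2}(\log L)^{-1/2}$ sparsity (one cannot reach $L^{2-\ep}$ this way). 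A growth lemma for tilted squares and a Calder\'on--Zygmund stopping-time covering then finish. Your account of the power $L^{1/4}$ is also wrong: it is the \emph{minimum scale} of tilted square admitted into the stopping-time covering; each square of side $\ell$ is good with probability $1 - e^{-c\ell}$ (much better than $e^{-cL^{1/4}}$ for $\ell$ close to $L$), and a union bound over the $\mathrm{poly}(L)$ squares with $\ell \geq L^{1/4}$ gives $1 - e^{-cL^{1/4} + C\log L}$. It is not ``$L^{1/4}$ essentially independent resonance stages.'' Without the thin-rectangle/$\ep$-net/Azuma lemma and the stopping-time scaffolding, your plan has no way to produce $\mathcal E_Q$.
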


This is one of three main contributions of our work.  Roughly speaking, this result says that, with high probability, every eigenfunction on a  square $Q$ with side length $L$ is supported on at least $L^{3/2-\ep}$ many points in $Q$. We in fact prove something slightly stronger, as our unique continuation result needs to be adapted to the ``frozen'' and ``free'' sites formalism.  See \tref{continuation} below.

In analogy with the Wegner estimate for continuous noise, we expect that, with probability $1 - e^{-L^{1-\ep}}$, there are no $\psi : \Z^d \to \R$ satisfying $H \psi = \lambda \psi$ in $Q$, $|\lambda - \bar \lambda| < e^{-L^{1-\ep}}$, and $\psi = 0$ on $\Z^d \setminus Q$.  That is, we expect the above unique continuation theorem to be vacuous in the case of Dirichlet data.  Of course, there is (as of our writing) no such Wegner estimate available in the Bernoulli case.  Moreover, we apply this result below for $\psi$ with non-zero boundary data.  Still, it is worth keeping in mind that our unique continuation theorem is quite weak, and barely suffices for our application to Anderson localization.

The sparsity of support in our unique continuation theorem forces us to make two significant modifications to Bourgain--Kenig \cite{Bourgain-Kenig} program.  First, since the classical Sperner's lemma no longer suffices for eigenvalue variation, we prove a generalization of Sperner's theorem; see \tref{sperner} below.  Second, since we have to deal with eigenvalue interlacements during the eigenvalue variation, we are forced to identify and exploit gaps in the spectrum; see \lref{minmax} and \lref{wegner} below.  These two modifications are the other two main contributions of our work.

All of the essentially new ideas in this article are presented in the third, fourth, and fifth sections.  The remaining three sections consist of relatively straightforward modifications of the ideas in Bourgain--Kenig \cite{Bourgain-Kenig}.  However, we did invest some effort in reorganizing these latter arguments.  Our proofs of the multiscale resolvent estimate \lref{multiscale} and the base case \lref{principal} appear to be new.

Recently, our modifications to the multiscale analysis were used to prove the analogue of \tref{main} in dimension $d = 3$.  Li--Zhang \cite{Li-Zhang} proved a deterministic unique continuation result that is a sufficient substitute for \tref{introcontinuation}.  We find it quite interesting that deterministic unique continuation suffices in dimension $d = 3$ while dimension $d = 2$ appears to require the use of the randomness.

\subsection*{Acknowledgments}

We thank Carlos Kenig for making us aware of the paper of Buhovsky--Logunov--Malinnikova--Sodin.  We thank Adrian Dietlein, Eugenia Malinnikova, and the anonymous referees for finding mistakes in earlier versions of the manuscript.  The first author was partially supported by the NSF award DMS-1757479 and a Sloan Foundation fellowship.  The second author was partially supported by the NSF award DMS-1712841.

\section{Preliminaries}

\subsection{Spectrum}

As described above, it is a standard fact that the spectrum of $H$ is almost surely the interval $[0,9]$.  Henceforth, we only concern ourselves with energies in this interval.  In particular, $\lambda$ always denotes a real number in the interval $[0,9]$.  Moreover, we fix a target energy
\begin{equation*}
\bar \lambda \in [0,9]
\end{equation*}
throughout the article.

\subsection{Continuous variables}

While we are proving a theorem about Bernoulli potentials $V : \Z^2 \to \{ 0, 1 \}$, it is useful to allow the potential to take values in the real interval $[0,1]$.  In particular, we take our probability space to be the set $V : \Z^2 \to [0,1]$ equipped with the usual Borel sets and a probability measure given by the product of the $\frac12$-Bernoulli measure.  This formalism is borrowed from Bourgain--Kenig \cite{Bourgain-Kenig} and is used in exactly one place: to control the number of eigenvalues during the proof of the Wegner estimate in \lref{wegner}.  One way to think about this is to work instead with a non-trivial convex combination of the Bernoulli and uniform potential and prove estimates that are independent of the choice of weights.

\subsection{Squares}

Unless otherwise specified, the letter $Q$ denotes a dyadic square in $\Z^2$.  That is, a set
\begin{equation*}
Q = x + [0,2^n)^2 \cap \Z^2 \quad \mbox{with } x \in \Z^2.
\end{equation*}
The side length and area of $Q$ are $\ell(Q) = 2^n$ and $|Q| = \ell(Q)^2 = 2^{2n}$.  The notations $\tfrac12 Q$ and $2 Q$ denote the concentric halving and doubling of $Q$, respectively.

\subsection{Restrictions to finite sets}

We frequently consider the restriction $H_Q = \id_Q H \id_Q$ of the Hamiltonian $H$ to squares $Q \subseteq \Z^2$.  We use the notation $R_Q = (H_Q - \bar \lambda)^{-1}$ to indicate (when it exists) the unique operator on $\ell(\Z^2)$ such that $R_Q = \id_Q R_Q \id_Q$ and $R_Q (H_Q - \bar \lambda) = (H_Q - \bar \lambda) R_Q = \id_Q$.  Abusing notation, we sometimes think of $H_Q$ and $R_Q$ as elements of the space $S^2(\R^Q)$ of symmetric bilinear forms on $\R^Q$.  Similarly, we sometimes think of the restriction $V_Q$ as an element of the vector space $\R^Q$.

\subsection{Notation}

We use Hardy notation for constants, letting $C > 1 > c > 0$ denote universal constants that may differ in each instance.  We use subscipts to denote additional dependencies, so that $C_\ep$ is allowed to depend on $\ep$.

We use $\| H_Q \|$ and $\| H_Q \|_2$ to denote the operator and Hilbert--Schmidt norms of $H_Q \in S^2(\R^Q)$.  For functions $\psi \in \R^Q$, we make frequent use of the bounds $\| \psi \|_{\ell^\infty(Q)} \leq \| \psi \|_{\ell^2(Q)} \leq |Q|^{1/2} \| \psi \|_{\ell^\infty(Q)}$ to absorb differences of norms into exponential prefactors.

When $\psi : Q \to \R$ and $t \in \R$, we use $\{ |\psi| \geq t \}$ as shorthand for the set $\{ x \in Q : |\psi(x)| \geq t \}$.

We occasionally use notation from functional calculus.  In particular, if $A \in S^2(\R^n)$ and $I \subseteq \R$, then $\trace \id_I(A)$ is the number of eigenvalues of $A$ in the interval $I$.

\section{Unique continuation with a random potential}

\subsection{Statement}

We prove a quantitative unique continuation result for eigenfunctions of $H$.  Our argument generalizes the unique continuation result of  \cite{Buhovsky-Logunov-Malinnikova-Sodin} for harmonic functions on $\Z^2$.  The basic idea is that, with high probability, every eigenfunction in the square $Q$ is supported on $\ell(Q)^{3/2-\ep}$ many sites.  The precise statement is made more complicated by the presence of frozen sites, which we need in our application to Anderson localization.

In order to state our result precisely, we need to define the $45^\circ$ rotations of rectangles and lines.

\begin{definition}
A tilted rectangle is a set $$R_{I,J} = \{ (x,y) \in \Z^2 : x+y \in I \mbox{ and } x-y \in J \},$$ where $I, J \subseteq \Z$ are intervals.  A tilted square $Q$ is a tilted rectangle $R_{I,J}$ with side length $\ell(Q) = |I| = |J|$.
\end{definition}

\begin{definition}
Given $k \in \Z$, define the diagonals $$D^\pm_k = \{ (x,y) \in \Z^2 : x \pm y = k \}.$$  
\end{definition}

We need a notion of sparsity along diagonals.

\begin{definition}
Suppose $F \subseteq \Z^2$ a set, $\delta > 0$ a density, and $R$ a tilted rectangle.  Say that $F$ is $(\delta,\pm)$-sparse in $R$ if $$|D_k^\pm \cap F \cap R| \leq \delta |D_k^\pm \cap R| \quad \mbox{for all diagonals } D_k^\pm.$$  We say that $F$ is $\delta$-sparse in $R$ if it is both $(\delta,+)$-sparse and $(\delta,-)$-sparse in $R$.
\end{definition}

We need a notion of sparsity at all scales.

\begin{definition}
Say that $F$ is $\delta$-regular in the set $E \subseteq \Z^2$ if $\sum_k |Q_k| \leq \delta |E|$ holds whenever $F$ is not $\delta$-sparse in each of the disjoint tilted squares $Q_1, ..., Q_n \subseteq E$.
\end{definition}

We now state our unique continuation theorem.  This is the same as \tref{introcontinuation}, except that it has been adapted to allow for a regular set of ``frozen'' sites.

\begin{theorem}
\label{t.continuation}
For every small $\ep > 0$, there is a large $\alpha > 1$ such that, if
\begin{enumerate}
\item $Q \subseteq \Z^2$ a square with $\ell(Q) \geq \alpha$
\item $F \subseteq Q$ is $\ep$-regular in $Q$
\item $v : F \to \{ 0, 1 \}$
\item $\mathcal E_{\mathrm{uc}}(Q,F)$ denotes the event that
\begin{equation*}
\begin{cases}
|\lambda - \bar \lambda| \leq e^{-\alpha (\ell(Q) \log \ell(Q))^{1/2}} \\
H \psi = \lambda \psi \mbox{ in } Q \\
|\psi| \leq 1 \mbox{ in a $1- \ep (\ell(Q) \log(\ell(Q)))^{-1/2}$ fraction of $Q \setminus F$},
\end{cases}
\end{equation*}
implies $|\psi| \leq e^{\alpha \ell(Q) \log \ell(Q)}$ in $\tfrac12 Q$,
\end{enumerate}
then $\P[\mathcal E_{\mathrm{uc}}(Q,F) | V_F = v] \geq 1 - e^{-\ep \ell(Q)^{1/4}}$.
\end{theorem}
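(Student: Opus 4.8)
The plan is to adapt the diagonal-propagation argument that Buhovsky--Logunov--Malinnikova--Sodin \cite{Buhovsky-Logunov-Malinnikova-Sodin} use for lattice harmonic functions, putting $H$ in place of the Laplacian and using the randomness of $V$ to repair the one place where the deterministic version genuinely breaks down. Write $L = \ell(Q)$, suppose $\psi,\lambda$ satisfy the three conditions of $\mathcal E_{\mathrm{uc}}(Q,F)$ but $|\psi(x_0)| > e^{\alpha L \log L}$ for some $x_0 \in \tfrac12 Q$, and aim to show that this forces $V$ into an event of conditional probability at most $e^{-\ep L^{1/4}}$. Put $B = F \cup \{|\psi|>1\}$, so $|B \setminus F| \le \ep L^{3/2}(\log L)^{-1/2}$ by the third condition. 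The key local identity comes from rewriting $H\psi = \lambda\psi$ in the tilted coordinates $s = x+y$, $d = x-y$: the restriction of $\psi$ to a diagonal $D^\pm_{k+1}$ is determined by its restrictions to $D^\pm_k$ and $D^\pm_{k-1}$ together with one ``seed'' value on $D^\pm_{k+1}$, and each step of the resulting recurrence along $D^\pm_{k+1}$ changes magnitudes by a factor at most $C(1+|\lambda|) \le C$. Hence, if $\psi$ is bounded by some $M \ge 1$ on two consecutive diagonals crossing a tilted rectangle, and every intervening diagonal contains a point of $Q \setminus B$ (where $|\psi| \le 1$) to use as a seed, then marching the recurrence diagonal by diagonal bounds $|\psi|$ on the whole rectangle by $M$ times a factor exponential in (number of diagonals crossed) $\times$ log(their length).

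Building on this, I would establish a purely deterministic propagation lemma: there is $\delta = \delta(\ep) > 0$ such that if $B$ is $\delta$-sparse along both diagonal families in a tilted square $Q'$ of side $\asymp L^{1/2}$, then $|\psi| \le e^{CL^{1/2}\log L}$ everywhere in $Q'$ in terms of $|\psi|$ on any fixed $O(1)$-wide slice of $Q'$ on which we already have control. The mechanism is that $\delta$-sparsity leaves a $(1-\delta)$ fraction of every diagonal inside $Q \setminus B$, where $|\psi| \le 1$; these points serve simultaneously as seeds and as admissible recurrence anchors, while the at most $\delta$-fraction of points lying in $F$ or in $\{|\psi|>1\}$ on each diagonal, being never clustered, is simply overwritten by the recurrence. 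A short bootstrap — using that at this scale every point of $Q'$ lies within $O(\delta L^{1/2})$ of $Q \setminus B$ along one of the two diagonal directions — removes the need for a priori control on a full initial slice.

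Next I would chain such squares. To control $\psi$ at $x_0$ one wants a chain of $O(L^{1/2})$ many $\delta$-sparse tilted squares of side $\asymp L^{1/2}$ running from near $\partial Q$ to $x_0$; the accumulated amplification along the chain is then $e^{O(L\log L)}$, which contradicts $|\psi(x_0)| > e^{\alpha L\log L}$ once $\alpha$ is large. The obstructions are the scale-$L^{1/2}$ diagonal pieces on which $B$ fails to be $\delta$-sparse. Those due to $F$ alone have controlled total size by the $\ep$-regularity of $F$, and the chain can be routed to avoid them; those due to $\{|\psi|>1\}$ number at most $\ep L^{3/2}(\log L)^{-1/2}/(\delta L^{1/2}) = O(\ep\, L(\log L)^{-1/2}/\delta)$ by the size bound on $B \setminus F$. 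This last count is exactly the reason the hypothesis carries the exponent $\tfrac32$ and the extra $(\log L)^{-1/2}$ factor, and it leaves enough room to route the chain.

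Finally, the randomness. Deterministic unique continuation is false on $\Z^2$ (Jitomirskaya \cite{Jitomirskaya}*{Theorem 2}), and an adversarial set $\{|\psi|>1\}$ could in fact encircle $x_0$ and block every route. The remedy is to show that, conditionally on $V_F = v$, with probability at least $1 - e^{-\ep L^{1/4}}$ the potential is non-degenerate enough to push the diagonal recurrence through every tilted square a chain might need, even where $B$ is not sparse. By a union bound over the $\mathrm{poly}(L)$ tilted squares of side $\asymp L^{1/2}$ this reduces to a local estimate: for one such square, the probability that some admissible pair $(\psi,\lambda)$ blocks propagation is at most $e^{-cL^{1/4}}$. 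I would prove the local estimate by resampling — blocking propagation forces $V$ on a prescribed set of $\gtrsim L^{1/4}$ free sites of the square into an event of measure $\le 2^{-cL^{1/4}}$ — with the super-exponentially small window $|\lambda - \bar\lambda| \le e^{-\alpha(L\log L)^{1/2}}$ used to pin $\lambda$ tightly enough that the number of $(\psi,\lambda)$-patterns one must enumerate stays below $2^{cL^{1/4}}$. The crux of the whole theorem is this last step: isolating those $\gtrsim L^{1/4}$ pivotal free sites in each medium-scale square and controlling the resulting failure event uniformly over the continuum of admissible $(\psi,\lambda)$ — the ingredient that has no analogue in \cite{Buhovsky-Logunov-Malinnikova-Sodin}, where the operator is the fixed Laplacian. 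The diagonal recurrence and its amplification bound, the combinatorics of chaining $\delta$-sparse squares, and the extraction of $\delta$-sparse squares from an $\ep$-regular $F$ against the size budget on $\{|\psi|>1\}$ are by comparison routine.
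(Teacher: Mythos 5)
Your high-level intuition lines up with the paper — tilted coordinates, diagonal recurrence, randomness of the potential rescuing unique continuation, and an $\ep$-net over $(\psi,\lambda)$ made affordable by the narrow $\lambda$-window — and you correctly locate the crux in the probabilistic estimate that pushes propagation through. But there are two concrete gaps in the middle of your plan.

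First, your ``purely deterministic propagation lemma'' asserting that $\delta$-sparsity of $B = F \cup \{|\psi| > 1\}$ along both diagonal families in a tilted square forces $|\psi| \leq e^{CL^{1/2}\log L}$ is not a deterministic fact for eigenfunctions of $H$. The recurrence along a diagonal $D^\pm_k$ has the form $\psi_{s+1,t+1} - \psi_{s+1,t-1} = f_t$, where $f_t$ is determined by the previous two diagonals; a seed at one point only anchors a running sum, and that running sum can grow linearly along the diagonal even if the previous diagonals are bounded. A $\delta$-fraction of unbounded points can be concentrated in one contiguous run (sparsity is a density condition, not a gap condition), so there is no deterministic way to overwrite it. In the BLMS setting the gap is closed by the polynomial structure of $(-1)^{(s+t)/2}u$ plus the Remez inequality, and the paper explicitly observes that this device ``appears to break down in the presence of a potential.'' The paper's replacement, \lref{nice}, shows the needed improvement (bounded on a two-diagonal slice and a $1-\ep$ fraction of the far side implies bounded everywhere) only \emph{with high conditional probability}, via a martingale/Azuma argument along free sites. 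Your ``short bootstrap'' removing the need for a full initial slice is thus not a bootstrap of a true deterministic statement; it is the random step, and it needs the full machinery of \lref{nice} (or something equivalent).

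Second, the geometry and exponents are off. The paper's key lemma operates on extremely thin tilted rectangles $R_{[1,a],[1,b]}$ with $a \geq \alpha b^2 \log a$. This thinness is not cosmetic: the $\ep$-net over the southwest boundary data $\psi^0$ has size $e^{C\beta b^2 \log a}$, while the per-$\psi^0$ failure probability is $e^{-ca}$; the union bound closes only because $a \gg b^2 \log a$. Working with tilted \emph{squares} of side $\asymp L^{1/2}$ as you propose gives a net of size $e^{cL\log L}$ against a per-sample bound of at best $e^{-cL^{1/2}}$, and the union bound does not close. Relatedly, the final exponent $L^{1/4}$ does not arise from ``$\gtrsim L^{1/4}$ pivotal free sites in each medium-scale square''; for a tilted square of side $a$ the growth lemma \lref{excellent} has failure probability $e^{-\ep a}$, and $L^{1/4}$ enters only because the Calderon--Zygmund covering in the proof of \tref{continuation} descends to tilted squares of side as small as $\ell(Q)^{1/4}$. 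Your single-scale chaining of equal-size squares also has no analogue of the stopping-time argument (maximal good squares and the Vitali covering lemma) that the paper uses to show a maximal good square escapes $\tfrac12 Q$; that piece of bookkeeping could perhaps be replaced by a chain, but it would need to be worked out, and it is less robust against the frozen-set regularity than the paper's version.

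In short: the recurrence and amplification bound you describe is essentially \lref{exponential}, which is correct and deterministic, but you are leaning on it to do the job of \lref{nice}, which is genuinely probabilistic and requires the thin-rectangle geometry to make the $\ep$-net union bound converge.
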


The rest of this section is devoted to the proof of \tref{continuation}.

\subsection{Tilted coordinates}

We work in the tilted coordinates
\begin{equation*}
(s,t) = (x+y,x-y).
\end{equation*}
The lattice is $\{ (s,t) \in \Z^2 : s-t \mbox{ even} \}$.  The tilted rectangles are $R_{I,J} = \{ (s,t) \in I \times J : s - t \mbox{ even} \}.$  The equation $H \psi = \lambda \psi$ at the point $(s,t)$ is
\begin{equation}
\label{e.tilted1}
(4 + V_{s,t} - \lambda) \psi_{s,t} - \psi_{s-1,t-1} - \psi_{s+1,t+1} - \psi_{s-1,t+1} - \psi_{s+1,t-1} = 0.
\end{equation}

\subsection{Basic lemmas}

We recall and modify some elementary results from \cite{Buhovsky-Logunov-Malinnikova-Sodin}.  These give a priori bounds on how information propagates from the boundary to the interior of a tilted rectangle.  

\begin{definition}
  The west boundary of a tilted rectangle is
  \begin{equation*}
  \partial^w R_{[a,b],[c,d]} = R_{[a,a+1],[c,d]} \cup R_{[a,b],[c,c+1]}.
  \end{equation*}
\end{definition}

The main idea is that, if the equation $H \psi = \lambda \psi$ holds in a tilted rectangle $R$, then the values of $\psi$ on $R$ are determined by the values of $\psi$ on the west boundary $\partial^w R$.  A qualitative version of this is the following.

\begin{lemma}
\label{l.extension}
Every function $\psi : \partial^w R_{[1,a],[1,b]} \to \R$ has a unique extension $\psi : R_{[1,a],[1,b]} \to \R$ that satisfies $H \psi = \lambda \psi$ in $R_{[2,a-1],[2,b-1]}$. \qed
\end{lemma}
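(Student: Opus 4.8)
The plan is to prove \lref{extension} by solving the recursion \eref{tilted1} layer by layer, propagating the west boundary data into the rectangle in order of increasing $s+t$.

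First, note that \eref{tilted1} at a point $(s,t)$ can be solved for the ``northeast'' unknown:
\begin{equation*}
\psi_{s+1,t+1} = (4 + V_{s,t} - \lambda) \psi_{s,t} - \psi_{s-1,t-1} - \psi_{s-1,t+1} - \psi_{s+1,t-1}.
\end{equation*}
Reindexing $(s,t) \mapsto (s-1,t-1)$, this exhibits $\psi_{s,t}$ as an explicit function of $\psi_{s-1,t-1}$, $\psi_{s-2,t-2}$, $\psi_{s-2,t}$, and $\psi_{s,t-2}$, each of which has a strictly smaller value of the coordinate $s+t$.

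Next I would check that this sets up a well-founded recursion on $R_{[1,a],[1,b]}$. The complement of the west boundary inside $R_{[1,a],[1,b]}$ is precisely $\{ (s,t) : 3 \leq s \leq a,\ 3 \leq t \leq b,\ s - t \text{ even} \}$; every such point has $s + t \geq 6$, while conversely every lattice point of $R_{[1,a],[1,b]}$ with $s + t \leq 5$ lies on the west boundary. Hence one may define $\psi_{s,t}$ for $(s,t)$ off the west boundary, in order of increasing $s+t$, by the displayed formula: at each step the bounds $s, t \geq 3$ guarantee that the four points appearing on the right-hand side lie in $[1,a] \times [1,b]$, and since they have strictly smaller $s+t$ they are either on the west boundary, where $\psi$ is prescribed, or have already been assigned a value. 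This establishes existence, and uniqueness is automatic: any extension satisfying $H \psi = \lambda \psi$ on $R_{[2,a-1],[2,b-1]}$ must obey the same recursion.

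Finally I would verify that the $\psi$ constructed in this way actually satisfies $H \psi = \lambda \psi$ at every $(s,t) \in R_{[2,a-1],[2,b-1]}$. This amounts to the displayed identity with $(s,t)$ replaced by $(s+1,t+1)$; and since $(s,t) \in R_{[2,a-1],[2,b-1]}$ forces $s+1, t+1 \geq 3$ as well as $(s+1,t+1) \in R_{[1,a],[1,b]}$, the value $\psi_{s+1,t+1}$ was defined using exactly this identity, which rearranges back to \eref{tilted1} at $(s,t)$. I do not expect a substantive obstacle here: this is a discrete Goursat-type problem and the west boundary is exactly the right amount of Cauchy data. The only point requiring care is the index and parity bookkeeping, in particular the compatibility of the propagation variable $s+t$ with the lattice parity constraint that $s-t$ be even, and keeping careful track of which points lie on the west boundary versus the interior.
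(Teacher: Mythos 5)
Your proof is correct and takes essentially the same approach as the paper: rearranging the eigenvalue equation at $(s-1,t-1)$ to express $\psi_{s,t}$ in terms of points southwest of it, then filling in the rectangle recursively from the west boundary. You spell out a convenient well-ordering (increasing $s+t$) and explicitly verify the constructed extension satisfies the equation, both of which the paper leaves implicit, but the underlying argument is the same.
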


\begin{proof}
First, observe that the equation $H \psi = \lambda \psi$ at $(s-1,t-1)$ rearranges to
\begin{equation}
\label{e.tilted2}
\psi_{s,t} = (4 + V_{s-1,t-1} - \lambda) \psi_{s-1,t-1} - \psi_{s-2,t} - \psi_{s,t-2} - \psi_{s-2,t-2}.
\end{equation}
Second, observe that, if $(s,t) \in R_{[1,a],[1,b]} \setminus \partial^w R_{[1,a],[1,b]}$, then $(s-1,t-1) \in R_{[2,a-1],[2,b-1]}$ and $(s-2,t), (s,t-2), (s-2,t-2) \in R_{[1,a],[1,b]}$.  In particular, we can recursively iterate \eref{tilted2} for $(s-1,t-1) \in R_{[2,a-1],[2,b-1]}$ to determine the values of $\psi$ on $R$ from the values of $\psi$ in $\partial^w R$.  
\end{proof}

\begin{figure}
\begin{tabular}{cccc}
\begin{tikzpicture}[scale=1/6,rotate=45,yscale=-1]
\draw (1,1) node {$\bullet$};
\draw (1,3) node {$\bullet$};
\draw (1,5) node {$\bullet$};
\draw (1,7) node {$\bullet$};
\draw (1,9) node {$\bullet$};
\draw (2,2) node {$\bullet$};
\draw (2,4) node {$\bullet$};
\draw (2,6) node {$\bullet$};
\draw (2,8) node {$\bullet$};
\draw (3,1) node {$\bullet$};
\draw (3,3) node {$\circ$};
\draw (3,5) node {$\circ$};
\draw (3,7) node {$\circ$};
\draw (3,9) node {$\circ$};
\draw (4,2) node {$\bullet$};
\draw (4,4) node {$\circ$};
\draw (4,6) node {$\circ$};
\draw (4,8) node {$\circ$};
\draw (5,1) node {$\bullet$};
\draw (5,3) node {$\circ$};
\draw (5,5) node {$\circ$};
\draw (5,7) node {$\circ$};
\draw (5,9) node {$\circ$};
\draw (6,2) node {$\bullet$};
\draw (6,4) node {$\circ$};
\draw (6,6) node {$\circ$};
\draw (6,8) node {$\circ$};
\draw (7,1) node {$\bullet$};
\draw (7,3) node {$\circ$};
\draw (7,5) node {$\circ$};
\draw (7,7) node {$\circ$};
\draw (7,9) node {$\circ$};
\draw (8,2) node {$\bullet$};
\draw (8,4) node {$\circ$};
\draw (8,6) node {$\circ$};
\draw (8,8) node {$\circ$};
\draw (9,1) node {$\bullet$};
\draw (9,3) node {$\circ$};
\draw (9,5) node {$\circ$};
\draw (9,7) node {$\circ$};
\draw (9,9) node {$\circ$};
\draw (10,2) node {$\bullet$};
\draw (10,4) node {$\circ$};
\draw (10,6) node {$\circ$};
\draw (10,8) node {$\circ$};
\draw (11,1) node {$\bullet$};
\draw (11,3) node {$\circ$};
\draw (11,5) node {$\circ$};
\draw (11,7) node {$\circ$};
\draw (11,9) node {$\circ$};
\draw (12,2) node {$\bullet$};
\draw (12,4) node {$\circ$};
\draw (12,6) node {$\circ$};
\draw (12,8) node {$\circ$};
\draw (13,1) node {$\bullet$};
\draw (13,3) node {$\circ$};
\draw (13,5) node {$\circ$};
\draw (13,7) node {$\circ$};
\draw (13,9) node {$\circ$};
\end{tikzpicture}
&
\begin{tikzpicture}[scale=1/6,rotate=45,yscale=-1]
\draw (1,1) node {$\bullet$};
\draw (1,3) node {$\bullet$};
\draw (1,5) node {$\bullet$};
\draw (1,7) node {$\bullet$};
\draw (2,2) node {$\bullet$};
\draw (2,4) node {$\bullet$};
\draw (2,6) node {$\bullet$};
\draw (2,8) node {$\bullet$};
\draw (3,1) node {$\bullet$};
\draw (3,3) node {$\circ$};
\draw (3,5) node {$\circ$};
\draw (3,7) node {$\circ$};
\draw (4,2) node {$\bullet$};
\draw (4,4) node {$\circ$};
\draw (4,6) node {$\circ$};
\draw (4,8) node {$\circ$};
\draw (5,1) node {$\bullet$};
\draw (5,3) node {$\circ$};
\draw (5,5) node {$\circ$};
\draw (5,7) node {$\circ$};
\draw (6,2) node {$\bullet$};
\draw (6,4) node {$\circ$};
\draw (6,6) node {$\circ$};
\draw (6,8) node {$\circ$};
\draw (7,1) node {$\bullet$};
\draw (7,3) node {$\circ$};
\draw (7,5) node {$\circ$};
\draw (7,7) node {$\circ$};
\draw (8,2) node {$\bullet$};
\draw (8,4) node {$\circ$};
\draw (8,6) node {$\circ$};
\draw (8,8) node {$\circ$};
\draw (9,1) node {$\bullet$};
\draw (9,3) node {$\circ$};
\draw (9,5) node {$\circ$};
\draw (9,7) node {$\circ$};
\draw (10,2) node {$\bullet$};
\draw (10,4) node {$\circ$};
\draw (10,6) node {$\circ$};
\draw (10,8) node {$\circ$};
\draw (11,1) node {$\bullet$};
\draw (11,3) node {$\circ$};
\draw (11,5) node {$\circ$};
\draw (11,7) node {$\circ$};
\draw (12,2) node {$\bullet$};
\draw (12,4) node {$\circ$};
\draw (12,6) node {$\circ$};
\draw (12,8) node {$\circ$};
\draw (13,1) node {$\bullet$};
\draw (13,3) node {$\circ$};
\draw (13,5) node {$\circ$};
\draw (13,7) node {$\circ$};
\end{tikzpicture}
&
\begin{tikzpicture}[scale=1/6,rotate=45,yscale=-1]
\draw (1,3) node {$\bullet$};
\draw (1,5) node {$\bullet$};
\draw (1,7) node {$\bullet$};
\draw (1,9) node {$\bullet$};
\draw (2,2) node {$\bullet$};
\draw (2,4) node {$\bullet$};
\draw (2,6) node {$\bullet$};
\draw (2,8) node {$\bullet$};
\draw (3,3) node {$\bullet$};
\draw (3,5) node {$\circ$};
\draw (3,7) node {$\circ$};
\draw (3,9) node {$\circ$};
\draw (4,2) node {$\bullet$};
\draw (4,4) node {$\circ$};
\draw (4,6) node {$\circ$};
\draw (4,8) node {$\circ$};
\draw (5,3) node {$\bullet$};
\draw (5,5) node {$\circ$};
\draw (5,7) node {$\circ$};
\draw (5,9) node {$\circ$};
\draw (6,2) node {$\bullet$};
\draw (6,4) node {$\circ$};
\draw (6,6) node {$\circ$};
\draw (6,8) node {$\circ$};
\draw (7,3) node {$\bullet$};
\draw (7,5) node {$\circ$};
\draw (7,7) node {$\circ$};
\draw (7,9) node {$\circ$};
\draw (8,2) node {$\bullet$};
\draw (8,4) node {$\circ$};
\draw (8,6) node {$\circ$};
\draw (8,8) node {$\circ$};
\draw (9,3) node {$\bullet$};
\draw (9,5) node {$\circ$};
\draw (9,7) node {$\circ$};
\draw (9,9) node {$\circ$};
\draw (10,2) node {$\bullet$};
\draw (10,4) node {$\circ$};
\draw (10,6) node {$\circ$};
\draw (10,8) node {$\circ$};
\draw (11,3) node {$\bullet$};
\draw (11,5) node {$\circ$};
\draw (11,7) node {$\circ$};
\draw (11,9) node {$\circ$};
\draw (12,2) node {$\bullet$};
\draw (12,4) node {$\circ$};
\draw (12,6) node {$\circ$};
\draw (12,8) node {$\circ$};
\draw (13,3) node {$\bullet$};
\draw (13,5) node {$\circ$};
\draw (13,7) node {$\circ$};
\draw (13,9) node {$\circ$};
\end{tikzpicture}
\begin{tikzpicture}[scale=1/6,rotate=45,yscale=-1]
\draw (1,3) node {$\bullet$};
\draw (1,5) node {$\bullet$};
\draw (1,7) node {$\bullet$};
\draw (2,2) node {$\bullet$};
\draw (2,4) node {$\bullet$};
\draw (2,6) node {$\bullet$};
\draw (2,8) node {$\bullet$};
\draw (3,3) node {$\bullet$};
\draw (3,5) node {$\circ$};
\draw (3,7) node {$\circ$};
\draw (4,2) node {$\bullet$};
\draw (4,4) node {$\circ$};
\draw (4,6) node {$\circ$};
\draw (4,8) node {$\circ$};
\draw (5,3) node {$\bullet$};
\draw (5,5) node {$\circ$};
\draw (5,7) node {$\circ$};
\draw (6,2) node {$\bullet$};
\draw (6,4) node {$\circ$};
\draw (6,6) node {$\circ$};
\draw (6,8) node {$\circ$};
\draw (7,3) node {$\bullet$};
\draw (7,5) node {$\circ$};
\draw (7,7) node {$\circ$};
\draw (8,2) node {$\bullet$};
\draw (8,4) node {$\circ$};
\draw (8,6) node {$\circ$};
\draw (8,8) node {$\circ$};
\draw (9,3) node {$\bullet$};
\draw (9,5) node {$\circ$};
\draw (9,7) node {$\circ$};
\draw (10,2) node {$\bullet$};
\draw (10,4) node {$\circ$};
\draw (10,6) node {$\circ$};
\draw (10,8) node {$\circ$};
\draw (11,3) node {$\bullet$};
\draw (11,5) node {$\circ$};
\draw (11,7) node {$\circ$};
\draw (12,2) node {$\bullet$};
\draw (12,4) node {$\circ$};
\draw (12,6) node {$\circ$};
\draw (12,8) node {$\circ$};
\draw (13,3) node {$\bullet$};
\draw (13,5) node {$\circ$};
\draw (13,7) node {$\circ$};
\end{tikzpicture}
&
\\
\begin{tikzpicture}[scale=1/6,rotate=45,yscale=-1]
\draw (1,1) node {$\bullet$};
\draw (1,3) node {$\bullet$};
\draw (1,5) node {$\bullet$};
\draw (1,7) node {$\bullet$};
\draw (1,9) node {$\bullet$};
\draw (2,2) node {$\bullet$};
\draw (2,4) node {$\bullet$};
\draw (2,6) node {$\bullet$};
\draw (2,8) node {$\bullet$};
\draw (3,1) node {$\bullet$};
\draw (3,3) node {$\circ$};
\draw (3,5) node {$\circ$};
\draw (3,7) node {$\circ$};
\draw (3,9) node {$\circ$};
\draw (4,2) node {$\bullet$};
\draw (4,4) node {$\circ$};
\draw (4,6) node {$\circ$};
\draw (4,8) node {$\circ$};
\draw (5,1) node {$\bullet$};
\draw (5,3) node {$\circ$};
\draw (5,5) node {$\circ$};
\draw (5,7) node {$\circ$};
\draw (5,9) node {$\circ$};
\draw (6,2) node {$\bullet$};
\draw (6,4) node {$\circ$};
\draw (6,6) node {$\circ$};
\draw (6,8) node {$\circ$};
\draw (7,1) node {$\bullet$};
\draw (7,3) node {$\circ$};
\draw (7,5) node {$\circ$};
\draw (7,7) node {$\circ$};
\draw (7,9) node {$\circ$};
\draw (8,2) node {$\bullet$};
\draw (8,4) node {$\circ$};
\draw (8,6) node {$\circ$};
\draw (8,8) node {$\circ$};
\draw (9,1) node {$\bullet$};
\draw (9,3) node {$\circ$};
\draw (9,5) node {$\circ$};
\draw (9,7) node {$\circ$};
\draw (9,9) node {$\circ$};
\draw (10,2) node {$\bullet$};
\draw (10,4) node {$\circ$};
\draw (10,6) node {$\circ$};
\draw (10,8) node {$\circ$};
\draw (11,1) node {$\bullet$};
\draw (11,3) node {$\circ$};
\draw (11,5) node {$\circ$};
\draw (11,7) node {$\circ$};
\draw (11,9) node {$\circ$};
\draw (12,2) node {$\bullet$};
\draw (12,4) node {$\circ$};
\draw (12,6) node {$\circ$};
\draw (12,8) node {$\circ$};
\draw (13,1) node {$\bullet$};
\draw (13,3) node {$\circ$};
\draw (13,5) node {$\circ$};
\draw (13,7) node {$\circ$};
\draw (13,9) node {$\circ$};
\draw (14,2) node {$\bullet$};
\draw (14,4) node {$\circ$};
\draw (14,6) node {$\circ$};
\draw (14,8) node {$\circ$};
\end{tikzpicture}
&
\begin{tikzpicture}[scale=1/6,rotate=45,yscale=-1]
\draw (1,1) node {$\bullet$};
\draw (1,3) node {$\bullet$};
\draw (1,5) node {$\bullet$};
\draw (1,7) node {$\bullet$};
\draw (2,2) node {$\bullet$};
\draw (2,4) node {$\bullet$};
\draw (2,6) node {$\bullet$};
\draw (2,8) node {$\bullet$};
\draw (3,1) node {$\bullet$};
\draw (3,3) node {$\circ$};
\draw (3,5) node {$\circ$};
\draw (3,7) node {$\circ$};
\draw (4,2) node {$\bullet$};
\draw (4,4) node {$\circ$};
\draw (4,6) node {$\circ$};
\draw (4,8) node {$\circ$};
\draw (5,1) node {$\bullet$};
\draw (5,3) node {$\circ$};
\draw (5,5) node {$\circ$};
\draw (5,7) node {$\circ$};
\draw (6,2) node {$\bullet$};
\draw (6,4) node {$\circ$};
\draw (6,6) node {$\circ$};
\draw (6,8) node {$\circ$};
\draw (7,1) node {$\bullet$};
\draw (7,3) node {$\circ$};
\draw (7,5) node {$\circ$};
\draw (7,7) node {$\circ$};
\draw (8,2) node {$\bullet$};
\draw (8,4) node {$\circ$};
\draw (8,6) node {$\circ$};
\draw (8,8) node {$\circ$};
\draw (9,1) node {$\bullet$};
\draw (9,3) node {$\circ$};
\draw (9,5) node {$\circ$};
\draw (9,7) node {$\circ$};
\draw (10,2) node {$\bullet$};
\draw (10,4) node {$\circ$};
\draw (10,6) node {$\circ$};
\draw (10,8) node {$\circ$};
\draw (11,1) node {$\bullet$};
\draw (11,3) node {$\circ$};
\draw (11,5) node {$\circ$};
\draw (11,7) node {$\circ$};
\draw (12,2) node {$\bullet$};
\draw (12,4) node {$\circ$};
\draw (12,6) node {$\circ$};
\draw (12,8) node {$\circ$};
\draw (13,1) node {$\bullet$};
\draw (13,3) node {$\circ$};
\draw (13,5) node {$\circ$};
\draw (13,7) node {$\circ$};
\draw (14,2) node {$\bullet$};
\draw (14,4) node {$\circ$};
\draw (14,6) node {$\circ$};
\draw (14,8) node {$\circ$};
\end{tikzpicture}
&
\begin{tikzpicture}[scale=1/6,rotate=45,yscale=-1]
\draw (1,3) node {$\bullet$};
\draw (1,5) node {$\bullet$};
\draw (1,7) node {$\bullet$};
\draw (1,9) node {$\bullet$};
\draw (2,2) node {$\bullet$};
\draw (2,4) node {$\bullet$};
\draw (2,6) node {$\bullet$};
\draw (2,8) node {$\bullet$};
\draw (3,3) node {$\bullet$};
\draw (3,5) node {$\circ$};
\draw (3,7) node {$\circ$};
\draw (3,9) node {$\circ$};
\draw (4,2) node {$\bullet$};
\draw (4,4) node {$\circ$};
\draw (4,6) node {$\circ$};
\draw (4,8) node {$\circ$};
\draw (5,3) node {$\bullet$};
\draw (5,5) node {$\circ$};
\draw (5,7) node {$\circ$};
\draw (5,9) node {$\circ$};
\draw (6,2) node {$\bullet$};
\draw (6,4) node {$\circ$};
\draw (6,6) node {$\circ$};
\draw (6,8) node {$\circ$};
\draw (7,3) node {$\bullet$};
\draw (7,5) node {$\circ$};
\draw (7,7) node {$\circ$};
\draw (7,9) node {$\circ$};
\draw (8,2) node {$\bullet$};
\draw (8,4) node {$\circ$};
\draw (8,6) node {$\circ$};
\draw (8,8) node {$\circ$};
\draw (9,3) node {$\bullet$};
\draw (9,5) node {$\circ$};
\draw (9,7) node {$\circ$};
\draw (9,9) node {$\circ$};
\draw (10,2) node {$\bullet$};
\draw (10,4) node {$\circ$};
\draw (10,6) node {$\circ$};
\draw (10,8) node {$\circ$};
\draw (11,3) node {$\bullet$};
\draw (11,5) node {$\circ$};
\draw (11,7) node {$\circ$};
\draw (11,9) node {$\circ$};
\draw (12,2) node {$\bullet$};
\draw (12,4) node {$\circ$};
\draw (12,6) node {$\circ$};
\draw (12,8) node {$\circ$};
\draw (13,3) node {$\bullet$};
\draw (13,5) node {$\circ$};
\draw (13,7) node {$\circ$};
\draw (13,9) node {$\circ$};
\draw (14,2) node {$\bullet$};
\draw (14,4) node {$\circ$};
\draw (14,6) node {$\circ$};
\draw (14,8) node {$\circ$};
\end{tikzpicture}
\begin{tikzpicture}[scale=1/6,rotate=45,yscale=-1]
\draw (1,3) node {$\bullet$};
\draw (1,5) node {$\bullet$};
\draw (1,7) node {$\bullet$};
\draw (2,2) node {$\bullet$};
\draw (2,4) node {$\bullet$};
\draw (2,6) node {$\bullet$};
\draw (2,8) node {$\bullet$};
\draw (3,3) node {$\bullet$};
\draw (3,5) node {$\circ$};
\draw (3,7) node {$\circ$};
\draw (4,2) node {$\bullet$};
\draw (4,4) node {$\circ$};
\draw (4,6) node {$\circ$};
\draw (4,8) node {$\circ$};
\draw (5,3) node {$\bullet$};
\draw (5,5) node {$\circ$};
\draw (5,7) node {$\circ$};
\draw (6,2) node {$\bullet$};
\draw (6,4) node {$\circ$};
\draw (6,6) node {$\circ$};
\draw (6,8) node {$\circ$};
\draw (7,3) node {$\bullet$};
\draw (7,5) node {$\circ$};
\draw (7,7) node {$\circ$};
\draw (8,2) node {$\bullet$};
\draw (8,4) node {$\circ$};
\draw (8,6) node {$\circ$};
\draw (8,8) node {$\circ$};
\draw (9,3) node {$\bullet$};
\draw (9,5) node {$\circ$};
\draw (9,7) node {$\circ$};
\draw (10,2) node {$\bullet$};
\draw (10,4) node {$\circ$};
\draw (10,6) node {$\circ$};
\draw (10,8) node {$\circ$};
\draw (11,3) node {$\bullet$};
\draw (11,5) node {$\circ$};
\draw (11,7) node {$\circ$};
\draw (12,2) node {$\bullet$};
\draw (12,4) node {$\circ$};
\draw (12,6) node {$\circ$};
\draw (12,8) node {$\circ$};
\draw (13,3) node {$\bullet$};
\draw (13,5) node {$\circ$};
\draw (13,7) node {$\circ$};
\draw (14,2) node {$\bullet$};
\draw (14,4) node {$\circ$};
\draw (14,6) node {$\circ$};
\draw (14,8) node {$\circ$};
\end{tikzpicture}
&
\end{tabular}
\caption{The eight types of non-degenerate tilted rectangles $R_{[a,b],[c,d]}$ depending on the relative parities of $a,b,c,d$.  The west boundaries are displayed in black.}
\label{f.tilted}
\end{figure}

\fref{tilted} displays several examples of tilted rectangles for which one can easily verify the algorithm from the proof of \lref{extension}.  By quantifying the rate of growth in this algorithm, we obtain the following.

\begin{lemma}
\label{l.exponential}
If $H \psi = \lambda \psi$ in $R = R_{[2,a-1],[2,b-1]}$, then
\begin{equation*}
\| \psi \|_{\ell^\infty(R_{[1,a],[1,b]})} \leq e^{C b \log a} \| \psi \|_{\ell^\infty(\partial^w R_{[1,a],[1,b]})}.
\end{equation*}
\end{lemma}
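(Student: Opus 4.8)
The plan is to make the recursive extension procedure of \lref{extension} quantitative by sweeping across $R_{[1,a],[1,b]}$ one antidiagonal row at a time, in the direction of increasing $t$. Write $B = \| \psi \|_{\ell^\infty(\partial^w R_{[1,a],[1,b]})}$ and, for $1 \le t \le b$, let $P_t$ denote the maximum of $|\psi_{s,t}|$ over the lattice points of $R_{[1,a],[1,b]}$ with second coordinate $t$. Since $R_{[1,a],[1,2]} \subseteq \partial^w R_{[1,a],[1,b]}$ we have $P_1, P_2 \le B$, and the cases $a \le 2$ or $b \le 2$ are trivial because then $R_{[1,a],[1,b]} \subseteq \partial^w R_{[1,a],[1,b]}$; so assume $a, b \ge 3$. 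The first goal is the one-step estimate $P_t \le B + C a \max(P_{t-1}, P_{t-2})$ for $3 \le t \le b$.

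To prove this, fix such a $t$ and list the lattice points of row $t$ as $(s_0,t),(s_0+2,t),(s_0+4,t),\dots$; the parity constraint $s-t \in 2\Z$ forces $s_0 \in \{1,2\}$, so the first of these lies in $R_{[1,2],[1,b]} \subseteq \partial^w R_{[1,a],[1,b]}$, whence $|\psi_{s_0,t}| \le B$. For $j \ge 1$ the point $(s_0+2j,t)$ lies in $R_{[1,a],[1,b]} \setminus \partial^w R_{[1,a],[1,b]}$, and since then $(s_0+2j-1,t-1) \in R_{[2,a-1],[2,b-1]}$, \eref{tilted2} (applied at $(s,t)=(s_0+2j,t)$) expresses $\psi_{s_0+2j,t}$ as $(4 + V_{s_0+2j-1,t-1} - \lambda)\psi_{s_0+2j-1,t-1}$ minus the three terms $\psi_{s_0+2j-2,t}$, $\psi_{s_0+2j,t-2}$, $\psi_{s_0+2j-2,t-2}$. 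The leading coefficient is bounded by a universal constant $K$ since $V \in [0,1]$ and $\lambda \in [0,9]$; the first of the three subtracted terms is the previous point on row $t$, and the other two lie on row $t-2$. Hence $|\psi_{s_0+2j,t}| \le |\psi_{s_0+2(j-1),t}| + K P_{t-1} + 2 P_{t-2}$, and as there are at most $a/2$ admissible values of $j$, summing this telescoping inequality down to $j=0$ gives $P_t \le B + \tfrac{a}{2}(K+2)\max(P_{t-1},P_{t-2})$, as desired.

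To conclude, set $m = \tfrac{a}{2}(K+2)$, so that $m \le Ca$ and $m \ge 2$ (using $a \ge 3$), and let $Q_t = \max_{t' \le t} P_{t'}$. The one-step estimate upgrades to $Q_t \le B + m Q_{t-1}$ for $t \ge 3$, and together with $Q_2 \le B$ this unrolls to $Q_b \le B \sum_{0 \le j < b} m^j \le B m^b$. Since $\|\psi\|_{\ell^\infty(R_{[1,a],[1,b]})} = \max_t P_t = Q_b$ and $m^b = e^{b\log m} \le e^{b \log(Ca)} \le e^{C' b \log a}$ for $a \ge 3$, this is the claimed bound.

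I do not anticipate a genuine obstacle; the content is the bookkeeping. Two points must be gotten right. First, one must sweep in the $t$-direction, so that the exponent in the final bound is $b$, while resolving each row in the $s$-direction, so that the base of the exponential is the row length $\sim a$; sweeping the other way would yield the wrong estimate $e^{C a \log b}$. Second, one must use the parity of the lattice to see that the initial point of every row already lies on the west boundary, so each row sweep is genuinely seeded by boundary data. \fref{tilted} makes both points evident.
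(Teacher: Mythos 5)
Your proof is correct and takes essentially the same route as the paper's: both iterate \eref{tilted2} outward from the west boundary in a way that makes the exponent $b$ and the base $\sim a$. The paper packages the bookkeeping into the single inductive ansatz $|\psi_{s,t}| \le (\alpha s)^t$, whereas you separate it into a telescoping sum over $s$ within each row followed by a geometric recursion in $t$; this is a presentational variant, not a different argument.
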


\begin{proof}
We show that, if $H \psi = \lambda \psi$ in $R_{[2,a-1],[2,b-1]}$ and $|\psi| \leq 1$ in $\partial^w R_{[1,a],[1,b]}$, then $|\psi_{s,t}| \leq (\alpha s)^t$ for $(s,t) \in R_{[1,a],[1,b]}$.  Here $\alpha \geq 1$ is a universal constant to be determined.  We prove this by induction on $(s,t)$.  First, we note that, if $(s,t) \in \partial^w R_{[1,a],[1,b]}$, then $|\psi_{s,t}| \leq 1 \leq (\alpha s)^t$ holds by assumption.  Second, we note that, if $(s, t) \in R_{[3,a],[3,b]}$, then we can use the equation \eref{tilted2} and the induction hypothesis to estimate
\begin{equation*}
\begin{aligned}
|\psi_{s,t}|
& \leq C |\psi_{s-1,t-1}| + |\psi_{s-2,t}| + |\psi_{s,t-2}| + |\psi_{s-2,t-2}| \\
& \leq C (\alpha (s-1))^{t-1} + (\alpha (s-2))^t + (\alpha s)^{t-2} + (\alpha(s-2))^{t-2} \\
& \leq (\alpha s)^t ((\tfrac{s-2}{s})^t + C (\alpha s)^{-1}) \\
& \leq (\alpha s)^t (1 - 2 s^{-1} + C (\alpha s)^{-1}) \\
& \leq (\alpha s)^{t}.
\end{aligned}
\end{equation*}
Here we used $|4 + V_{s-1,t-1} - \lambda| \leq C$ and $\alpha \geq 2 C$.
\end{proof}

Differentiating the expression for $\psi$ on $R$ in terms of $\psi$ on $\partial^w R$ with respect to $\lambda$ and using \lref{exponential}, we obtain the following quantitative estimate of the dependence on $\lambda$.

\begin{lemma}
\label{l.lambdadependence}
If $H \psi_0 = \lambda_0 \psi_0$ and $H \psi_1 = \lambda_1 \psi_1$  in $R_{[2,a-1],[2,b-1]}$ and $\psi_0 = \psi_1$ in $\partial^w R_{[1,a],[1,b]}$, then
\begin{equation*}
\| \psi_0 - \psi_1 \|_{\ell^\infty(R_{[1,a],[1,b]})} \leq e^{C a \log b} \| \psi_0 \|_{\ell^\infty(\partial^w R_{[1,a],[1,b]})} |\lambda_0 - \lambda_1|.
\end{equation*}
\end{lemma}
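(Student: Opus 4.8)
The plan is to subtract the two eigenfunction equations and propagate the difference from the common west boundary by the recursion \eref{tilted2}, exactly as in the proof of \lref{exponential}, while carrying along an extra inhomogeneous term. Write $w = \psi_0 - \psi_1$ and $G = \|\psi_0\|_{\ell^\infty(\partial^w R_{[1,a],[1,b]})}$. Since $\psi_0 = \psi_1$ on $\partial^w R_{[1,a],[1,b]}$, the function $w$ vanishes there, and by \lref{extension} it is thereby determined on all of $R_{[1,a],[1,b]}$. Subtracting $H\psi_1 = \lambda_1\psi_1$ from $H\psi_0 = \lambda_0\psi_0$ in the interior $R_{[2,a-1],[2,b-1]}$ gives $Hw - \lambda_0 w = (\lambda_0 - \lambda_1)\psi_1$ there, and rearranging this relation at the site $(s-1,t-1)$ exactly as in the derivation of \eref{tilted2} yields
\[
w_{s,t} = (4 + V_{s-1,t-1} - \lambda_0)\,w_{s-1,t-1} - w_{s-2,t} - w_{s,t-2} - w_{s-2,t-2} - (\lambda_0 - \lambda_1)(\psi_1)_{s-1,t-1}
\]
for $(s,t) \in R_{[3,a],[3,b]}$, with $w = 0$ on $\partial^w R_{[1,a],[1,b]}$.

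First I would bound the source: applying \lref{exponential} to $\psi_1$ (whose coefficients $4 + V_{s,t} - \lambda_1$ are bounded since $\lambda_1 \in [0,9]$), we get $|(\psi_1)_{s,t}| \le M := e^{C b \log a}\, G$ throughout $R_{[1,a],[1,b]}$. Then I would repeat the induction from the proof of \lref{exponential}, now for $|w_{s,t}|$, proving $|w_{s,t}| \le |\lambda_0 - \lambda_1|\, M\, (\alpha s)^t$ for a large universal $\alpha$. The base case on $\partial^w R_{[1,a],[1,b]}$ is immediate because $w$ vanishes. In the inductive step the four $w$-terms are estimated verbatim as in \lref{exponential}, contributing at most $|\lambda_0 - \lambda_1|\, M\, (\alpha s)^t(1 - 2 s^{-1} + C(\alpha s)^{-1})$, while the new term contributes the additive amount $|\lambda_0 - \lambda_1|\, M$; the induction therefore closes provided $(\alpha s)^t s^{-1} \ge 1$, which holds for every $t \ge 1$ once $\alpha \ge 2C$. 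Evaluating at the far corner $s = a$, $t = b$ and absorbing $M$ together with $(\alpha a)^b$ into a single prefactor of the stated type gives $\|\psi_0 - \psi_1\|_{\ell^\infty(R_{[1,a],[1,b]})} \le e^{C a \log b}\, G\, |\lambda_0 - \lambda_1|$.

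The one point requiring care is the verification that the induction of \lref{exponential} still closes after the inhomogeneous term is inserted; this is the main (though modest) obstacle, and it works precisely because the bound $(\alpha s)^t$ leaves polynomial slack into which the additive source fits. The remaining steps — deriving the recursion for $w$, and collapsing the several $e^{Cb\log a}$-type factors (one from applying \lref{exponential} to $\psi_1$, one from the iteration) into one exponential prefactor, freely trading $\ell^\infty$ for $\ell^2$ norms along the way — are routine bookkeeping. Equivalently, as the surrounding text suggests, one may fix the west-boundary data, note that its \lref{extension}-extension $\psi^\lambda$ depends polynomially on $\lambda$, differentiate in $\lambda$ to see that $\partial_\lambda \psi^\lambda$ solves the same recursion with source $-\psi^\lambda$ and zero boundary data, bound it by the argument just described, and conclude via $\psi_0 - \psi_1 = \int \partial_\lambda \psi^\lambda\, d\lambda$ over $\lambda$ between $\lambda_1$ and $\lambda_0$.
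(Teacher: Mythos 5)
Your proposal is correct and is essentially the paper's proof: the paper differentiates $\psi_\lambda$ in $\lambda$ and bounds $\dot\psi_\lambda$ (which solves an inhomogeneous version of \eref{tilted2} with zero west-boundary data and source $\psi_\lambda$) by re-running the induction from \lref{exponential}, then integrates; your direct subtraction of the two eigenfunction equations produces exactly the same inhomogeneous recursion for $w=\psi_0-\psi_1$ with zero west-boundary data, and the induction closes for the same reason, so the two routes are interchangeable. You in fact point out this equivalence yourself in the final paragraph, and your explicit verification that the additive source fits into the slack $(\alpha s)^t s^{-1}\ge 1$ is the detail the paper leaves implicit when it says ``the proof of \lref{exponential} gives.''
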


\begin{proof}
Using \lref{extension}, let $\psi_\lambda : R_{[1,a],[1,b]} \to \R$ be the unique function such that $H \psi_\lambda = \lambda \psi_\lambda$ in $R_{[2,a-1],[2,b-1]}$ and $\psi_\lambda = \psi_0$ in $\partial^w R_{[1,a],[1,b]}$.  Since the derivative $\dot \psi_\lambda = \frac{d}{d\lambda} \psi_\lambda$ satisfies
$$\begin{cases}
H \dot \psi_\lambda = \lambda \dot \psi_\lambda + \psi_\lambda & \mbox{in } R_{[3,a],[3,b]} \\
\dot \psi_\lambda = 0 & \mbox{in } R_{[1,a],[1,b]} \setminus R_{[3,a],[3,b]},
\end{cases}$$
the proof of \lref{exponential} gives
\begin{equation*}
\| \dot \psi_\lambda \|_{\ell^\infty(R_{[1,a],[1,b]})} \leq e^{C b\log a} \|\psi_0\|_{\ell^\infty(\partial^w R_{[1,a],[1,b]})}.
\end{equation*}
Integrating over $\lambda \in [\lambda_0,\lambda_1]$ gives the desired estimate.
\end{proof}

\subsection{Key lemma}

We recall a key ingredient \cite{Buhovsky-Logunov-Malinnikova-Sodin}*{Lemma 3.4} used in the upper bound in the Liouville theorem for harmonic functions on the lattice. 

\begin{lemma}
If $a \geq C b$, $\Delta u = 0$ in $R_{[2,a-1],[2,b-1]}$, $|u| \leq 1$ in $R_{[1,a],[1,2]}$, and $|u| \leq 1$ in a $1/2$ fraction of $R_{[1,a],[b,b]}$, then $|u| \leq e^{C b \log a}$ in $R_{[1,a],[1,b]}$. \qed
\end{lemma}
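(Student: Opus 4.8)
I would work in the tilted coordinates of \eref{tilted1}, in which (with $V\equiv 0$, $\lambda=0$) the equation $\Delta u=0$ reads $4u_{s,t}=u_{s-1,t-1}+u_{s+1,t+1}+u_{s-1,t+1}+u_{s+1,t-1}$ and the propagation lemmas \lref{extension}--\lref{exponential} apply. The first move is to peel off the part of $u$ that those lemmas already control: using \lref{extension}, let $u_0$ be the solution in $R_{[2,a-1],[2,b-1]}$ whose data on $\partial^w R_{[1,a],[1,b]}$ agrees with $u$ on the ``south'' part $R_{[1,a],[1,2]}$ and vanishes on the two ``west'' columns $R_{[1,2],[1,b]}$, and put $w:=u-u_0$. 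Then $|u_0|\le 1$ on $\partial^w R_{[1,a],[1,b]}$, so \lref{exponential} gives $\|u_0\|_{\ell^\infty(R_{[1,a],[1,b]})}\le e^{Cb\log a}$, and $w$ is a solution vanishing on $R_{[1,a],[1,2]}$. Everything reduces to bounding $w$, the difficulty being that we have no a priori control of $w$ on the west columns.

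The structural point (this is essentially a step from \cite{Buhovsky-Logunov-Malinnikova-Sodin}) is that a solution vanishing on the first two diagonal rows is rigid: by induction on $t$, on the row $R_{[1,a],[t,t]}$ such a $w$ equals a $(\pm1)$-sequence times a polynomial $Q_t$ of degree $\le t-3$ in the position along the row. Indeed, rearranging the equation expresses the consecutive sum of $w$ along row $t+1$ in terms of rows $t$ and $t-1$; after stripping the alternating sign this reads $\Delta Q_{t+1}=4Q_t-\Delta Q_{t-1}$ up to affine reindexing, so $Q_{t+1}$ is a discrete antiderivative of a polynomial of degree $\le t-3$, hence has degree $\le t-2$, while $Q_1=Q_2\equiv 0$ and $Q_3\equiv\mathrm{const}$ are immediate. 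In particular $w$ on the top row $R_{[1,a],[b,b]}$ equals $\pm P$ for a polynomial $P$ of degree $d:=b-3$, and the hypothesis $a\ge Cb$ gives $d+1<a/4$.

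Next I would use the top-row hypothesis. On the $\ge a/4$ positions of $R_{[1,a],[b,b]}$ where $|u|\le 1$ one has $|P|\le|u|+|u_0|\le 1+e^{Cb\log a}$. Since these positions number at least $d+1$ among the $\le a/2$ positions on the row, I can select $d+1$ of them with consecutive spacing $\gtrsim a/d$; Lagrange interpolation then controls $P$ at every position, the weights $\prod_{k\ne i}\frac{|x-x_k|}{|x_i-x_k|}$ being at most $(2d)^d/(i!(d-i)!)\le (Ce)^d$ by Stirling. Hence $\|P\|_{\ell^\infty(R_{[1,a],[b,b]})}\le (1+e^{Cb\log a})e^{Cd}\le e^{Cb\log a}$, so $w$, and thus $u$, is bounded by $e^{Cb\log a}$ on all of the top row.

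Finally I would propagate this bound to the whole strip. A solution $w$ vanishing on rows $1,2$ is determined by its $b-2$ west-column values $c_3,\dots,c_b$; writing $w=\sum_k c_k\rho_k$ with $\rho_k$ the solution having south data $0$ and $k$-th west-column value $1$, we have $\|\rho_k\|_{\ell^\infty(R_{[1,a],[1,b]})}\le e^{Cb\log a}$ by \lref{exponential}, and $\rho_k$ restricted to the top row is $\pm$ a polynomial of degree $b-k$, so $(c_k)\mapsto P$ is a triangular linear isomorphism onto polynomials of degree $\le d$. It then suffices to bound $\max_k|c_k|$ by $e^{Cb\log a}\|P\|_{\ell^\infty(R_{[1,a],[b,b]})}$, i.e.\ to show this map is well conditioned; I would do this in the binomial basis $\{\binom jm\}$, in which the discrete antiderivative above is simply the shift $\binom jm\mapsto\binom j{m+1}$, so the top-row polynomial of $\rho_k$ has leading coefficient $4^{b-k}$ rather than $4^{b-k}/(b-k)!$, while the coefficients $\gamma_m=\Delta^m P(0)$ of $P$ obey $|\gamma_m|\le 2^m\|P\|_{\ell^\infty}$; granting the conditioning, back-solving and summing gives $\|w\|_{\ell^\infty(R_{[1,a],[1,b]})}\le e^{Cb\log a}$, and with the first step $\|u\|_{\ell^\infty(R_{[1,a],[1,b]})}\le e^{Cb\log a}$. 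The first three steps are routine given \lref{extension}--\lref{exponential} and elementary estimates; the main obstacle is precisely the conditioning estimate in this last step — it is where $a\ge Cb$ enters, and in the monomial basis the factorials would make the same computation lose a factor $e^{Cb^2\log a}$.
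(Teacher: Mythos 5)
The paper does not actually prove this lemma; it is quoted from Buhovsky--Logunov--Malinnikova--Sodin \cite{Buhovsky-Logunov-Malinnikova-Sodin}*{Lemma 3.4} with only a one-sentence description of the mechanism: subtract off the part of $u$ controlled by the south data so that the residual vanishes on $R_{[1,a],[1,2]}$, observe that the alternating-sign residual is a polynomial in $s$ on each diagonal, and use the Remez inequality on the top row to pass from a $\tfrac12$-fraction bound to a global bound there. Your steps 1--3 follow this outline faithfully: the splitting $u = u_0 + w$ with $\|u_0\|_\infty \le e^{Cb\log a}$ via \lref{exponential}, the polynomial structure $Q_t$ of degree $\le t-3$ with the recursion $\Delta Q_{t+1} = 4Q_t - \Delta Q_{t-1}$, and replacing the Remez inequality with Lagrange interpolation at $b-2$ nodes chosen with spacing $\gtrsim a/b$ among the good $\ge a/4$ positions (a perfectly serviceable substitute, giving the same $e^{Cb}$ loss; the hypothesis $a\ge Cb$ is exactly what guarantees enough nodes with that spacing).

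The genuine gap is step 4, and you flag it yourself with ``granting the conditioning.'' Bounding $u$ on the top row does not in itself bound it on the interior: \lref{exponential} propagates from the west boundary $R_{[1,2],[1,b]}\cup R_{[1,a],[1,2]}$, and the west-column data $(c_3,\dots,c_b)$ of $w$ is unknown; the content of the lemma is precisely that the triangular map $(c_k)\mapsto P$ has a well-bounded inverse. Your observation about the binomial basis is the right starting move and can be pushed through: defining $S_j(x)=\sum_m R_j[m]\,x^m$ where $R_j[m]$ is the $\binom{\cdot}{m}$-coefficient of $\rho_{b-j}$ on row $b$, the recursion becomes $S_{j+1}=4xS_j-S_{j-1}$, so $S_j(x)=U_j(2x)$, the Chebyshev polynomial of the second kind; the normalized entries $L_{m,j}/4^m$ are therefore $\pm\binom{(j+m)/2}{(j-m)/2}$, and combining this with $|\gamma_m|\le 2^m\|P\|_\infty$ and a geometric-series back-substitution yields $\max_k|c_k|\le e^{Cb}\|P\|_\infty$, which together with \lref{exponential} finishes the proof. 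As submitted, however, the key inverse bound is asserted rather than established, and since this is the only step where the hypotheses $a\ge Cb$ and the choice of basis actually bite (as you correctly note, monomials would lose $e^{Cb^2\log a}$), the argument as written is a sketch of the lemma rather than a proof of it.
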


Note that the bounds on $u$ in the above lemma are on opposite sides of the rectangle.  The lemma says that, if $u$ is harmonic in a rectangle, bounded on the northwest boundary, and bounded on half of the southeast boundary, then $u$ is bounded on the entire rectangle.

The main idea of the proof of the above lemma is the observation that, if $u = 0$ on $R_{[1,a],[1,2]}$ and $u$ is harmonic in $R_{[1,a],[1,b]}$, then function $v_{s,t} = (-1)^{(s+t)/2} u_{s,t}$ is a polynomial of degree at most $t - 2$ in the variable $s$.  Using the polynomial structure, the Remez inequality \cite{Bojanov} on the southeast boundary $R_{[1,a],[b,b]}$ takes us from bounded on half of the points to bounded on all of the points.  This argument is delicate, and appears to break down in the presence of a potential.  Indeed, there is no reason to expect that a solution of $H \psi = \lambda \psi$ with $\psi = 0$ on $R_{[1,a],[1,2]}$ will have polynomial structure.

Instead, we view the solution map $\R^{\partial^w R} \to \R^{R}$ given by \lref{extension} as a random linear operator.  We bound the right inverse of the solution map using an $\ep$-net in combination with a large deviations estimate.  Here we need that the rectangle $R_{[1,a],[1,b]}$ is extremely thin, requiring $a \geq C b^2 \log a$.

\begin{lemma}
\label{l.nice}
There are $\alpha > 1 > \ep > 0$ such that, if
\begin{enumerate}
\item $a \geq \alpha b^2 \log a \geq \alpha$
\item $F \subseteq \Z^2$ is $(\ep,-)$-sparse in $R_{[1,a],[1,b]}$
\item $v : F \to \{ 0, 1 \}$
\item $\mathcal E_{\mathrm{ni}}(R_{[1,a],[1,b]})$ denotes the event that
\begin{equation*}
\begin{cases}
|\lambda - \bar \lambda| \leq e^{- \alpha b \log a} \\
H \psi = \lambda \psi \mbox{ in } R_{[2,a-1],[2,b-1]} \\
|\psi| \leq 1 \mbox{ in } R_{[1,a],[1,2]} \\
|\psi| \leq 1 \mbox{ in a $1-\ep$ fraction of } R_{[1,a],[b-1,b]}
\end{cases}
\end{equation*}
implies $|\psi| \leq e^{\alpha b \log a}$ in $R_{[1,a],[1,b]}$,
\end{enumerate}
then $\P[\mathcal E_{\mathrm{ni}}(R_{[1,a],[1,b]}) | V_F = v] \geq 1 - e^{- \ep a}.$
\end{lemma}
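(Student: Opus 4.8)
The plan is to realize the solution map of \lref{extension} as a random linear operator and to bound its (approximate) right inverse, following the strategy indicated before the lemma: an $\ep$-net over the low-dimensional space of west-boundary data, together with a large deviations estimate for each net point. The thinness $a\ge\alpha b^2\log a$ is precisely what makes the net small enough to survive a union bound.

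\textbf{Reduction and the good event.} Fix $V_F=v$ and let $\psi$ be as in $\mathcal E_{\mathrm{ni}}(R_{[1,a],[1,b]})$. Decompose $\psi=\psi^{\mathrm b}+\psi^{\ell}$, where $\psi^{\mathrm b},\psi^{\ell}\colon R_{[1,a],[1,b]}\to\R$ are the extensions (\lref{extension}) of the restrictions of $\psi$ to $R_{[1,a],[1,2]}$ and to $R_{[1,2],[1,b]}$ respectively, each set to vanish on the complementary part of the west boundary. Since $|\psi|\le 1$ on $R_{[1,a],[1,2]}$, \lref{exponential} gives $\|\psi^{\mathrm b}\|_{\ell^\infty(R_{[1,a],[1,b]})}\le e^{Cb\log a}$, and likewise $\|\psi^{\ell}\|_{\ell^\infty(R_{[1,a],[1,b]})}\le e^{Cb\log a}\|\phi\|_{\ell^\infty}$ with $\phi:=\psi|_{R_{[1,2],[1,b]}}$; so the conclusion follows once we bound $\|\phi\|_{\ell^\infty}$ by $e^{\alpha b\log a/2}$. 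For an energy $\mu$, write $\psi^{\ell}_\mu(\phi)$ for the extension with left data $\phi$ and zero bottom data; set $\delta_0:=e^{-\alpha b\log a/8}$, and let $G$ be the event that for every $\phi$ with $\|\phi\|_{\ell^\infty}=1$ one has $|\{x\in R_{[1,a],[b-1,b]}:|\psi^{\ell}_{\bar\lambda}(\phi)_x|>2\delta_0\}|>2\ep\,|R_{[1,a],[b-1,b]}|$. On $G$: if $\|\phi\|_{\ell^\infty}>e^{\alpha b\log a/2}$ then $\psi^{\ell}(\phi)/\|\phi\|_{\ell^\infty}$ exceeds $2\delta_0$ on more than a $2\ep$ fraction of $R_{[1,a],[b-1,b]}$, while on the $1-\ep$ fraction where $|\psi|\le 1$ it is at most $(1+e^{Cb\log a})/\|\phi\|_{\ell^\infty}\le 2\delta_0$ for $\alpha$ large — impossible, since those two sets must intersect. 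Thus $\|\phi\|_{\ell^\infty}\le e^{\alpha b\log a/2}$ at energy $\bar\lambda$, and \lref{lambdadependence} (the window $e^{-\alpha b\log a}$ being small enough once $\alpha$ is large) transfers this to all $|\lambda-\bar\lambda|\le e^{-\alpha b\log a}$. It remains to prove $\P[G^c\mid V_F=v]\le e^{-\ep a}$.

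\textbf{Net and continuity.} Choose an $\eta$-net $\mathcal N$ of $\{\|\phi\|_{\ell^\infty}=1\}\subseteq\R^{R_{[1,2],[1,b]}}$ with $\eta:=\tfrac12\delta_0 e^{-Cb\log a}$; since $\dim\R^{R_{[1,2],[1,b]}}\asymp b$ (the $O(1)$ entries in $R_{[1,2],[1,2]}$ are forced to vanish), $|\mathcal N|\le (C/\eta)^{Cb}\le e^{Cb^2\log a}$. By \lref{exponential} the map $\phi\mapsto\psi^{\ell}_{\bar\lambda}(\phi)$ is $e^{Cb\log a}$-Lipschitz in the $\ell^\infty$-norm, so $G$ follows from the corresponding statement for $\phi$ ranging over $\mathcal N$ only (with $2\delta_0,2\ep$ relaxed to $3\delta_0,3\ep$). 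Hence it suffices to prove, for each fixed $\phi_0$ with $\|\phi_0\|_{\ell^\infty}=1$,
\begin{equation*}
\P[\, |\{x\in R_{[1,a],[b-1,b]}:|\psi^{\ell}_{\bar\lambda}(\phi_0)_x|>3\delta_0\}|\le 3\ep\,|R_{[1,a],[b-1,b]}| \mid V_F=v\,]\le e^{-ca}
\end{equation*}
with a universal $c$, because then $\P[G^c\mid V_F=v]\le|\mathcal N|e^{-ca}\le e^{-\ep a}$ exactly when $a\ge\alpha b^2\log a$.

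\textbf{The large deviations estimate.} Write $u:=\psi^{\ell}_{\bar\lambda}(\phi_0)$. A deterministic input first: applying \lref{exponential} to $R_{[1,s],[1,b]}$ in the east-to-west orientation (valid since \eref{tilted1} is symmetric under reversing the $s$-coordinate), and using that the bottom two rows of $R_{[1,s],[1,b]}$ carry zero data while $\|u|_{R_{[1,2],[1,b]}}\|_{\ell^\infty}=1$, gives $\|u|_{R_{[s-1,s],[1,b]}}\|_{\ell^\infty}\ge e^{-Cb\log a}$ for every $s$ — every pair of adjacent columns is ``hot''. Now cut the top rows $R_{[1,a],[b-1,b]}$ into $K\asymp a/b$ consecutive segments $I_1,\dots,I_K$, the $j$-th sitting over a sub-strip $S_j$ of width $\asymp b$, and reveal the free potential one strip at a time from left to right. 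Conditioned on everything to the left of $S_j$, the data of $u$ on a hot pair of columns at the west edge of $S_j$ is determined and of $\ell^\infty$-norm $\ge e^{-Cb\log a}$, and $u|_{S_j}$ is obtained from it by iterating \eref{tilted2} across the $\asymp b^2$ fresh coins of $S_j$; propagating this hot value diagonally to the top rows and running an iterated Littlewood--Offord / anticoncentration argument along a \emph{fattened} diagonal path (of bounded width, so that a single unfavourable coin cannot by itself collapse the running maximum — such a collapse requires an additional unlikely cancellation among the remaining terms of \eref{tilted2}) shows, for a strip in which $F$ has density at most $\sqrt\ep$, that $\P[\,|\{x\in I_j:|u_x|>3\delta_0\}|<c_0|I_j| \mid \text{strips left of }S_j\,]\le e^{-cb}$, for universal $c_0>0$; here the $(\ep,-)$-sparsity of $F$ is used to guarantee that on all but a small fraction of the $\asymp b$ diagonals $D^-_k=\{t=k\}$ crossed inside $S_j$ the path can be routed through a free coin, so the anticoncentration step is available. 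Since $(\ep,-)$-sparsity also forces all but an $O(\sqrt\ep)$ fraction of the $S_j$ to have $F$-density at most $\sqrt\ep$, a Chernoff bound over the $K\asymp a/b$ strips shows that, off an event of conditional probability $\le e^{-cbK}=e^{-ca}$, at least a $(1-c_1)$ fraction of the $I_j$ satisfy $|\{x\in I_j:|u_x|>3\delta_0\}|\ge c_0|I_j|$; for $\ep$ small this forces $u>3\delta_0$ on more than a $3\ep$ fraction of $R_{[1,a],[b-1,b]}$, as required.

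\textbf{Main obstacle.} The crux is the per-strip bound $e^{-cb}$: one must make rigorous that iterating \eref{tilted2} through a strip carrying a genuinely random potential cannot, except with probability exponentially small in $b$, drive $u$ below $\delta_0$ on a constant fraction of a horizontal segment — \emph{despite} the fact that any single coin taking its unfavourable value can make one entry of $u$ vanish. This forces one to track a bounded-dimensional window of $u$-values rather than a single value, to quantify the ``refreshing'' supplied by the off-diagonal terms of \eref{tilted2}, and to interleave the deterministic propagation bounds (\lref{extension}, \lref{exponential}) with the availability of free coins guaranteed by $(\ep,-)$-sparsity. Everything else — the reduction, the net count, the deterministic ``hot column'' bound, and the Chernoff bound over strips — is routine.
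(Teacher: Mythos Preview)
Your reduction to the good event $G$ and the $\ep$-net over southwest data are essentially the paper's, and are correct. The gap is exactly where you flag it: the per-strip large-deviations bound. Your ``hot column'' claim is correct (it follows from \lref{exponential} applied east-to-west), but it only says that \emph{some} entry of the column pair $\{s-1,s\}$ exceeds $e^{-Cb\log a}$, and the height of that entry depends on the potential already revealed. Converting this into anticoncentration on the top row via your proposed fattened-diagonal iterated Littlewood--Offord scheme is not a proof: the recursion \eref{tilted2} mixes several nearby values, a single coin can indeed annihilate one entry, and you have supplied no mechanism preventing the running window from being driven below $\delta_0$ with probability $\gg e^{-cb}$. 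You acknowledge this as the ``main obstacle'', but it is in fact the entire content of the lemma; everything else is soft.

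The paper bypasses the strip decomposition via two ideas you are missing. First, a \emph{deterministic} claim: given $\phi_0$, choose $t_0\in[3,b]$ maximizing $a^{-\beta t}|\phi_0(\cdot,t)|$ over the southwest boundary (so $t_0$ depends only on $\phi_0$, not on $V$); a short induction then shows $|u_{s,t_0}|\ge \tfrac12\|\phi_0\|_\infty\, e^{-Cb\log a}$ for \emph{every} $s\in[1,a]$ and \emph{every} realization of $V$. Second, an alternating-sum telescope of \eref{tilted1} down column $s_1-1$ gives
\[
u_{s_1,t_1}=-u_{s_1-2,t_1}+\sum_{k\ge 0}(-1)^k(4-\bar\lambda+V_{s_1-1,t_1-1-2k})\,u_{s_1-1,t_1-1-2k},
\]
in which every $u$-value on the right is determined by $V$ on columns $\le s_1-2$, so the only dependence on the fresh coin $V_{s_1-1,t_0}$ is through the single term $\pm u_{s_1-1,t_0}$. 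Since $|u_{s_1-1,t_0}|\ge e^{-Cb\log a}$ \emph{always}, one gets $\P[\,|u_{s_1,t_1}|\ge e^{-Cb\log a}\mid V_{F\cup R_{[1,s_1-2],[1,b]}}\,]\ge\tfrac12$ column by column. Because $t_0$ is fixed, $(\ep,-)$-sparsity guarantees $(s_1-1,t_0)\notin F$ for all but an $\ep$-fraction of the $\asymp a$ columns, and Azuma over those columns gives $e^{-ca}$ directly --- one coin per column, no strips.
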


\begin{proof}
Let $\mathcal E'_{\mathrm{ni}}(R_{[1,a],[1,b]})$ denote the event that
\begin{equation*}
\begin{cases}
H \psi = \bar \lambda \psi \mbox{ in } R_{[2,a-1],[2,b-1]} \\
\psi = 0 \mbox{ in } R_{[1,a],[1,2]} \\
\max_{R_{[1,2],[1,b]}} |\psi| \geq 1
\end{cases}
\end{equation*}
implies $|\psi| \geq e^{-\frac12 \alpha b \log a}$ in a $2\ep$ fraction of $R_{[1,a],[b-1,b]}$.

\begin{claim}
If $\alpha \geq C$, then $\mathcal E_{\mathrm{ni}} \supseteq \mathcal E'_{\mathrm{ni}}$.
\end{claim}

Suppose $|\lambda - \bar \lambda| \leq e^{-\alpha b \log a}$, $H \psi = \lambda \psi$ in $R_{[2,a-1],[2,b-1]}$, $|\psi| \leq 1$ in $R_{[1,a],[1,2]}$, and $|\psi| \leq 1$ in a $1-\ep$ fraction of $R_{[1,a],[b-1,b]}$.  By \lref{extension}, there are unique $\psi', \psi'' : R_{[1,a],[1,b]} \to \R$ such that
\begin{equation*}
\begin{cases}
H \psi' = \lambda \psi' & \mbox{in } R_{[2,a-1],[2,b-1]} \\
\psi' = 0 & \mbox{in } R_{[1,a],[1,2]} \\
\psi' = \psi & \mbox{in } R_{[1,2],[3,b]}
\end{cases}
\end{equation*}
and
\begin{equation*}
\begin{cases}
H \psi'' = \bar \lambda \psi'' & \mbox{in } R_{[2,a-1],[2,b-1]} \\
\psi'' = 0 & \mbox{in } R_{[1,a],[1,2]} \\
\psi'' = \psi & \mbox{in } R_{[1,2],[3,b]}.
\end{cases}
\end{equation*}
Assuming $\mathcal E'_{\mathrm{ni}}$ holds, we see that $|\psi''| \geq e^{- \frac12 \alpha b \log a} \max_{R_{[1,2],[3,b]}} |\psi|$ in a $2\ep$ fraction of $R_{[1,a],[b-1,b]}$.  Since \lref{lambdadependence} gives
\begin{equation*}
\max_{R_{[1,a],[1,b]}} |\psi' - \psi''| \leq e^{(C - \alpha) b \log a} \max_{R_{[1,2],[3,b]}} |\psi|,
\end{equation*}
we see that
\begin{equation*}
\begin{aligned}
|\psi'| & \geq |\psi''| - |\psi' - \psi''| \\
& \geq (e^{-\frac12 \alpha b \log a} - e^{(C - \alpha) b \log a}) \max_{R_{[1,2],[3,b]}} |\psi| \\
& \geq \tfrac12 e^{-\tfrac12 \alpha b \log a} \max_{R_{[1,2],[3,b]}} |\psi| \\
\end{aligned}
\end{equation*}
in a $2\ep$ fraction of $R_{[1,a],[b-1,b]}$.  Since \lref{exponential} gives\begin{equation*}
\max_{R_{[1,a],[1,b]}} |\psi' - \psi| \leq e^{C b \log a},
\end{equation*}
we obtain that $|\psi| \geq \tfrac12 e^{- \frac12 \alpha b \log a} \max_{R_{[1,2],[3,b]}} |\psi| - e^{C b \log a}$ in a $2 \ep$ fraction of $R_{[1,a],[b-1,b]}$.  Since $|\psi| \leq 1$ on a $1-\ep$ fraction of $R_{[1,a],[b-1,b]}$, it follows that $\max_{R_{[1,2],[3,b]}} |\psi| \leq e^{(C + \frac12 \alpha)b \log a}$.  Another application of \lref{exponential} gives $|\psi| \leq e^{(C + \frac12 \alpha) b \log a}$ in $R_{[1,a],[1,b]}$.  In particular, we see that $\mathcal E'_{\mathrm{ni}}$ implies $\mathcal E_{\mathrm{ni}}$.

We now estimate the probability that $\mathcal E'_{\mathrm{ni}}$ holds.  Recall that, if $H \psi = \bar \lambda \psi$ in $R_{[2,a-1],[2,b-1]}$ and $\psi = 0$ in $R_{[1,a],[1,2]}$, then the values of $\psi$ on the whole tilted rectangle $R_{[1,a],[1,b]}$ are determined by the potential $V$ and the values of $\psi$ on the southwest boundary $R_{[1,2],[3,b]}$.  Let us write $\psi^0$ and $\psi^1$ for the restriction of $\psi$ to $R_{[1,2],[3,b]}$ and $R_{[1,a],[b-1,b]}$, respectively.  We prove the lemma by studying the random linear mapping $\psi^0 \mapsto \psi^1$.  Our goal is to show that, with high probability, if $|\psi^0| \geq 1$ on at least one site, then $|\psi^1| \geq e^{- \alpha b \log a}$ on an $\ep$ fraction of its domain.  In particular, the lemma follows from \clref{inversebound} below.

\begin{claim}
For any $\psi^0 : R_{[1,2],[3,b]} \to \R$, there is $(s_0,t_0) \in R_{[1,2],[3,b]}$ such that
\begin{equation*}
|\psi| \geq e^{-C b \log a} \| \psi^0 \|_\infty \quad \mbox{in } R_{[1,a],[t_0,t_0]}
\end{equation*}
holds for all choices of potential $V$.  Note that $s_0 \in [1,2]$ records the parity of $t_0$.
\end{claim}

For $\beta \geq 1$ large universal to be determined, let $(s_0,t_0) \in R_{[1,2],[3,b]}$ maximize $a^{-\beta t_0} |\psi_{s_0,t_0}|$.  Note that it is enough to prove $|\psi| \geq 1/2$ on $R_{[1,a],[t_0,t_0]}$ under the assumption $\psi_{s_0,t_0} = 1.$ For $t \in [1,t_0]$, let $m_t = \| \psi \|_{\ell^\infty(R_{[1,a],[1,t]})}$.  Using \eref{tilted2}, we observe that, if $(s,t) \in R_{[3,a],[3,t_0]}$, then
\begin{equation*}
|\psi_{s,t}| \leq |\psi_{s-2,t}| + C m_{t-1} + C m_{t-2}.
\end{equation*}
Since $|\psi_{s,t}| \leq a^{\beta (t - t_0)}$ for $(s,t) \in R_{[1,2],[3,t_0]}$, induction gives
\begin{equation*}
m_t \leq a^{\beta (t-t_0)} + C a m_{t-1} \quad \mbox{for } t \geq 3.
\end{equation*}
Since $m_1 = m_2 = 0$, assuming $\beta \geq 1$ large gives $m_t \leq 2 a^{\beta (t - t_0)} |\psi_{s_0,t_0}|$.  Using \eref{tilted2} again, we observe that, if $(s,t_0) \in R_{[3,a],[t_0,t_0]}$, then
\begin{equation*}
|\psi_{s,t_0}| \geq |\psi_{s-2,t_0}| - C a^{-\beta} |\psi_{s_0,t_0}|.
\end{equation*}
By $\psi_{s_0,t_0} = 1$ and induction, we obtain $|\psi| \geq 1 - C a^{1-\beta}$ in $R_{[1,a],[t_0,t_0]}$.   Assuming $\beta \geq 1$ large gives $|\psi| \geq 1/2$ in $R_{[1,a],[t_0,t_0]}.$

\begin{figure}
\begin{tikzpicture}[scale=1/2,rotate=45,yscale=-1]
\draw (9,13) node[above] {$(s_1,t_1)$};
\draw (8,6) node[above] {$(s_1-1,t_0)$};
\draw (2,6) node[above] {$(s_0,t_0)$};
\draw (1,1) node {$\circ$};
\draw (1,3) node {$\circ$};
\draw (1,5) node {$\circ$};
\draw (1,7) node {$\circ$};
\draw (1,9) node {$\circ$};
\draw (1,11) node {$\circ$};
\draw (1,13) node {$\circ$};
\draw (2,2) node {$\circ$};
\draw (2,4) node {$\circ$};
\draw (2,6) node {$\bullet$};
\draw (2,8) node {$\circ$};
\draw (2,10) node {$\circ$};
\draw (2,12) node {$\circ$};
\draw (3,1) node {$\circ$};
\draw (3,3) node {$\circ$};
\draw (3,5) node {$\circ$};
\draw (3,7) node {$\circ$};
\draw (3,9) node {$\circ$};
\draw (3,11) node {$\circ$};
\draw (3,13) node {$\circ$};
\draw (4,2) node {$\circ$};
\draw (4,4) node {$\circ$};
\draw (4,6) node {$\circ$};
\draw (4,8) node {$\circ$};
\draw (4,10) node {$\circ$};
\draw (4,12) node {$\circ$};
\draw (5,1) node {$\circ$};
\draw (5,3) node {$\circ$};
\draw (5,5) node {$\circ$};
\draw (5,7) node {$\circ$};
\draw (5,9) node {$\circ$};
\draw (5,11) node {$\circ$};
\draw (5,13) node {$\circ$};
\draw (6,2) node {$\circ$};
\draw (6,4) node {$\circ$};
\draw (6,6) node {$\circ$};
\draw (6,8) node {$\circ$};
\draw (6,10) node {$\circ$};
\draw (6,12) node {$\circ$};
\draw (7,1) node {$\bullet$};
\draw (7,3) node {$\circ$};
\draw (7,5) node {$\circ$};
\draw (7,7) node {$\circ$};
\draw (7,9) node {$\circ$};
\draw (7,11) node {$\circ$};
\draw (7,13) node {$\bullet$};
\draw (8,2) node {$\bullet$};
\draw (8,4) node {$\bullet$};
\draw (8,6) node {$\bullet$};
\draw (8,8) node {$\bullet$};
\draw (8,10) node {$\bullet$};
\draw (8,12) node {$\bullet$};
\draw (9,1) node {$\bullet$};
\draw (9,3) node {$\circ$};
\draw (9,5) node {$\circ$};
\draw (9,7) node {$\circ$};
\draw (9,9) node {$\circ$};
\draw (9,11) node {$\circ$};
\draw (9,13) node {$\bullet$};
\draw (10,2) node {$\circ$};
\draw (10,4) node {$\circ$};
\draw (10,6) node {$\circ$};
\draw (10,8) node {$\circ$};
\draw (10,10) node {$\circ$};
\draw (10,12) node {$\circ$};
\draw (11,1) node {$\circ$};
\draw (11,3) node {$\circ$};
\draw (11,5) node {$\circ$};
\draw (11,7) node {$\circ$};
\draw (11,9) node {$\circ$};
\draw (11,11) node {$\circ$};
\draw (11,13) node {$\circ$};
\draw (12,2) node {$\circ$};
\draw (12,4) node {$\circ$};
\draw (12,6) node {$\circ$};
\draw (12,8) node {$\circ$};
\draw (12,10) node {$\circ$};
\draw (12,12) node {$\circ$};
\draw (13,1) node {$\circ$};
\draw (13,3) node {$\circ$};
\draw (13,5) node {$\circ$};
\draw (13,7) node {$\circ$};
\draw (13,9) node {$\circ$};
\draw (13,11) node {$\circ$};
\draw (13,13) node {$\circ$};
\draw (14,2) node {$\circ$};
\draw (14,4) node {$\circ$};
\draw (14,6) node {$\circ$};
\draw (14,8) node {$\circ$};
\draw (14,10) node {$\circ$};
\draw (14,12) node {$\circ$};
\draw (15,1) node {$\circ$};
\draw (15,3) node {$\circ$};
\draw (15,5) node {$\circ$};
\draw (15,7) node {$\circ$};
\draw (15,9) node {$\circ$};
\draw (15,11) node {$\circ$};
\draw (15,13) node {$\circ$};
\end{tikzpicture}
\caption{A schematic for the proof of \clref{martingale}}
\label{f.martingale}
\end{figure}

\begin{claim}
\label{cl.martingale}
For any fixed $\psi^0 : R_{[1,2],[1,b]} \to \R$,
\begin{equation*}
\P[|\{|\psi^1| \geq e^{- C b \log a} \| \psi^0 \|_\infty \}| \geq \ep a | V_F = v] \geq 1 - e^{-c a}.
\end{equation*}
\end{claim}

Select $(s_0,t_0) \in R_{[1,2],[3,b]}$ as in the previous claim.  Suppose for the moment that $(s_1,t_1) \in R_{[3,a],[b-1,b]}$, and $(s_1-1,t_0) \in R_{[2,a],[t_0,t_0]} \setminus F$.  Forming the alternating sum of the equation \eref{tilted1} at the points $(s,t_1-1) \in R_{[3,a],[1,s_1]}$ and using $\psi = 0$ on $R_{[1,a],[1,2]}$, we obtain
\begin{equation*}
\psi_{s_1,t_1} = - \psi_{s_1-2,t_1} + \sum_{0 \leq k \leq \frac{t_1-1}{2}} (-1)^k (4 - \bar \lambda + V_{s_1-1,t_1-1-2k}) \psi_{s_1-1,t_1-1-2k}.
\end{equation*}
See \fref{martingale} for a schematic of this computation.

Since the values $\psi_{s_1-1,t_1 - 1 - 2k}$ depend only on $\psi^0$ and the potential $V$ on the tilted rectangle $R_{[1,s_1-2],[1,b]}$, we see that $\psi_{s_1,t_1}$ depends on $V_{s_1-1,t_0}$ only through the term
\begin{equation*}
(-1)^{t_1-t_0-1} (4 - \bar \lambda + V_{s_1-1,t_0}) \psi_{s_1-1,t_0}.
\end{equation*}
Since $|\psi_{s_1-1,t_0}| \geq e^{-C b \log a} \|\psi^0\|_\infty$ holds almost surely, we obtain
\begin{equation*}
\P[|\psi_{s_1,t_1}| \geq e^{-C b \log a} \| \psi^0 \|_\infty | V_{F \cup R_{[1,s_1-2],[1,b]}}] \geq 1/2.
\end{equation*}
That is, $\psi_{s_1,t_1}$ is sensitive to the variation of $V_{s_1-1,t_0}$.

Now, let $s_1, ..., s_K \in [1,a]$ be an increasing list of all the integers $s_k \in [1,a]$ such that $(s_k,t_1) \in R_{[3,a],[b-1,b]}$ such that $(s_k-1,t_0) \in R_{[2,a],[t_0,t_0]} \setminus F$.  Since $F$ is $(\ep,-)$-sparse, there are at least $K \geq (\tfrac12 - \ep) a - 3 \geq c a$ such integers.  Let $\mathcal F_k$ denote the $\sigma$-algebra generated by $V_{F \cup R_{[1,s_k-1],[1,b]}}$.  By the above, we see that $\psi_{s_k,t_1}$ is $\mathcal F_k$-measurable and that
\begin{equation*}
\P[|\psi_{s_k,t_1}| \geq e^{-C b \log a} \| \psi^0 \|_\infty | \mathcal F_{k-1}] \geq 1/2.
\end{equation*}
Applying Azuma's inequality to the sum $\sum_k \id_{|\psi_{s_k,t_1}| \geq e^{-C b \log a} \| \psi^0 \|_\infty}$ of indicator functions, we see that $$\P[\#\{ k : |\psi_{s_k,t_1}| \geq e^{-C b \log a} \| \psi^0 \|_\infty \} \leq \ep n] \leq e^{- c (c - \ep) a}$$
Therefore, we conclude by assuming $\ep > 0$ is small.

\begin{claim}
\label{cl.inversebound}
If $X$ denotes the space of $\psi^0 : R_{[1,2],[3,b]} \to \R$ with $\| \psi^0 \|_\infty = 1$, then
\begin{equation*}
\P[\inf_{\psi^0 \in X} |\{|\psi^1| \geq e^{- C b \log a} \}| \geq \ep a | V_F = v] \geq 1 - e^{-\ep a}.
\end{equation*}
\end{claim}

For $\beta \geq 1$ large universal to be determined, we can choose a finite subset $\tilde X \subseteq X$ such that $|\tilde X| \leq e^{C \beta b^2 \log a}$ and, for any $\psi^0 \in X$, there is a $\tilde \psi^0 \in \tilde X$ with $\|\psi^0 - \tilde \psi^0\|_\infty \leq e^{-2 \beta b \log a}$.  By \lref{exponential}, $\| \psi^0 - \tilde \psi^0 \|_\infty \leq e^{- 2 \beta b \log a}$ implies $\| \psi^1 -  \tilde \psi^1 \|_\infty \leq e^{(C - 2 \beta) b \log a}$.  In particular, we have
\begin{equation*}
\inf_{\psi^0 \in X} |\{|\psi^1| \geq e^{- \beta b \log a} - e^{(C - 2\beta) b \log a} \}| \geq \min_{\tilde \psi^0 \in \tilde X} |\{ |\tilde \psi^1| \geq e^{- \beta b \log a} \}|
\end{equation*}
By the previous claim and a union bound,
\begin{equation*}
\P[\min_{\tilde \psi^0 \in \tilde X} |\{ |\tilde \psi^1| \geq e^{- \beta b \log a} \}| \geq \ep a | V_F = v] \geq 1 - e^{C \beta b^2 \log a - c a},
\end{equation*}
holds provided $\beta \geq 1$ sufficiently large.  Assuming $a \geq C \beta b^2 \log a$, we obtain the claim.
\end{proof}

\subsection{Growth lemma}

Using our key lemma, we prove a ``growth lemma'' suitable for use in a Calderon--Zygmund stopping time argument.  Our proof is similar to that of \cite{Buhovsky-Logunov-Malinnikova-Sodin}*{Lemma 3.6} except that we are forced to use extremely thin rectangles.  This leads to large support on only $\ell(Q)^{3/2-\ep}$ many points.

\begin{lemma}
\label{l.excellent}
For every small $\ep > 0$, there is a large $\alpha > 1$ such that, if
\begin{enumerate}
\item $Q$ tilted square with $\ell(Q) \geq \alpha$
\item $F \subseteq \Z^2$ is $\ep$-sparse in $2Q$
\item $v : F \to \{ 0, 1 \}$
\item $\mathcal E_{\mathrm{ex}}(Q,F)$ denotes the event that
\begin{equation*}
\begin{cases}
|\lambda - \bar \lambda| \leq e^{-\alpha (\ell(Q) \log \ell(Q))^{1/2}} \\
H \psi = \lambda \psi \mbox{ in } 2Q \\
|\psi| \leq 1 \mbox{ in } \tfrac12 Q \\
|\psi| \leq 1 \mbox{ in a $1-\ep (\ell(Q) \log \ell(Q))^{-1/2}$ fraction of $2Q \setminus F$}
\end{cases}
\end{equation*}
implies $|\psi| \leq e^{\alpha \ell(Q) \log \ell(Q)}$ in $Q$.
\end{enumerate}
then $\P[\mathcal E_{\mathrm{ex}}(Q,F) | V_F = v] \geq 1 - e^{-\ep \ell(Q)}.$
\end{lemma}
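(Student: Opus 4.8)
Write $\ell = \ell(Q)$, and let $\alpha_0 > 1 > \ep_0 > 0$ be the constants furnished by \lref{nice}. The plan is to cover $Q$ by a moderately large family of extremely thin tilted rectangles and to propagate a bound on $\psi$ outward from $\tfrac12 Q$, applying \lref{nice} once per rectangle. Since \lref{nice} applies to $R_{[1,a],[1,b]}$ only when $a \geq \alpha_0 b^2\log a$, taking the long side $a$ of order $\ell$ forces the short side to be of order $b_* := c\,(\ell/\log\ell)^{1/2}$; this single choice is what produces the exponent $3/2$ in the conclusion. A rectangle of dimensions roughly $\ell\times b_*$ has area $\approx \ell^{3/2}(\log\ell)^{-1/2}$, so $N := \Theta((\ell\log\ell)^{1/2})$ of them suffice to cover $Q$. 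Each such rectangle lies in $2Q$, so $H\psi = \lambda\psi$ holds on its interior by hypothesis; the bound $|\lambda-\bar\lambda| \leq e^{-\alpha(\ell\log\ell)^{1/2}}$ implies the bound $|\lambda-\bar\lambda| \leq e^{-\alpha_0 b\log a}$ needed by \lref{nice} once $\alpha$ exceeds a fixed multiple of $\alpha_0$; and since $F$ is $\ep$-sparse in $2Q$, its restriction to any such rectangle is $C\ep$-sparse in the relevant diagonal direction, which lets us invoke \lref{nice} (after the lattice isometry putting the rectangle in standard position) with $\ep_0$ in place of $\ep$ provided $\ep$ is small.

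Now set up the sweep. Starting from $\tfrac12 Q$, where $|\psi| \leq 1$, I would chain thin rectangles $R^{(1)}, R^{(2)}, \dots$ so that the south strip $R^{(i)}_{[1,a],[1,2]}$ lies in the region already controlled (which is $\tfrac12 Q$ at first and grows to absorb $R^{(1)}, \dots, R^{(i-1)}$), and so that the north strip $R^{(i)}_{[1,a],[b-1,b]}$, where \lref{nice} needs control only on a $1-\ep_0$ fraction, lies on a pair of diagonals carrying at most an $\ep_0$ fraction of sites where $|\psi|>1$ or where $F$ intrudes. This is where the fraction $\ep(\ell\log\ell)^{-1/2}$ is used: at most $\ep(\ell\log\ell)^{-1/2}|2Q| \approx \ep\,\ell^{3/2}(\log\ell)^{-1/2}$ sites of $2Q\setminus F$ have $|\psi|>1$, so at most $O(\ep/\ep_0)(\ell/\log\ell)^{1/2} \ll b_*$ diagonals are ``bad'', and hence every window of $b_*$ consecutive diagonals contains an adjacent good pair, so the sweep can always advance by a step of size between $b_*/2$ and $b_*$. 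Applying \lref{nice} to $R^{(i)}$ after rescaling $\psi$ by the already-accumulated factor $e^{\Lambda_{i-1}}$, where $\Lambda_{i-1} := \sum_{j<i}\alpha_0 b_j\log a_j$ — which turns $|\psi| \leq e^{\Lambda_{i-1}}$ on the south strip into $|\tilde\psi| \leq 1$ and only strengthens the north-strip hypothesis — upgrades the control to $|\psi| \leq e^{\Lambda_i}$ on all of $R^{(i)}$. After the $N = \Theta((\ell\log\ell)^{1/2})$ steps that fill out $Q$ (sweeping in the four tilted directions and then into its four corners), the bound on $Q$ is $e^{\Lambda_N}$ with $\Lambda_N \leq N\cdot\alpha_0\max_j(b_j\log a_j) \leq C\alpha_0\ell\log\ell \leq \alpha\ell\log\ell$ for $\alpha$ a large enough multiple of $\alpha_0$. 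This is exactly $\mathcal E_{\mathrm{ex}}(Q,F)$.

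For the probability, note that the construction only ever uses rectangles from a fixed family of at most $\ell^{O(1)}$ thin rectangles inside $2Q$. By \lref{nice} applied to each member of this family and a union bound, the event that the conclusion of \lref{nice} holds simultaneously for every member has conditional probability, given $V_F = v$, at least $1 - \ell^{O(1)}e^{-\ep_0 a} \geq 1 - \ell^{O(1)}e^{-c\ep_0\ell} \geq 1 - e^{-\ep\ell(Q)}$; on that event the deterministic sweep above goes through for every admissible $\psi$, giving $\P[\mathcal E_{\mathrm{ex}}(Q,F) \mid V_F = v] \geq 1 - e^{-\ep\ell(Q)}$.

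The main obstacle is the simultaneous calibration of the parameters. The width $b_* \approx (\ell/\log\ell)^{1/2}$ is forced by \lref{nice}, which pins the step count at $N \approx (\ell\log\ell)^{1/2}$; the product $N\cdot\alpha_0 b_*\log\ell$ must still come out $\leq \alpha\ell\log\ell$, so $\alpha$ must dominate $\alpha_0$ by a universal factor, while $\ep$ must be small relative to both $\ep_0$ and $\alpha$ — both so that the set of bad diagonals stays sparse enough that a good pair is always within reach of the sweep, and so that the $F$-sites together with the sites where $|\psi|>1$ never fill more than an $\ep_0$ fraction of a north strip. The geometric bookkeeping of the sweep (keeping every rectangle inside $2Q$, keeping each south strip inside the already-controlled region, and reaching the corners of $Q$ without inflating the step count, processing the rectangles in a valid order) is routine but must be carried out with some care.
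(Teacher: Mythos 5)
Your proposal is correct and follows essentially the same route as the paper: reduce to a one-directional sweep through thin tilted rectangles of width $b_* \approx (\ell/\log\ell)^{1/2}$ (the maximal width allowed by \lref{nice}), use the counting hypothesis to guarantee a good pair of diagonals in every window of $b_*$ consecutive ones, apply \lref{nice} step by step with $\approx (\ell\log\ell)^{1/2}$ steps to accumulate a total factor $e^{O(\ell\log\ell)}$, and union-bound over the polynomially many admissible rectangles. The paper implements the sweep via a concrete sequence $a = b_0 \leq \cdots \leq b_K \geq 2a$ inside the fixed geometry $R_{[1,a],[1,a]} \subseteq 4R_{[1,a],[1,a]}$ and invokes $90^\circ$ symmetry plus a covering argument at the end, but these are the same bookkeeping choices you flagged as routine; the substance matches.
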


\begin{proof}
For some large $a \geq C$, let $\mathcal E_{\mathrm{ex}}'$ denote the event that
\begin{equation}
\label{e.exprimeh}
\begin{cases}
|\lambda - \bar \lambda| \leq e^{-\alpha (a \log a)^{1/2}} \\
H \psi = \lambda \psi \mbox{ in } 4 R_{[1,a],[1,a]} \\
|\psi| \leq 1 \mbox{ in } R_{[1,a],[1,a]} \\
|\psi| \leq 1 \mbox{ in a $(1-\ep (a \log a)^{-1/2})$ fraction of $4 R_{[1,a],[1,a]} \setminus F$}
\end{cases}
\end{equation}
implies $|\psi| \leq e^{\alpha a \log a}$ in $R_{[1,a],[1,2a]}$.  By the $90^\circ$ symmetry of our problem and a covering argument, it is enough to prove that, for every small $\ep > 0$, there is a large $\alpha > 1$ such that $\P[\mathcal E_{\mathrm{ex}}' | V_F = v] \geq 1 - e^{-\ep a}$.

\begin{figure}
\begin{tikzpicture}[scale=1/8,rotate=45]
\draw (-16,-16) node[below] {$4 R_{[1,a],[1,a]}$} rectangle (16,16);
\draw (-4,-4) node[below] {$R_{[1,a],[1,a]}$} rectangle (4,4);
\draw (4,-4) rectangle (12,4);
\foreach \x in {4,5,...,12} {
\draw (\x,-4) -- (\x,4);
}
\draw[->,>=latex,dotted] (9.5,-30) node[below] {$R_{[1,a],[b_k-1,b_{k+1}]}$} -- (9.5,-4);
\end{tikzpicture}
\caption{A schematic for the proof of \lref{excellent}}
\label{f.excellent}
\end{figure}

Let $\alpha' > 1 > \ep' > 0$ denote a valid pair of constants from \lref{nice}.  By a union bound, the event\begin{equation*}
\mathcal E_{\mathrm{ni}} = \bigcap_{\substack{[c,d] \subseteq [1,\frac52 a] \\ \alpha (d - c + 1)^2 \log a \leq a}} \mathcal E_{\mathrm{ni}}(R_{[1,a],[c,d]})
\end{equation*}
satisfies $\P[\mathcal E_{\mathrm{ni}} | V_F = v] \geq 1 - e^{-\ep' a + C \log a}$.  It suffices to prove that, for all $\ep \in (0,c \ep' (\alpha')^{-1/2})$, there is a large $\alpha > \alpha'$ such that $\mathcal E_{\mathrm{ex}}' \supseteq \mathcal E_{\mathrm{ni}}$.  Henceforth we assume that $\mathcal E_{\mathrm{ni}}$ and \eref{exprimeh} hold.  Our goal is to show that $|\psi| \leq e^{\alpha a \log a}$ in $R_{[1,a],[1,2a]}$.

\begin{claim}
There is a sequence $b_0 \leq \cdots \leq b_K \in [a,\tfrac52 a]$ such that
\begin{enumerate}
\item $b_0 = a$
\item $b_K \geq 2 a$
\item $\tfrac12 a \leq \alpha' (b_{k+1} - b_k + 2)^2 \log a \leq a$ for $0 \leq k < K$
\item $|\psi| \leq 1$ on a $1- \ep'$ fraction of $R_{[1,a],[b_{k+1}-1,b_{k+1}]}$ for $0 \leq k < K$
\end{enumerate}
\end{claim}

Let $B$ denote the largest even integer such that $\alpha' B^2 \log a \leq a$. Assume that $b_k$ is already defined.  Decompose the rectangle $R_{[1,a],[b_k+B/2,b_k+B]}$ as a disjoint union of diagonals,
\begin{equation*}
R_{[1,a],[b_k+B/2,b_k+B]} = \dot\bigcup_{b \in [b_k+B/2,b_k+B]} R_{[1,a],[b,b]}.
\end{equation*}
By hypothesis, $|\psi| > 1$ on an at most $\ep a^{3/2} (\log a)^{-1/2}$ many points in $4 R_{[1,a],[1,a]} \setminus F$. On the other hand, there are at least $c B a \geq c a^{3/2} (\alpha' \log a)^{-1/2}$ many points in $R_{[1,a],[b_k+B/2,b_k+B]}$ and at most $\ep c B a \leq C \ep \alpha^{3/2} (\alpha' \log a)^{-1/2}$ many points in $R_{[1,a],[b_k+B/2,b_k+B]} \cap F$.  It follows that, since $\ep \leq c \ep' (\alpha')^{-1/2}$, there is a $b_{k+1} \in [b_k+B/2,b_k+B]$ such that $|\psi| \leq 1$ on a $1 - \ep'$ fraction of $R_{[1,a],[b_{k+1}-1,b_{k+1}]}.$

With the claim in hand, we apply $\mathcal E_{\mathrm{ni}}(R_{[1,a],[b_k-1,b_{k+1}]})$ to conclude
\begin{equation*}
\| \psi \|_{\ell^\infty(R_{[1,a],[b_k-1,b_{k+1}]})} \leq e^{\alpha' (b_{k+1} - b_k) \log a} (1 + \| \psi \|_{\ell^\infty(R_{[1,a],[b_k-1,b_k]})}).
\end{equation*}
By induction, we obtain $\|\psi\|_{\ell^\infty(R_{[1,a],[1,2a]})} \leq (e^{C \alpha' B \log a})^{C a / B} \leq e^{\alpha a \log a}$.
\end{proof}

\subsection{Covering argument}

\tref{continuation} is proved using a Calderon--Zygmund stopping time argument.  This is a random version of \cite{Buhovsky-Logunov-Malinnikova-Sodin}*{Proposition 3.9}.

\begin{proof}[Proof of \tref{continuation}]
For $Q \subseteq \Z^2$ a tilted square and $\beta > 1 > \delta > 0$, let $\mathcal E_{\mathrm{uc}}'(Q,F)$ denote the event that
\begin{equation}
\label{e.ucprimeh}
\begin{cases}
|\lambda - \bar \lambda| \leq e^{-\beta (\ell(Q) \log \ell(Q))^{1/2}} \\
H \psi = \lambda \psi \mbox{ in } Q \\
|\psi| \leq 1 \mbox{ in a $1-\delta (\ell(Q) \log(Q))^{-1/2}$ fraction of $Q \setminus F$},
\end{cases}
\end{equation}
implies $|\psi| \leq e^{\beta \ell(Q) \log \ell(Q)}$ in $\tfrac1{64} Q$.  By a covering argument, it is enough to prove that, if $F$ is $\delta$-regular in $Q$ and $\ell(Q) \geq \beta$, then $\P[\mathcal E_{\mathrm{uc}}' | V_F = v] \geq 1 - e^{-\delta \ell(Q)^{1/4}}$.  Indeed, for any square $Q \subseteq \Z^2$ we can find a list of tilted squares $Q_1, ..., Q_N \subseteq Q$ such that $\frac{1}{2} Q \subseteq \cup_k \frac{1}{64} Q_k$, $N \leq C$, and $\min_k \ell(Q_k) \geq c \ell(Q)$.  Now, if the conclusion of the theorem holds for each $Q_k$ then it is also true for $Q$.

Let $\alpha > 1 > \ep > 0$ denote a valid pair of constants for \lref{excellent}.  We may assume $\ep^2 > \delta$ and $\beta \geq 2 \alpha$.  Let $\mathcal Q$ denote the set of tilted squares $Q' \subseteq Q$ such that 
\begin{enumerate}
\item $\ell(Q') \geq \ell(Q)^{1/4}$
\item $2Q' \subseteq \tfrac12 Q$
\item $F$ is $\ep$-sparse in $2Q'$
\item $\tfrac14 Q' \cap \tfrac{1}{64} Q \neq \emptyset$.
\end{enumerate}
By a union bound, the event
\begin{equation*}
\mathcal E_{\mathrm{ex}} = \bigcap_{Q' \in \mathcal Q} \mathcal E_{\mathrm{ex}}(Q',F) \cap \mathcal E_{\mathrm{ex}}(2Q',F)
\end{equation*}
satisfies $\P[\mathcal E_{\mathrm{ex}} | V_F = v] \geq 1 - e^{-\ep \ell(Q)^{1/4} + C \log \ell(Q)} \geq 1 - e^{-\delta \ell(Q)^{1/4}}$.  Thus, it suffices to prove $\mathcal E'_{\mathrm{uc}}(Q,F) \supseteq \mathcal E_{\mathrm{ex}}$.

Henceforth we assume that $\mathcal E_{\mathrm{ex}}$ and \eref{ucprimeh} hold.  Our goal is to prove $|\psi| \leq e^{\beta \ell(Q) \log \ell(Q)}$ in $\tfrac{1}{64} Q$.  Let $\mathcal Q_{\mathrm{g}} \subseteq \mathcal Q$ denote the set of tilted squares $Q' \in \mathcal Q$ such that
\begin{equation*}
\|\psi\|_{\ell^\infty(Q')} \leq e^{\beta \ell(Q') \log \ell(Q')}
\end{equation*}
Let $\mathcal Q_{\mathrm{mg}} \subseteq \mathcal Q_{\mathrm{g}}$ denote the $Q' \in \mathcal Q_{\mathrm{g}}$ that are maximal with respect to inclusion.

\begin{claim}
\label{cl.alternatives}
If $Q' \in \mathcal Q_{\mathrm{mg}}$, then one of the following holds.
\begin{enumerate}
\item $4Q' \nsubseteq \tfrac12 Q$
\item $F$ is not $\delta$-sparse in $4Q'$
\item $|\{ |\psi| \geq 1 \} \cap 4Q'| \geq \ep (\ell(Q) \log \ell(Q))^{-1/2} |4Q'|.$
\end{enumerate}
\end{claim}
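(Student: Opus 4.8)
The plan is to argue by contradiction: I will show that if $Q' \in \mathcal Q_{\mathrm{mg}}$ satisfies none of the three alternatives, then the concentric doubling $2Q'$ also lies in $\mathcal Q_{\mathrm g}$, contradicting the maximality of $Q'$ (since $Q' \subsetneq 2Q'$). So suppose $4Q' \subseteq \tfrac12 Q$, $F$ is $\delta$-sparse in $4Q'$, and $|\{|\psi| \geq 1\} \cap 4Q'| < \ep(\ell(Q) \log \ell(Q))^{-1/2} |4Q'|$, where $\lambda, \psi$ are the data coming from \eref{ucprimeh} (so in particular $H\psi = \lambda\psi$ in $Q$ and $|\lambda - \bar\lambda| \leq e^{-\beta(\ell(Q)\log\ell(Q))^{1/2}}$).

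First I would check that $2Q' \in \mathcal Q$. Conditions (1) and (4) in the definition of $\mathcal Q$ pass up from $Q'$ (using $\ell(2Q') = 2\ell(Q') \geq \ell(Q')$ and $\tfrac14 Q' \subseteq \tfrac14(2Q') = \tfrac12 Q'$); condition (2) for $2Q'$ asserts $4Q' \subseteq \tfrac12 Q$, which is the negation of alternative (1); and condition (3) for $2Q'$ asserts that $F$ is $\ep$-sparse in $4Q'$, which follows from $\delta$-sparsity of $F$ in $4Q'$ since $\delta < \ep^2 < \ep$. As $\mathcal Q_{\mathrm g} \subseteq \mathcal Q$, it then remains only to verify the good-square inequality $\|\psi\|_{\ell^\infty(2Q')} \leq e^{\beta \ell(2Q') \log \ell(2Q')}$.

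To obtain this, normalize $\psi' = e^{-\beta \ell(Q') \log \ell(Q')} \psi$; because $Q' \in \mathcal Q_{\mathrm g}$ we get $|\psi'| \leq 1$ on $Q' = \tfrac12(2Q')$, and because the scaling factor is at most $1$ we get $\{|\psi'| \geq 1\} \subseteq \{|\psi| \geq 1\}$ (normalizing by this explicit exponential rather than by $\|\psi\|_{\ell^\infty(Q')}$ keeps the argument valid even when $\psi$ vanishes on $Q'$). I then apply the event $\mathcal E_{\mathrm{ex}}(2Q',F)$ to $\psi'$; this event is available because $\mathcal E_{\mathrm{ex}}(2Q',F) \supseteq \mathcal E_{\mathrm{ex}}$, and I use it at scale $2Q'$ rather than $Q'$ precisely in order to propagate the bound from $Q'$ outward to $2Q'$. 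Its hypotheses all hold: $H \psi' = \lambda \psi'$ on $2(2Q') = 4Q' \subseteq Q$; the energy bound $|\lambda - \bar\lambda| \leq e^{-\beta(\ell(Q) \log \ell(Q))^{1/2}}$ implies $|\lambda - \bar\lambda| \leq e^{-\alpha(\ell(2Q') \log \ell(2Q'))^{1/2}}$ because $\beta \geq 2\alpha$, $\ell(2Q') \leq \ell(Q)$, and $x \mapsto x \log x$ is increasing; and for the ``bounded on most of $4Q' \setminus F$'' hypothesis I combine the negation of alternative (3) with the bound $|4Q' \setminus F| \geq (1-\delta)|4Q'| \geq \tfrac12 |4Q'|$ (from $\delta$-sparsity) and with $(\ell(2Q') \log \ell(2Q'))^{-1/2} \geq 2(\ell(Q) \log \ell(Q))^{-1/2}$ (since $4Q' \subseteq \tfrac12 Q$ forces $\ell(2Q') \leq \tfrac14 \ell(Q)$), concluding that the number of points of $4Q' \setminus F$ at which $|\psi'| \geq 1$ is at most $\ep(\ell(Q) \log \ell(Q))^{-1/2}|4Q'| \leq \ep(\ell(2Q') \log \ell(2Q'))^{-1/2}|4Q' \setminus F|$. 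Thus $\mathcal E_{\mathrm{ex}}(2Q',F)$ yields $|\psi'| \leq e^{\alpha \ell(2Q') \log \ell(2Q')}$ on $2Q'$, i.e. $\|\psi\|_{\ell^\infty(2Q')} \leq e^{\alpha \ell(2Q') \log \ell(2Q') + \beta \ell(Q') \log \ell(Q')}$. Since $\ell(2Q') = 2\ell(Q')$ one checks $\ell(2Q') \log \ell(2Q') - \ell(Q') \log \ell(Q') \geq \tfrac12 \ell(2Q') \log \ell(2Q')$, so $\beta \geq 2\alpha$ gives $\alpha \ell(2Q') \log \ell(2Q') + \beta \ell(Q') \log \ell(Q') \leq \beta \ell(2Q') \log \ell(2Q')$. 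Hence $2Q' \in \mathcal Q_{\mathrm g}$, the desired contradiction.

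The one delicate point I expect is the ``bounded on most of $4Q' \setminus F$'' hypothesis: the set $4Q' \setminus F$ is genuinely smaller than $4Q'$, and the fraction tolerated at scale $2Q'$ is measured against $\ell(2Q')$ rather than $\ell(Q)$, so the estimate closes only because passing from $Q'$ to $2Q'$ shrinks the side length by a definite factor (at least $4$, via $4Q' \subseteq \tfrac12 Q$), which more than compensates for the removed frozen sites. Everything else is bookkeeping of the constants; one should note at the outset that $\beta$ is chosen large enough, in terms of $\alpha, \ep, \delta$, for all of the comparisons above (and that $\delta$ is small enough that $(1-\delta) \geq \tfrac12$).
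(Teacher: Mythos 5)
Your proposal is correct and follows essentially the same route as the paper: assume all three alternatives fail, verify that $F$ is $\ep$-sparse in $4Q'$ (via $\delta \leq \ep^2 \leq \ep$), apply $\mathcal E_{\mathrm{ex}}(2Q',F)$ to propagate the bound from $Q'$ to $2Q'$, and conclude $2Q' \in \mathcal Q_{\mathrm g}$, contradicting maximality. Your version is more verbose — in particular you spell out the verification that $2Q'$ lands in $\mathcal Q$, explain the normalization $\psi' = e^{-\beta\ell(Q')\log\ell(Q')}\psi$, and explicitly check the ``bounded on most of $4Q'\setminus F$'' hypothesis via $\ell(2Q') \leq \tfrac14 \ell(Q)$ and $(1-\delta) \geq \tfrac12$ — but these are elaborations of the paper's compressed argument, not a different method.
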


Suppose $Q' \in \mathcal Q_{\mathrm{g}}$ and all three conditions are false.  Observe that $4 Q' \subseteq Q$ and, since $F$ is $\delta$-sparse in $4 Q'$ and we have assumed that $\delta \leq \ep^2 \leq \ep$, we see that $F$ is $\ep$-sparse in $4 Q'$. Thus, the event $\mathcal E_{\mathrm{ex}}(2Q',F)$ holds and we see that
\begin{equation*}
\begin{aligned}
\| \psi \|_{\ell^\infty(2Q')}
& \leq e^{\alpha \ell(2Q') \log \ell(2Q')} \max \{ 1, \| \psi \|_{\ell^\infty(Q')} \} \\
& \leq e^{\alpha \ell(2Q') \log \ell(2Q')} e^{\beta \ell(Q') \log \ell(Q')} \\
& \leq e^{\beta \ell(2Q') \log \ell(2Q')}.
\end{aligned}
\end{equation*}
Here we used $\beta \geq 2 \alpha$.  Since we also have $\tfrac14 (2Q') \cap \tfrac{1}{64} Q \supseteq \tfrac14 Q' \cap \tfrac{1}{64} Q \neq \emptyset$, we see that $2Q' \in \mathcal Q_{\mathrm{g}}$.  In particular, $Q'$ is not maximal with respect to inclusion.

\begin{claim}
\label{cl.alternatives23small}
If $\mathcal Q_{\mathrm{mg}}^k \subseteq \mathcal Q$, for $k = 1, 2, 3$, denote the sets of maximal good squares where the three alternatives from the previous claim hold, then $|\cup \mathcal Q_{\mathrm{mg}}^2| + |\cup \mathcal Q_{\mathrm{mg}}^3| \leq C (\delta + \delta/\ep) |Q|$.  Here $\cup \mathcal Q_{\mathrm{mg}}$ is shorthand for $\cup_{Q \in \mathcal Q_{\mathrm{mg}}} Q$.
\end{claim}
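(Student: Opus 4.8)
The plan is a Vitali covering argument that turns the three alternatives of \clref{alternatives} into area bounds, invoking $\delta$-regularity of $F$ to control the second alternative and the density hypothesis \eref{ucprimeh} to control the third. First I would record two elementary facts valid for every $Q' \in \mathcal Q$: since condition (2) defining $\mathcal Q$ gives $2Q' \subseteq \tfrac12 Q$, one has $\ell(Q') \leq \tfrac14 \ell(Q)$, and as $4Q'$ has the same center as $2Q'$ it follows that $4Q' \subseteq Q$; moreover, for $Q' \in \mathcal Q_{\mathrm{mg}}^3$ the failure of the second alternative means $F$ is $\delta$-sparse, hence $\ep$-sparse (as $\delta \leq \ep$), in $4Q'$, so summing the diagonal bounds gives $|F \cap 4Q'| \leq \ep|4Q'|$ and thus $|4Q' \setminus F| \geq \tfrac12 |4Q'|$.

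Next I would pass to disjoint subfamilies. The collection $\mathcal Q$ is finite, so a standard Vitali covering lemma applied to the tilted squares $\{4Q' : Q' \in \mathcal Q_{\mathrm{mg}}^2\}$ (these are ordinary squares in the rotated coordinates $(s,t)$) yields a disjoint subfamily $4Q'_1, \dots, 4Q'_m$ with $\bigcup_{Q' \in \mathcal Q_{\mathrm{mg}}^2} 4Q' \subseteq \bigcup_i 5\cdot(4Q'_i)$, and likewise a disjoint subfamily $4Q''_1, \dots, 4Q''_n$ for $\mathcal Q_{\mathrm{mg}}^3$; by the first preliminary fact all of these squares lie inside $Q$.

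Then the two area bounds fall out. For the second alternative: the $4Q'_i$ are disjoint tilted squares in $Q$, each failing to be $\delta$-sparse, so the definition of $\delta$-regularity of $F$ in $Q$ gives $\sum_i |4Q'_i| \leq \delta |Q|$, whence (using $Q' \subseteq 4Q'$ and the covering) $|\cup \mathcal Q_{\mathrm{mg}}^2| \leq 25 \sum_i |4Q'_i| \leq 25 \delta |Q|$. For the third alternative: put $B = \{ x \in Q \setminus F : |\psi(x)| > 1 \}$, so that \eref{ucprimeh} reads $|B| \leq \delta (\ell(Q) \log \ell(Q))^{-1/2} |Q|$, while the third alternative together with $|4Q''_j \setminus F| \geq \tfrac12 |4Q''_j|$ gives $|B \cap 4Q''_j| \geq c \ep (\ell(Q) \log \ell(Q))^{-1/2} |4Q''_j|$; summing over the disjoint squares,
\begin{equation*}
c \ep (\ell(Q) \log \ell(Q))^{-1/2} \sum_j |4Q''_j| \leq \sum_j |B \cap 4Q''_j| \leq |B| \leq \delta (\ell(Q) \log \ell(Q))^{-1/2} |Q|,
\end{equation*}
so $\sum_j |4Q''_j| \leq C (\delta/\ep) |Q|$ and $|\cup \mathcal Q_{\mathrm{mg}}^3| \leq C (\delta/\ep) |Q|$. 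Adding the two estimates yields the claimed bound $C(\delta + \delta/\ep)|Q|$.

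The one point that needs care is reconciling the ``fraction of $4Q'$'' statement in the third alternative with the globally small bad set $B \subseteq Q \setminus F$: the contribution of $F \cap 4Q'$ must be discarded, and this is precisely where one uses that $F$ is $\ep$-sparse, hence occupies at most half of $4Q'$ -- a fact guaranteed by the failure of the second alternative on squares in $\mathcal Q_{\mathrm{mg}}^3$. The rest is the routine bookkeeping of a Vitali cover, and I do not expect any serious obstacle beyond tracking the absolute constants in the covering lemma.
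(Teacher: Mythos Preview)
Your approach is the same as the paper's: Vitali covering on the $4Q'$, then $\delta$-regularity handles $\mathcal Q_{\mathrm{mg}}^2$ and the density hypothesis handles $\mathcal Q_{\mathrm{mg}}^3$. The bookkeeping is fine, including the observation that $4Q' \subseteq Q$.

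There is, however, a genuine gap in your ``one point that needs care'' paragraph. You assert that the third alternative $|\{|\psi|\geq 1\}\cap 4Q''_j| \geq \ep(\ell(Q)\log\ell(Q))^{-1/2}|4Q''_j|$ together with $|4Q''_j\setminus F|\geq \tfrac12|4Q''_j|$ yields $|B\cap 4Q''_j|\geq c\ep(\ell(Q)\log\ell(Q))^{-1/2}|4Q''_j|$. This does not follow: knowing that $F$ occupies at most half of $4Q''_j$ is far too weak, because the fraction $\ep(\ell(Q)\log\ell(Q))^{-1/2}$ is tiny compared to $\tfrac12$ (indeed compared to $\delta$), so the entire set $\{|\psi|\geq 1\}\cap 4Q''_j$ could sit inside $F\cap 4Q''_j$. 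The inequality $|A\setminus F|\geq |A|-|F\cap 4Q''_j|$ gives nothing here. A related slip: you assume that squares in $\mathcal Q_{\mathrm{mg}}^3$ fail the second alternative, but the three alternatives are not exclusive; you should explicitly pass to $\mathcal Q_{\mathrm{mg}}^3\setminus\mathcal Q_{\mathrm{mg}}^2$ first.

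To be fair, the paper's own one-line proof glosses over exactly this point, writing ``fraction of $Q$'' where the hypothesis \eref{ucprimeh} only gives a fraction of $Q\setminus F$. The clean fix is upstream: alternative (3) in \clref{alternatives} should be read as a statement about $4Q'\setminus F$ rather than $4Q'$, since that is precisely what the event $\mathcal E_{\mathrm{ex}}(2Q',F)$ from \lref{excellent} requires. With that reading your set $B$ appears directly and the summation argument goes through without the problematic subtraction.
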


By the Vitali covering lemma (see for example Stein \cite{Stein}*{Section 3.2}), we can find $\tilde {\mathcal Q}_{mg}^k \subseteq \mathcal Q_{\mathrm{mg}}^k$ such that $|\cup \tilde {\mathcal Q}_{mg}^k| \geq c |\cup \mathcal Q_{\mathrm{mg}}^k|$ and $Q', Q'' \in \tilde {\mathcal Q}_{mg}^k$ implies $4 Q' \cap 4 Q'' = \emptyset$.  Since $F$ is $\delta$-regular, we must have $|\cup \tilde {\mathcal Q}_{mg}^2| \leq C \delta |Q|$.  Since $|\psi| \leq 1$ on a $1-\delta (\ell(Q) \log \ell(Q))^{-1/2}$ fraction of $Q$, we must have $|\cup \tilde {\mathcal Q}_{mg}^3| \leq C (\delta/\ep) |Q|$.

\begin{claim}
\label{cl.lotsofgoodsquares}
If $\delta > 0$ is sufficiently small, then $|\cup \mathcal Q_{\mathrm{g}}| \geq c |Q|$.
\end{claim}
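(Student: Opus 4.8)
The plan is to exhibit directly a family of pairwise disjoint small squares lying in $\mathcal Q_{\mathrm g}$ whose union already has area at least $c|Q|$. The mechanism is that a tilted square of side length about $\ell(Q)^{1/4}$ is good for structural reasons: by \eref{ucprimeh} we have $|\psi|\leq 1$ on all but at most $\delta(\ell(Q)\log\ell(Q))^{-1/2}|Q|$ sites of $Q$, by $\delta$-regularity $F$ is $\delta$-sparse in most small subsquares, and the event $\mathcal E_{\mathrm{ex}}$ — which we are assuming — promotes ``$|\psi|\leq1$ on the core of a small square and on most of its double'' to ``$|\psi|\leq e^{\alpha\ell(Q')\log\ell(Q')}$ on all of it''.

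Concretely, I would set $\rho=\lceil\ell(Q)^{1/4}\rceil$ and let $\mathcal T$ be a grid tiling of $\tfrac1{64}Q$ by disjoint tilted squares of side length $\rho$, discarding the boundary strip (which costs only $O(\rho\ell(Q))=o(|Q|)$ in area, so $|\cup\mathcal T|\geq c|Q|$). For $\ell(Q)\geq\beta$ large, every $Q'\in\mathcal T$ satisfies $\ell(Q')=\rho\geq\ell(Q)^{1/4}$, $2Q'\subseteq\tfrac12 Q$, and $\tfrac14 Q'\cap\tfrac1{64}Q=\tfrac14 Q'\neq\emptyset$, so $Q'\in\mathcal Q$ as soon as $F$ is $\ep$-sparse in $2Q'$. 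I would then call $Q'\in\mathcal T$ \emph{defective} if (a) $F$ is not $\ep$-sparse in $2Q'$, or (b) $\tfrac12 Q'$ contains a site with $|\psi|>1$, or (c) $2Q'$ contains more than $\tfrac\ep2(\ell(Q')\log\ell(Q'))^{-1/2}|2Q'|$ sites with $|\psi|>1$.

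The two things to verify are that non-defective squares are good and that defective squares are few. For the first, if $Q'\in\mathcal T$ is non-defective then $Q'\in\mathcal Q$ and, under \eref{ucprimeh}, the hypotheses of the event $\mathcal E_{\mathrm{ex}}(Q',F)$ all hold: $H\psi=\lambda\psi$ on $2Q'\subseteq Q$; $|\lambda-\bar\lambda|\leq e^{-\beta(\ell(Q)\log\ell(Q))^{1/2}}\leq e^{-\alpha(\ell(Q')\log\ell(Q'))^{1/2}}$ since $\beta\geq\alpha$ and $\ell(Q')\leq\ell(Q)$; $|\psi|\leq1$ on $\tfrac12 Q'$ since (b) fails; and, since $\ep$-sparsity forces $|2Q'\setminus F|\geq(1-\ep)|2Q'|\geq\tfrac12|2Q'|$, the failure of (c) gives $|\psi|\leq1$ on a $1-\ep(\ell(Q')\log\ell(Q'))^{-1/2}$ fraction of $2Q'\setminus F$. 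As $\mathcal E_{\mathrm{ex}}$ is assumed, $\|\psi\|_{\ell^\infty(Q')}\leq e^{\alpha\ell(Q')\log\ell(Q')}\leq e^{\beta\ell(Q')\log\ell(Q')}$, i.e. $Q'\in\mathcal Q_{\mathrm g}$. For the second, a type-(a) defective also fails $\delta$-sparsity in $2Q'$ (because $\delta\leq\ep$); the doubled squares $\{2Q':Q'\text{ type (a)}\}$ have bounded overlap and hence split into $O(1)$ pairwise disjoint subfamilies (the corners lie in a $\rho$-grid, so colour them modulo $4\rho$), and $\delta$-regularity of $F$ in $Q$ applied to each subfamily yields $\sum|2Q'|\leq C\delta|Q|$, hence $\sum|Q'|\leq C\delta|Q|$. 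Type-(b) and type-(c) defectives are few because \eref{ucprimeh} bounds $|\{|\psi|>1\}\cap Q|$ by $\delta(\ell(Q)\log\ell(Q))^{-1/2}|Q|$ while any given site lies in $\tfrac12 Q'$ (resp. $2Q'$) for at most $O(1)$ of the disjoint $Q'\in\mathcal T$; multiplying the resulting count by the area $\rho^2\leq C\ell(Q)^{1/2}$ and using $\ell(Q)\geq\beta$, their total area is again $\leq C\delta|Q|$.

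Putting these together, the non-defective members of $\mathcal T$ lie in $\mathcal Q_{\mathrm g}$ and cover area at least $|\cup\mathcal T|-C\delta|Q|\geq c|Q|-C\delta|Q|\geq\tfrac c2|Q|$ once $\delta$ is sufficiently small, which is the claim. I do not expect a real obstacle; the care needed is all geometric bookkeeping — confirming the $\mathcal Q$-membership conditions at scale $\rho$ (in particular that $2Q'$ stays inside $\tfrac12 Q$ and $\tfrac14 Q'$ meets $\tfrac1{64}Q$), handling the bounded overlap of $\{2Q'\}$ and the parities inherent in tilted squares, and checking that the exponent comparisons $e^{-\beta(\cdots)}\leq e^{-\alpha(\cdots)}$ and $e^{\alpha(\cdots)}\leq e^{\beta(\cdots)}$ point the right way at that scale. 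The one inequality worth double-checking is that passing from ``$|\psi|\leq1$ on most of $Q$'' to ``$|\psi|\leq1$ on most of the tiny set $2Q'$'' is affordable; it is, because the exceptional sites number only $\delta(\ell(Q)\log\ell(Q))^{-1/2}|Q|$, so at most $O(\delta|Q|/\rho^2)$ of the small squares can be spoiled.
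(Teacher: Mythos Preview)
Your argument is correct and follows essentially the same route as the paper: tile $\tfrac{1}{64}Q$ by disjoint tilted squares of side about $\ell(Q)^{1/4}$, then count the few that can fail to be good. The paper's version is somewhat leaner, however: it takes squares $Q'_k$ with the $2Q'_k$ pairwise disjoint and simply observes that any $Q'_k$ containing \emph{no} site with $|\psi|>1$ automatically satisfies $\|\psi\|_{\ell^\infty(Q'_k)}\leq 1\leq e^{\beta\ell(Q'_k)\log\ell(Q'_k)}$, so $Q'_k\in\mathcal Q_{\mathrm g}$ trivially once $F$ is $\delta$-sparse in $2Q'_k$. Since there are $\geq c\ell(Q)^{3/2}$ such squares and only $\delta\ell(Q)^{3/2}(\log\ell(Q))^{-1/2}$ bad sites in total, most $Q'_k$ survive; there is no need to invoke $\mathcal E_{\mathrm{ex}}$ or your type-(c) condition at this stage. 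Your detour through $\mathcal E_{\mathrm{ex}}(Q',F)$ works but is unnecessary here --- the growth lemma is only needed later, in the proof of \clref{alternatives}, to propagate goodness upward in scale.
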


Observe that we can find a list of tilted squares $Q'_1, ..., Q'_K \subseteq \tfrac{1}{64} Q$ such that $2 Q'_k$ are disjoint, $\ell(Q'_k) \geq \ell(Q)^{1/4}$, and $K \geq c \ell(Q)^{3/2}$.  Since $F$ is $\delta$-regular, we have $F$ is $\delta$-sparse in $2 Q'_k$ for a $1-C\delta$ fraction of the $Q'_k$.  Since $|\psi| > 1$ on at most $\delta \ell(Q)^{3/2} (\log \ell(Q))^{-1/2}$ many points in $Q$ and there are at least $c \ell(Q)^{3/2}$ squares $Q'_k$, we must have $|\psi| \leq 1$ on at least half of the $Q'_k$.  Assuming $\delta > 0$ is sufficiently small, we see that $\mathcal Q_{\mathrm{g}}$ contains at least $c |Q|^{3/2}$ disjoint squares with volume $|Q|^{1/2}$.

\begin{claim}
$|\psi| \leq e^{\beta \ell(Q) \log \ell(Q)}$ in $\tfrac{1}{64} Q$.
\end{claim}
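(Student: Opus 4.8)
The plan is to assemble the three preceding claims into a statement about where the ``good'' squares can sit, and then use an elementary geometric fact to locate a single good square that is forced to contain $\tfrac{1}{64}Q$; the bound $|\psi| \le e^{\beta \ell(Q)\log\ell(Q)}$ on that square then gives the claim. Throughout we keep $\alpha,\ep$ fixed (the constants from \lref{excellent}) and are still free to shrink $\delta$.

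First I would observe that, since $Q$ is finite, every element of $\mathcal Q_{\mathrm g}$ is contained in a maximal element, so $\cup\mathcal Q_{\mathrm g}\subseteq\cup\mathcal Q_{\mathrm{mg}}$; combined with \clref{lotsofgoodsquares} this gives $|\cup\mathcal Q_{\mathrm{mg}}|\ge c|Q|$ for a universal $c>0$ (after shrinking $\delta$ as that claim requires). By \clref{alternatives} we may write $\mathcal Q_{\mathrm{mg}}=\mathcal Q_{\mathrm{mg}}^1\cup\mathcal Q_{\mathrm{mg}}^2\cup\mathcal Q_{\mathrm{mg}}^3$, and \clref{alternatives23small} bounds $|\cup\mathcal Q_{\mathrm{mg}}^2|+|\cup\mathcal Q_{\mathrm{mg}}^3|\le C(\delta+\delta/\ep)|Q|$. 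Since $\ep$ was fixed before $\delta$, I would further shrink $\delta$ so that $C(\delta+\delta/\ep)<\tfrac12 c$ (still compatibly with the constraint $\delta<\ep^2$). Then $|\cup\mathcal Q_{\mathrm{mg}}^1|\ge\tfrac12 c|Q|>0$, so $\mathcal Q_{\mathrm{mg}}^1\neq\emptyset$; fix any $Q'\in\mathcal Q_{\mathrm{mg}}^1$.

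The crux is then a purely geometric observation: if a tilted square $Q'$ satisfies $2Q'\subseteq\tfrac12 Q$ and $\tfrac14 Q'\cap\tfrac1{64}Q\neq\emptyset$ (membership in $\mathcal Q$) together with $4Q'\nsubseteq\tfrac12 Q$ (alternative $(1)$), then $\ell(Q')\le\tfrac14\ell(Q)$ and $\tfrac1{64}Q\subseteq Q'$. I would prove this by identifying the tilted square of side $\ell$ centered at $p$ with the $\ell^1$-ball $\{(x,y):|x-p_1|+|y-p_2|\le\ell/2\}$. Then $2Q'\subseteq\tfrac12 Q$ yields $\ell(Q')\le\tfrac14\ell(Q)$ and keeps the center of $Q'$ at $\ell^1$-distance at least $\tfrac12\ell(Q)-2\ell(Q')$ from the boundary of $\tfrac12 Q$; a common point of $\tfrac14 Q'$ and $\tfrac1{64}Q$ forces the center of $Q'$ to lie within $\ell^1$-distance $\tfrac1{16}\ell(Q')+\tfrac1{64}\ell(Q)$ of the center of $Q$; and $4Q'\nsubseteq\tfrac12 Q$ combines these into a lower bound $\ell(Q')\ge c_0\ell(Q)$ with $c_0$ large enough ($c_0=5/44$ is enough) that the $\ell^1$-ball of radius $\ell(Q')/2$ about the center of $Q'$ swallows $\tfrac1{64}Q$. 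I expect this bookkeeping with the various homothety factors $\tfrac12,\tfrac14,\tfrac1{64},2,4$, and the verification that the resulting numerical inequality actually closes, to be the one genuinely fiddly (though routine) step.

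Finally, since $Q'$ is a good square, $\|\psi\|_{\ell^\infty(\frac1{64}Q)}\le\|\psi\|_{\ell^\infty(Q')}\le e^{\beta\ell(Q')\log\ell(Q')}\le e^{\beta\ell(Q)\log\ell(Q)}$, using that $t\mapsto t\log t$ is increasing together with $1<\ell(Q')\le\ell(Q)$. This is the desired bound, so $\mathcal E'_{\mathrm{uc}}(Q,F)\supseteq\mathcal E_{\mathrm{ex}}$ whenever \eref{ucprimeh} holds; combined with the covering reduction at the start of the proof and the estimate $\P[\mathcal E_{\mathrm{ex}}\mid V_F=v]\ge1-e^{-\delta\ell(Q)^{1/4}}$, this completes the proof of \tref{continuation}.
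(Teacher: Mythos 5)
Your proposal is correct, and its structure parallels the paper's, but the last step is a genuine (if small) streamlining. The paper extracts a $Q' \in \mathcal Q_{\mathrm{mg}}^1 \setminus (\mathcal Q_{\mathrm{mg}}^2 \cup \mathcal Q_{\mathrm{mg}}^3)$, reruns the argument of \clref{alternatives} (invoking $\mathcal E_{\mathrm{ex}}(2Q',F)$ once more) to bound $\|\psi\|_{\ell^\infty(2Q')}$, and then needs only the weak intersection $Q'\cap\tfrac1{64}Q\neq\emptyset$ to conclude $\tfrac1{64}Q\subseteq 2Q'$. You instead take any $Q'\in\mathcal Q_{\mathrm{mg}}^1$ (so you do not need to rule out $\mathcal Q_{\mathrm{mg}}^2\cup\mathcal Q_{\mathrm{mg}}^3$, and you do not re-invoke $\mathcal E_{\mathrm{ex}}$), and you exploit the stronger intersection $\tfrac14 Q'\cap\tfrac1{64}Q\neq\emptyset$ — which is already built into the definition of $\mathcal Q$ — to obtain the tighter containment $\tfrac1{64}Q\subseteq Q'$, so that $Q'\in\mathcal Q_{\mathrm g}$ alone finishes the job. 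Your geometric computation has a small slip: the $\ell^1$-radius of $\tfrac14 Q'$ is $\ell(Q')/8$, not $\ell(Q')/16$, so the bound on $\|p'-p\|_1$ is $\ell(Q')/8+\ell(Q)/64$ and the lower bound you actually get is $\ell(Q')>\tfrac{15}{136}\ell(Q)$ rather than $\tfrac5{44}\ell(Q)$. But the containment $\tfrac1{64}Q\subseteq Q'$ only needs $\ell(Q')\ge\tfrac1{12}\ell(Q)$, and $\tfrac{15}{136}>\tfrac1{12}$, so the argument still closes. This is a legitimate simplification of the final paragraph of the paper's proof.
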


Since we assumed $\ep^2 > \delta > 0$ and $\ep > 0$ small, \clref{alternatives23small} and \clref{lotsofgoodsquares} together imply there is a $Q' \in \mathcal Q_{\mathrm{mg}}^1 \setminus (\mathcal Q_{\mathrm{mg}}^2 \cup \mathcal Q_{\mathrm{mg}}^3)$.    Together $Q' \in \mathcal Q_{\mathrm{g}}$, $Q' \notin \mathcal Q_{\mathrm{mg}}^2$, $Q' \notin \mathcal Q_{\mathrm{mg}}^3$, and $\mathcal E_{\mathrm{ex}}(2Q',F)$ imply that $\| \psi \|_{\ell^\infty(2Q')} \leq e^{\beta \ell(2Q') \log \ell(2Q')}$.  Since $4Q' \nsubseteq \tfrac12 Q$ and $Q' \cap \tfrac{1}{64} Q \neq \emptyset$, we have $\tfrac{1}{64} Q \subseteq 2 Q'$ and the claim.
\end{proof}

\section{Sperner's Theorem}

We prove a generalization of Sperner's theorem \cite{Sperner}.  Our argument is a modification of Lubell's proof \cite{Lubell} of the Lubell--Yamamoto--Meshalkin inequality.  Recall that a Sperner family is a set $\mathcal A$ of subsets of $\{ 1, ..., n \}$ that form an antichain with respect to inclusion.  We consider a relaxation of this condition.

\begin{definition}
Suppose $\rho \in (0,1]$.  A set $\mathcal A$ of subsets of $\{ 1, ..., n \}$ is $\rho$-Sperner if, for every $A \in \mathcal A$, there is a set $B(A) \subseteq \{ 1, ..., n \} \setminus A$ such that $|B(A)| \geq \rho (n - |A|)$ and $A \subseteq A' \in \mathcal A$ implies $A' \cap B(A) = \emptyset$.
\end{definition}

Note that a Sperner family is $1$-Sperner with $B(A) = \{ 1, ..., n \} \setminus A$.  In particular, the following result is a generalization of Sperner's theorem.

\begin{theorem}
\label{t.sperner}
If $\rho \in (0,1]$ and $\mathcal A$ is a $\rho$-Sperner set of subsets of $\{ 1, ..., n \}$, then
\begin{equation*}
|\mathcal A| \leq 2^n n^{-1/2} \rho^{-1}.
\end{equation*}
\end{theorem}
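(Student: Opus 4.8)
The plan is to adapt Lubell's permutation proof of the LYM inequality, using the $\rho$-Sperner hypothesis to rule out long chains. First I would set up the standard machinery: let $\pi$ be a uniformly random permutation of $\{1,\dots,n\}$ and put $C_k = \{\pi(1),\dots,\pi(k)\}$, so that $C_0 \subsetneq C_1 \subsetneq \dots \subsetneq C_n$ is a uniformly random maximal chain in the Boolean lattice. The elementary observation underlying Sperner's theorem is that any two members of $\mathcal A$ lying on this chain are of the form $C_i$ and $C_j$, hence comparable; consequently the family of members of $\mathcal A$ that lie on the chain, if nonempty, has a unique maximal element. Therefore the events ``$A$ is the maximal member of $\mathcal A$ lying on the chain'', as $A$ ranges over $\mathcal A$, are pairwise disjoint, and
\[
\sum_{A \in \mathcal A} \P[A \text{ is the maximal member of } \mathcal A \text{ lying on the chain}] \le 1 .
\]

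The heart of the argument is a lower bound on each summand, and this is where the $\rho$-Sperner condition enters. I claim that the event ``$C_{|A|} = A$ and $\pi(|A|+1) \in B(A)$'' is contained in the event ``$A$ is the maximal member of $\mathcal A$ lying on the chain''. Indeed, $C_{|A|}=A$ puts $A$ on the chain; and if some $A' \in \mathcal A$ with $A' \supsetneq A$ also lay on the chain, then $A' = C_k$ for some $k > |A|$, so $\pi(|A|+1) \in C_k = A'$, whereas the $\rho$-Sperner property applied to the inclusion $A \subseteq A' \in \mathcal A$ forces $A' \cap B(A) = \emptyset$, contradicting $\pi(|A|+1) \in B(A)$. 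Now I would estimate the probability of this event directly: $\P[C_{|A|} = A] = \binom{n}{|A|}^{-1}$, and conditionally on $\{C_{|A|} = A\}$ the value $\pi(|A|+1)$ is uniform over the $n-|A|$ points outside $A$, of which at least $\rho(n-|A|)$ lie in $B(A)$, so the conditional probability is at least $\rho$. (If $|A| = n$ then ``$A$ is the maximal member'' holds automatically and the bound is trivial.) Hence $\P[A \text{ maximal member}] \ge \rho\,\binom{n}{|A|}^{-1}$.

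Plugging this into the displayed inequality and using $\binom{n}{|A|} \le \binom{n}{\lfloor n/2 \rfloor}$ gives
\[
1 \ \ge\ \rho \sum_{A \in \mathcal A} \binom{n}{|A|}^{-1} \ \ge\ \rho\, |\mathcal A|\, \binom{n}{\lfloor n/2 \rfloor}^{-1},
\]
so $|\mathcal A| \le \rho^{-1}\binom{n}{\lfloor n/2\rfloor}$, and the theorem follows from the standard estimate $\binom{n}{\lfloor n/2\rfloor} \le 2^n n^{-1/2}$ (valid for all $n \ge 1$). The one step genuinely worth scrutinizing — the main obstacle, such as it is — is the blocking claim: that committing the next chain element to $B(A)$ really does forbid \emph{every} strictly larger member of $\mathcal A$ from appearing anywhere later on the chain, not merely the immediate successor. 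Everything else is the bookkeeping of Lubell's double counting together with a crude binomial bound.
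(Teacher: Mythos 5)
Your proof is correct, and it establishes the same intermediate LYM-type inequality $\sum_{A \in \mathcal A} \binom{n}{|A|}^{-1} \leq \rho^{-1}$ that the paper does, but by a genuinely different mechanism. Both arguments run Lubell's permutation double-count over a random maximal chain, and both pass from that LYM bound to the theorem via $\binom{n}{k} \leq 2^n n^{-1/2}$. Where they diverge is in how the $\rho$-Sperner hypothesis is spent. The paper works with $\mathcal A_\sigma$, the collection of members of $\mathcal A$ appearing as prefixes of $\sigma$, and bounds $\E[|\mathcal A_\sigma|]$ by summing tail probabilities, proving $\P[|\mathcal A_\sigma| \geq j+1] \leq (1-\rho)^j$; this uses the blocking set $B(A)$ of the \emph{least} prefix iteratively across $j$ consecutive sampling steps, yielding geometric decay. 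You instead lower-bound, for each fixed $A$, the probability that $A$ is the \emph{unique maximal} member of $\mathcal A$ on the chain, spending the $\rho$-Sperner condition exactly once: the event $\{C_{|A|}=A,\ \pi(|A|+1)\in B(A)\}$ forces maximality (your blocking claim is sound, since any superset $A'\in\mathcal A$ on the chain would contain $\pi(|A|+1)$ yet must be disjoint from $B(A)$), and disjointness of the maximality events then yields the LYM bound with no layer-cake or geometric series needed. Your one-step version is arguably more direct; the paper's version carries the extra information that $|\mathcal A_\sigma|$ has geometric tails, which the theorem does not require. Both are valid and give the identical final bound.
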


\begin{proof}
Let $\Pi_n$ denote the set of permutations of $\{ 1, ..., n \}$.  For $\sigma \in \Pi_n$, let $\mathcal A_\sigma = \{ \{ \sigma_1, ..., \sigma_k \} \in \mathcal A : k = 0, ..., n \}$.      For $k \geq 0$, let $\mathcal A_k = \{ A \in \mathcal A : |A| = k \}$.

\begin{claim}
$\left(\begin{smallmatrix} n \\ k \end{smallmatrix} \right) \leq 2^n n^{-1/2}$ for $k = 0, ..., n$.
\end{claim}

This is standard.	

\begin{claim}
If $A \in \mathcal A_k$, then $|\{ \sigma \in \Pi_n : A \in \mathcal A_\sigma \}| = k! (n-k)!$.
\end{claim}

This follows because $A \in \mathcal A_\sigma$ if and only if $\sigma$ is a permutation of $A$ concatenated with a permutation of $\{ 1, ..., n \} \setminus A$.

\begin{claim}
If $j \geq 0$, then $|\{ \sigma \in \Pi_n : |\mathcal A_\sigma| \geq j + 1)\}| \leq n! (1 - \rho)^j$.
\end{claim}

Sample a uniform random $\sigma \in \Pi_n$ one element at a time.  In order to have $|\mathcal A_\sigma| \geq j + 1$, there must be a least $k \geq 0$ such that $\{ \sigma_1, ..., \sigma_k \} \in \mathcal A$.  Moreover, by the $\rho$-Sperner property, it must also be the case that $\sigma_{k+i} \notin B(\{ \sigma_1, ..., \sigma_k\})$ for $i = 1, ..., j$.  Each time we sample the next $\sigma_{k+i}$, the probability that $\sigma_{k+i} \notin B(\{ \sigma_1, ..., \sigma_k\})$ is at most $1-\rho$.  In particular, the probability a uniform random $\sigma \in \Pi_n$ has $|\mathcal A_\sigma| \geq j + 1$ is at most $(1-\rho)^j$.

Using the claims, compute
\begin{equation*}
\begin{aligned}
|\mathcal A|
& = \sum_{k \geq 0} |\mathcal A_k| \\
& \leq 2^n n^{-1/2} \sum_{k \geq 0} \frac{k! (n-k)!}{n!} |\mathcal A_k| \\
& = 2^n n^{-1/2} \sum_{\sigma \in \Pi_n} \frac{1}{n!} |\mathcal A_\sigma| \\
& = 2^n n^{-1/2} \sum_{j \geq 0} \frac{1}{n!} |\{ \sigma \in \Pi_n : |\mathcal A_\sigma| \geq j + 1 \}| \\
& \leq 2^n n^{-1/2} \sum_{j \geq 0} (1-\rho)^j \\
& = 2^n n^{-1/2} \rho^{-1}.
\end{aligned}
\end{equation*}
Here the second, third, and fifth steps follow from claims, in order.	
\end{proof}

\section{Wegner Estimate}

We recall the Courant--Fischer--Weyl min-max principle, which says that, for $A \in S^2(\R^n)$, the eigenvalues $\lambda_1(A) \geq \cdots \geq \lambda_n(A)$ can be computed by
\begin{equation*}
\lambda_k(A) = \max_{\substack{V \subseteq \R^n \\ \dim(V) = k}} \min_{\substack{v \in V \\ \| v \| = 1}} \langle v, A v \rangle = \min_{\substack{V \subseteq \R^n \\ \dim(V) = n-k+1}} \max_{\substack{v \in V \\ \| v \| = 1}} \langle v, A v \rangle.
\end{equation*}
We use this to prove the following eigenvalue variation result.

\begin{lemma}
\label{l.minmax}
Suppose the real symmetric matrix $A \in S^2(\R^n)$ has eigenvalues $\lambda_1 \geq \lambda_2 \geq \cdots \geq \lambda_n \in \R$ with orthonormal eigenbasis $v_1, v_2, ..., v_n \in \R^n$. If
\begin{enumerate}
\item $0 < r_1 < r_2 < r_3 < r_4 < r_5 < 1$
\item $r_1 \leq c \min \{ r_3 r_5, r_2 r_3 / r_4 \}$
\item $0 < \lambda_j \leq \lambda_i < r_1 < r_2 < \lambda_{i-1}$
\item $v_{j,k}^2 \geq r_3$
\item $\sum_{r_2 < \lambda_\ell < r_5} v_{\ell,k}^2 \leq r_4$
\end{enumerate}
then $\trace \id_{[r_1,\infty)}(A) < \trace \id_{[r_1,\infty)}(A + e_k \otimes e_k)$, where $e_k \in \R^n$ is the $k$th standard basis element.
\end{lemma}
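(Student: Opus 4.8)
The plan is to reduce \lref{minmax} to a single scalar inequality and then verify that inequality from the five hypotheses. I will work in the eigenbasis $v_1,\dots,v_n$ and write $w\in\R^n$ for the coordinate vector of $e_k$ in this basis, so that $w_\ell = v_{\ell,k}$, $\sum_\ell w_\ell^2 = 1$, and, as quadratic forms, $A = \operatorname{diag}(\lambda_1,\dots,\lambda_n)$ and $e_k\otimes e_k = w w^{\top}$. By hypothesis (3) no $\lambda_\ell$ equals $r_1$, so $D := A - r_1\id$ is invertible. Writing $n_-$ for the number of negative eigenvalues of a symmetric matrix, I have $\trace\id_{[r_1,\infty)}(A) = n - n_-(D)$ and $\trace\id_{[r_1,\infty)}(A + e_k\otimes e_k) = n - n_-(D + ww^{\top})$, so it suffices to show $n_-(D + ww^{\top}) < n_-(D)$. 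Equating the two expressions for the inertia of the bordered matrix $\left(\begin{smallmatrix} D & w \\ w^{\top} & -1\end{smallmatrix}\right)$ obtained by taking Schur complements with respect to the block $D$ and with respect to the block $-1$ (Haynsworth inertia additivity), one finds $n_-(D+ww^{\top}) = n_-(D) - 1$ whenever $w^{\top}D^{-1}w \le -1$ and $n_-(D+ww^{\top}) = n_-(D)$ otherwise. (Alternatively one can run the one-parameter family $A + t(e_k\otimes e_k)$, $t\in[0,1]$, and use the matrix determinant lemma together with $\dim\ker(D + ww^{\top}) \le 1$.) Thus it is enough to prove $g := \sum_\ell v_{\ell,k}^2 / (\lambda_\ell - r_1) \le -1$.

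Next I will estimate $g$. Hypothesis (3) guarantees that no eigenvalue lies in $[r_1,r_2]$, so each $\lambda_\ell$ is either below $r_1$ or above $r_2$, and I split $g$ into the three groups $\lambda_\ell < r_1$, $r_2 < \lambda_\ell < r_5$, and $\lambda_\ell \ge r_5$. In the first group every term is nonpositive, so the group sum is at most the single $j$-term $v_{j,k}^2/(\lambda_j - r_1)$, which by hypothesis (4) and $0 < \lambda_j < r_1$ is at most $-r_3/r_1$. In the second group each denominator exceeds $r_2 - r_1 \ge r_2/2$ (using $r_1 \le c\,r_2 r_3/r_4 \le r_2/2$ from hypothesis (2), since $r_3 < r_4$ by hypothesis (1)), so by hypothesis (5) the group sum is at most $r_4/(r_2-r_1) \le 2r_4/r_2 \le 2c\,r_3/r_1$. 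In the third group each denominator is at least $r_5 - r_1 \ge r_5/2$ (using $r_1 \le c\,r_3 r_5 \le r_5/2$), and since $\sum_\ell v_{\ell,k}^2 = 1$ the group sum is at most $1/(r_5-r_1) \le 2/r_5 \le 2c\,r_3/r_1$, again by hypothesis (2). Adding the three bounds yields $g \le -\tfrac{r_3}{r_1}(1 - 4c) \le -\tfrac{1-4c}{c\,r_5} < -1$ once the universal constant $c$ appearing in hypothesis (2) is taken small enough (here I also use $r_5 < 1$ and hypothesis (2) once more to turn $r_3/r_1$ into $1/(c r_5)$).

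I expect the main obstacle to be the bookkeeping in the second step rather than the reduction, which is routine linear algebra. In particular, hypothesis (3) must be used through the eigenvalue-free gap $[r_1,r_2]$: without such a gap, a term $v_{\ell,k}^2/(\lambda_\ell - r_1)$ with $\lambda_\ell$ just above $r_1$ could be an arbitrarily large positive contribution and overwhelm the negative $j$-term, so no choice of $c$ would close the estimate. Both forms of hypothesis (2) are genuinely needed: $r_1 \lesssim r_2 r_3/r_4$ absorbs the "near" eigenvalues in $(r_2,r_5)$ against the sparsity bound of hypothesis (5), while $r_1 \lesssim r_3 r_5$ absorbs the "far" eigenvalues $\ge r_5$ against the trivial $\ell^2$ identity $\sum_\ell v_{\ell,k}^2 = 1$.
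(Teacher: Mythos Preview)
Your proof is correct, but it takes a genuinely different route from the paper. The paper argues variationally: it applies the Courant--Fischer min-max formula with the test space $W_{i,j}=\operatorname{Span}\{v_1,\dots,v_{i-1},v_j\}$ and shows $\langle w,(A+e_k\otimes e_k)w\rangle\ge r_1$ for every unit $w\in W_{i,j}$ by decomposing $w$ into its components along $v_j$, along $\{v_\ell:r_2<\lambda_\ell<r_5\}$, and along $\{v_\ell:\lambda_\ell\ge r_5\}$, followed by a three-case analysis on the sizes of the coefficients. You instead convert the rank-one perturbation into the secular/Schur-complement condition $\sum_\ell v_{\ell,k}^2/(\lambda_\ell-r_1)\le -1$ via Haynsworth inertia additivity, and then bound this single scalar sum by the same three-way split of the spectrum. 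Both arguments use the hypotheses in identical roles (the gap $[r_1,r_2]$ from (3), the mass bound (4) for the negative $j$-term, the sparsity (5) for the intermediate range, and the two halves of (2) to absorb the intermediate and far positive contributions), so neither is more general; your reduction is slightly cleaner bookkeeping-wise since it lands on one inequality rather than a case split, while the paper's min-max argument avoids invoking the inertia formula and stays entirely within elementary quadratic-form estimates.
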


\begin{proof}
It is enough to show $\lambda_i(A') \geq r_1$, where $A' = A + e_k \otimes e_k$.  

The min-max principle gives
\begin{equation*}
\lambda_i(A + e_k \otimes e_k)
 = \max_{\substack{W \subseteq \R^n \\ \dim W = i}} \min_{\substack{w \in W \\ \|w\| = 1}} \langle w, A' w \rangle
 \geq \min_{\substack{w \in W_{i,j} \\ \|w\| = 1}} \langle w, A' w \rangle,
\end{equation*}
where $W_{i,j} = \mathrm{Span} \{ v_1, ..., v_{i-1}, v_j \}$.

To estimate the right-hand side above, we observe that every unit vector $w \in W_{i,j}$ can be written $\alpha_1 v_j + \alpha_2 w_2 + \alpha_3 w_3$, where $\alpha_1^2 + \alpha_2^2 + \alpha_3^2 = 1$, $w_2 \in \mathrm{Span}\{ v_\ell : r_2 < \lambda_\ell < r_5 \}$ a unit vector, and $w_3 \in \mathrm{Span} \{ v_\ell : r_5 \leq \lambda_\ell \}$ a unit vector.

We break into three cases.

Case 1.  If $\alpha_2^2 \geq c r_3 / r_4$, then $\langle w, A' w \rangle \geq \alpha_2^2 \langle w_2, A w_2 \rangle \geq \alpha_2^2 r_2 \geq c r_2 r_3 / r_4 \geq r_1$.

Case 2.  If $\alpha_3^2 \geq c r_3$, then $\langle w, A' w \rangle \geq \alpha_3^2 \langle w_3, A w_3 \rangle \geq \alpha_3^2 r_5 \geq c r_3 r_5 \geq r_1$.

Case 3.  If $\alpha_2^2 \leq c r_3 / r_4$ and $\alpha_3^2 \leq c r_3$, then $\langle w, A' w \rangle \geq \langle w, (e_k \otimes e_k) w \rangle = (\alpha_1 v_{j,k} + \alpha_2 w_{2,k} + \alpha_3 w_{3,k})^2 \geq \tfrac12 \alpha_1^2 r_3 - 2 \alpha_2^2 r_4 - 2 \alpha_3^2 \geq c r_3 \geq r_1$.  Here we used the fact that $(x+y+z)^2 \geq \tfrac12 x^2 - 2 y^2 - 2 z^2$ holds for all $x, y, z \in \R$ and that our hypotheses imply $v_{j,k}^2 \geq r_3$ and $w_{2,k}^2 \leq r_4.$
\end{proof}

We need the following bound on the size of a family of almost orthonormal vectors.  An almost identical version of the following lemma and proof appears in Tao \cite{Tao} as a step in a proof of a version of the Kabatjanskii--Levenstein \cite{Kabatjanskii-Levenstein} bound.

\begin{lemma}[Tao \cite{Tao}]
  \label{l.kl}
  If $v_1, ..., v_m \in \R^n$ satisfy $|v_i \cdot v_j - \delta_{ij}| \leq (5n)^{-1/2}$, then $m \leq (5 - \sqrt 5) n / 2$.
\end{lemma}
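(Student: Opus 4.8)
The plan is to reduce to exact unit vectors and then apply a Welch-type inequality: Cauchy--Schwarz applied to the spectrum of the frame operator. First, put $\ep = (5n)^{-1/2}$. Since $\ep \le 5^{-1/2} < 1$, the hypothesis forces $|v_i|^2 = v_i \cdot v_i \ge 1 - \ep > 0$, so each $v_i \ne 0$ and $\hat v_i := v_i/|v_i|$ is well defined. For $i \ne j$ one has $|v_i||v_j| \ge 1 - \ep$ and $|v_i \cdot v_j| \le \ep$, hence $|\hat v_i \cdot \hat v_j| \le \delta := \ep/(1-\ep)$. So it suffices to bound $m$ under the assumption that $\hat v_1, \dots, \hat v_m \in \R^n$ are unit vectors with $|\hat v_i \cdot \hat v_j| \le \delta$ whenever $i \ne j$.

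Next I would form the frame operator $S = \sum_{i=1}^m \hat v_i \otimes \hat v_i \in S^2(\R^n)$. It is positive semidefinite of rank at most $n$, so Cauchy--Schwarz applied to its at most $n$ nonzero eigenvalues gives $(\trace S)^2 \le n\,\trace(S^2)$. Since $\trace S = \sum_i |\hat v_i|^2 = m$ and $\trace(S^2) = \sum_{i,j}(\hat v_i \cdot \hat v_j)^2 \le m + m(m-1)\delta^2$, this reads $m^2 \le n\bigl(m + m(m-1)\delta^2\bigr)$; dividing by $m$ (the case $m = 0$ is trivial) gives $m \le n + n(m-1)\delta^2$.

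Finally I would solve this inequality, which is where all the care is needed. From $n\ep^2 = 1/5$ one gets $n\delta^2 = \tfrac{1}{5(1-\ep)^2}$, and $\ep \le 5^{-1/2} < 1 - 5^{-1/2}$ forces $5(1-\ep)^2 > 1$; multiplying through by $5(1-\ep)^2$ and using $5n\ep^2 = 1$ then gives $m\bigl(5(1-\ep)^2 - 1\bigr) \le 5n(1-\ep)^2 - 1 = 5n - 2\sqrt{5n}$, whence $m \le \tfrac{5n - 2\sqrt{5n}}{5(1-\ep)^2 - 1}$. It remains to verify $\tfrac{5n-2\sqrt{5n}}{5(1-\ep)^2-1} \le \tfrac{(5-\sqrt5)n}{2}$, which, after clearing the positive denominator and substituting $\ep = (5n)^{-1/2}$, reduces to $(10-4\sqrt5)s^2 + (10-6\sqrt5)s + (5-\sqrt5) \ge 0$ with $s = \sqrt n$: the leading coefficient is positive and the discriminant is exactly zero, so it holds for every real $n \ge 1$ (with equality at $s = (1+\sqrt5)/2$). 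The obstacle is thus purely computational: one must keep the exact value $\delta = \ep/(1-\ep)$ --- not a cruder bound like $2\ep$ --- through this last step, at which point one sees that the constant $(5n)^{-1/2}$ in the hypothesis was chosen precisely so that the relevant quadratic becomes a perfect square.
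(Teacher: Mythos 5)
Your proof is correct, but it takes a genuinely different route from the paper. The paper works directly with the Gram matrix $A = (v_i \cdot v_j) \in S^2(\R^m)$: since $A$ has rank at most $n$, the matrix $A - I$ has eigenvalue $-1$ with multiplicity at least $m - n$, giving the lower bound $\|A - I\|_2^2 \geq m - n$; the hypothesis gives the upper bound $\|A - I\|_2^2 \leq m^2/(5n)$; the resulting quadratic $m - n \leq m^2/(5n)$ excludes the open interval $\bigl((5-\sqrt5)n/2, (5+\sqrt5)n/2\bigr)$, and a small extra step (pass to a subfamily whose size is an integer inside this interval, which has length $\sqrt 5 n > 1$) rules out the upper branch. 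Your argument instead normalizes, forms the frame operator $S = \sum_i \hat v_i \otimes \hat v_i \in S^2(\R^n)$, and applies Cauchy--Schwarz to its $\leq n$ nonzero eigenvalues (the Welch-type bound $(\trace S)^2 \leq n\,\trace S^2$). This produces a one-sided linear bound on $m$ directly, so no subfamily/integer argument is needed --- a genuine structural advantage --- at the cost of carrying the exact normalization loss $\delta = \ep/(1-\ep)$ through the algebra, and of the final verification that the quadratic $(10-4\sqrt5)s^2 + (10-6\sqrt5)s + (5-\sqrt5)$ has nonnegative discriminant (in fact zero, with double root at the golden ratio). Both proofs hinge on the rank bound and Hilbert--Schmidt norms, but the paper's is shorter because the $-1$-multiplicity argument directly yields the sharp constant without normalization, whereas yours illuminates why $(5n)^{-1/2}$ is the exact threshold at which the Welch bound closes.
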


\begin{proof}
   Let $A \in S^2(\R^m)$ be given by $A_{ij} = v_i \cdot v_j$.  Since the matrix $A$ has rank at most $n$, the matrix $A - I$ has eigenvalue $-1$ with multiplicity at least $m - n$ and Hilbert-Schmidt norm $\|A - I\|_2^2 \geq m - n$.  On the other hand, the hypothesis gives $\|A - I\|_2^2 = \sum_{ij} (v_i \cdot v_j - \delta_{ij})^2 \leq m^2 / (5 n)$.  In particular, $m - n \leq m^2/(5n)$ which implies that $m$ can not be in the interval $((5-\sqrt 5)n/2,(5+\sqrt 5)n/2)$.  Since this interval contains a positive integer whenever $n \geq 1$, we must have that $m$ is less than or equal to its left endpoint.
\end{proof}

We need the following weak unique continuation bound.  The advantage over \tref{continuation} is that it holds a priori.

\begin{lemma}
\label{l.crudecontinuation}
For every integer $K \geq 1$, if
\begin{enumerate}
\item $L \geq C_K L' \geq L' \geq C_K$
\item $Q \subseteq \Z^2$ with $\ell(Q) = L$
\item $Q'_k \subseteq Q$ with $\ell(Q'_k) = L'$ for $k = 1, ..., K$
\item $H_Q \psi = \lambda \psi$,
\end{enumerate}
then
\begin{equation*}
\| \psi \|_{\ell^\infty(Q')} \geq e^{-C_K L'} \| \psi \|_{\ell^\infty(Q)}
\end{equation*}
holds for some $2Q' \subseteq Q \setminus \cup_k Q'_k$ with $\ell(Q') = L'$.
\end{lemma}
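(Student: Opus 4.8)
The plan is to propagate a lower bound on $|\psi|$ from an extremum of $\psi$ into a ``good'' square, using only \lref{exponential} together with the $90^\circ$ rotational symmetry of the equation $H\psi=\lambda\psi$ on $\Z^2$. First I would normalize: set $M=\|\psi\|_{\ell^\infty(Q)}$, note that the statement is trivial if $M=0$, and rescale so that $M=1$; fix $x_0\in Q$ with $|\psi(x_0)|=1$. If $x_0$ happens to lie at $\dist\ge 2L'$ from $\partial Q$ and from every $Q'_k$, we are already done with $Q'$ the tilted square of side $L'$ centred at $x_0$. So the whole issue is the case where $x_0$ is trapped near $\partial Q$ or near the obstacles $Q'_k$.

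The tool. Since $H_Q\psi=\lambda\psi$ is the same as $H\psi=\lambda\psi$ holding at every point of $Q$ (with $\psi$ extended by zero), \lref{exponential} applies to every tilted rectangle whose interior lies in $Q$, and, rotating, data on any of its four corner $L$-shapes controls it. This yields an a priori ``speed of propagation'' estimate: if $R\subseteq Q$ is a tilted rectangle of side lengths $a\ge b$, $\Gamma$ one of its corner $L$-shapes, and $z$ a point in the part of $R$ cut off by $\Gamma$, then $|\psi(z)|\le e^{Cb\log a}\|\psi\|_{\ell^\infty(\Gamma)}$. The point I want is that a \emph{thin} rectangle is cheap: taking $b\le \ep_K L'/\log \ell(Q)$ and $a\le \ell(Q)$ makes $Cb\log a\le C\ep_K L'$, so such a rectangle transports a lower bound on $|\psi|$ with loss only $e^{O_K(L')}$ — with no logarithmic factor, which is exactly what the clean exponent $e^{-C_KL'}$ in the statement demands.

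The geometry and the chain. Because $\ell(Q)=L\ge C_KL'$, the $\tfrac32L'$-fattenings of the $Q'_k$ have total area $\le 9K(L')^2\ll (C_KL')^2$, so a Vitali-type packing argument (run inside the half- or quarter-square part of $Q$ on one side of $x_0$, if $x_0$ is near $\partial Q$) produces a tilted square $Q'$ with $\ell(Q')=L'$, $2Q'\subseteq Q\setminus\bigcup_kQ'_k$, and $\dist(Q',x_0)\le C_KL'$. I would then connect $x_0$ to $Q'$ by a chain of $O_K(1)$ thin tilted rectangles $R_1,\dots,R_m\subseteq Q$: $R_1$ has $x_0$ in a corner-interior; for $i\ge2$ the rectangle $R_i$ has the corner $L$-shape $\Gamma_{i-1}$ of $R_{i-1}$ sitting in a corner-interior; and $\Gamma_m$ lies in the ``good region'' $G=\bigcup\{Q':2Q'\subseteq Q\setminus\bigcup_kQ'_k\}$. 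Since $\psi$ being large at a point of $G$ forces $\psi$ to be large inside some admissible $Q'$, iterating the propagation estimate along the chain gives $\|\psi\|_{\ell^\infty(Q')}\ge e^{-m\cdot O_K(L')}\ge e^{-C_KL'}$, as required.

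The hard part will be the construction and control of this chain. \lref{exponential} only tells us that $|\psi|$ is not too small \emph{somewhere} on the $L$-shaped set $\Gamma_{i-1}$, whose long arm has length comparable to the distance transported so far; if one is careless these lengths — and hence the per-link losses $e^{Cb_i\log a_i}$ — snowball along the chain, and, worse, $\Gamma_m$ could straddle $\partial Q$ or one of the $Q'_k$ rather than sitting inside $G$. I expect this is handled by keeping every $R_i$ thin and of length $\le\ell(Q)$ (so that each link costs only $e^{O_K(L')}$ regardless of its length), by cutting each long arm into sub-strips of length $\le L'$ and noting that it is enough for every sub-strip to lie in $G$ — which occupies all but an $O(\sqrt K\,L'/L)$ fraction of $Q$ — and by steering the hop directions (there are four available, by symmetry) so that the chain, of length $O(\sqrt K)$, threads past the obstacles while staying inside the convex set $Q$, which is possible since $L\ge C_KL'\gg\sqrt K L'$.
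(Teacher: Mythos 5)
Your approach is a genuinely different route from the paper's, and I believe it contains a gap that cannot be repaired with \lref{exponential} alone.

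The paper proves this lemma without invoking \lref{exponential} at all. Instead it recalls two propagation estimates from Bourgain--Klein \cite{Bourgain-Klein} whose exponential cost is \emph{linear} in the side length: for a rectangle $[0,a]\times[0,b]$, the maximum of $|\psi|$ over the rectangle is at most $e^{C(a+b)}$ times its maximum over a one-layer boundary annulus, and for a $45^\circ$-triangle $T_a$, $\max_{T_a}|\psi|\leq e^{Ca}\max_{\partial^b T_a}|\psi|$. Applied to annuli and triangles of diameter $O(L')$ near $\partial Q$, and to an annulus of side $\lesssim KL'$ around the obstacle region, these carry the lower bound from the maximizer of $|\psi|$ in $Q$ back to $E=\bigcup_{2Q'\subseteq Q\setminus\cup_kQ'_k}Q'$ with loss $e^{C_KL'}$. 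The essential feature is that these estimates propagate from a set hugging a whole side (or the whole boundary) of the region, and their exponent carries no $\log$.

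Your plan replaces both of these by \lref{exponential}, whose cost is $e^{Cb\log a}$, and the extra $\log a$ is fatal for the stated bound $e^{-C_KL'}$. \lref{exponential} only guarantees that $|\psi|\gtrsim e^{-Cb\log a}$ \emph{somewhere} on the entire corner $L$-shape $\Gamma=\partial^w R$, with no control on where; so your argument closes only if all of $\Gamma\cap Q$ lies inside $E$. To keep one hop at cost $e^{O_K(L')}$ with $a$ up to $\ell(Q)$ you must take $b\lesssim L'/\log L$, so $\Gamma$ lies within $b\ll L'$ of $x_0$ in the thin direction, and when $x_0$ is within $\sim L'$ of $\partial Q$ (or of an $8Q'_k$) part of $\Gamma\cap Q$ necessarily stays inside the $L'$-margin that $E$ excludes. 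The chaining move you describe --- placing $\Gamma_{i-1}$ in the corner-interior of $R_i$ --- cannot change the thin orientation (since $R_i$ must span the full long arm of $\Gamma_{i-1}$), forces $b_i$ to grow (since $R_i$ must also engulf the short arm of $\Gamma_{i-1}$), and advances the frontier by only $O(b_i)$ per hop. Summing the costs $e^{Cb_i\log a_i}$ shows that the total displacement affordable at cumulative loss $e^{CL'}$ is only $O(L'/\log L)$, so the chain never escapes the margin; equivalently, reaching $E$ requires $\gtrsim\log L$ hops, contradicting the $O_K(1)$ chain length your count relies on. The log-free exponents of the annulus and triangle estimates are exactly what the paper's argument hinges on, and they are not available from \lref{exponential}.
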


\begin{proof}
For simplicity, we assume that $L$ is an integer multiple of $L'$.  Moreover, we assume that $Q$ and $Q'_k$ are aligned squares in the sense that the coordinates of their southwest corner are divisible by their side length.  Henceforth $Q'$ always denotes an aligned square of side length $L'$.

We recall two elementary observations, both of which can be found in Bourgain--Klein \cite{Bourgain-Klein}.  The first is that, if $H \psi = \lambda \psi$ in a rectangle $[0,a] \times [0,b]$, then
\begin{equation*}
\max_{[0,a] \times [0,b]} |\psi| \leq e^{C(a+b)} \max_{[0,a] \times [0,b] \setminus [1,a-2] \times [1,b-2]} |\psi|.
\end{equation*}
The second is that, if $H \psi = \lambda \psi$ in the triangle $T_a = \{ (x,y) : 0 \leq x \leq a - |y| \}$ with base $\partial^b T_a = \{ (x,y) \in T_a : x \leq 1 \}$, then
\begin{equation*}
\max_{T_a} |\psi| \leq e^{Ca} \max_{\partial^b T_a} |\psi|.
\end{equation*}
Both of these observations follow by iterating the equation $H \psi = \lambda \psi$.  We use them to prove the following two claims, from which the lemma easily follows.

\begin{figure}[t]
\begin{tikzpicture}[scale=1/4]
\fill[lightgray] (-4,-2) rectangle (6,8);
\fill[white] (-3,-1) rectangle (5,7);
\draw (0,0) rectangle (20,20);
\draw (10,10) node {$Q$};
\draw[thick,dotted] (2,2) -- (14,2) -- (14,6) -- (18,6) -- (18,18) -- (14,18) -- (14,14) -- (10,14) -- (10,18) -- (2,18) -- (2,2);
\draw (0,2) rectangle (2,4);
\draw (1,3) node {$Q'$};
\draw (-7,-5) rectangle (9,11);
\draw (-1,7.5) -- (2.5,4) -- (2.5,11) -- (-1,7.5);
\draw (5.5,-1) -- (9,2.5) -- (2,2.5) -- (5.5,-1);
\end{tikzpicture}
\caption{A schematic for the proof of \lref{crudecontinuation}.  The set $E$ is enclosed by the dotted line.  The annulus $5Q' \setminus 4Q'$ is depicted in gray and its intersection with the square $Q$ is covered by the set $E$ and two triangles.}
\label{f.crude}
\end{figure}

\begin{claim}
$\| \psi \|_{\ell^\infty(Q \setminus \cup_k 8 Q'_k)} \leq e^{C L'} \| \psi \|_{\ell^\infty(E)},$
where
\begin{equation*}
E = \bigcup_{2Q' \subseteq Q \setminus \cup_k Q'_k} Q'.
\end{equation*}
\end{claim}

Fix $x \in Q \setminus \cup_k 8 Q'_k$ and assume $x \notin E$. Observe there is a $Q' \subseteq Q$ such that $x \in Q'$, $2Q' \not\subseteq Q$, and $8Q' \cap \cup_k Q'_k = \emptyset$.  Observe that $Q'$ must be adjacent to the boundary of $Q$.  Moreover, observe that $(5Q' \setminus 4Q') \cap Q$ can be covered by $E$ together with two rotation-translations of triangles $T_{\frac74 L'}$ such that the corresponding rotation-translations of the base $\partial^b T_{\frac74 L'}$ is contained in $E$.  In particular, by the second observation, $|\psi| \leq e^{CL'} \| \psi \|_{\ell^\infty(E)}$ in $(5Q' \setminus 4Q') \cap Q$.  Applying the first observation gives $|\psi| \leq e^{CL'} \| \psi \|_{\ell^\infty(E)}$ in $5Q' \cap Q.$  See \fref{crude} for a schematic of this computation.

\begin{claim}
$\| \psi \|_{\ell^\infty(Q)} \leq e^{C_K L'} \| \psi \|_{\ell^\infty(Q \setminus \cup_k 8 Q'_k)}.$
\end{claim}

Fix $x \in 8 Q'_k \cap Q$.  Observe there is a $Q' \subseteq Q$ and an integer $1 \leq n \leq C K L'$ such that $x \in Q'$ and $(n+1) Q' \setminus n Q'$ does not intersect $\cup_k 8 Q_k'$.  Since we assumed $L \geq C K L'$ large, we can apply the first observation to conclude $|\psi| \leq e^{C K L'} \| \psi \|_{\ell^\infty(Q \setminus \cup_k 8 Q'_k)}$ on $(n+1) Q'$.
\end{proof}

We now prove our analogue of the Wegner estimate  \cite{Bourgain-Kenig}*{Lemma 5.1}.  This brings together all of our new ingredients.

\begin{lemma}
\label{l.wegner}
If
\begin{enumerate}
\item $\eta > \ep > \delta > 0$ small
\item $K \geq 1$ integer
\item $L_0 \geq \cdots \geq L_5 \geq C_{\eta,\ep,\delta,K}$ dyadic scales with $L_k^{1-2\delta} \geq L_{k+1} \geq L_k^{1-\ep}$
\item $Q \subseteq \Z^2$ with $\ell(Q) = L_0$
\item $Q'_1, ..., Q'_K \subseteq Q$ with $\ell(Q'_k) = L_3$
\item $G \subseteq \cup_k Q'_k$ with $0 < |G| \leq L_0^{\delta}$.
\item $F \subseteq \Z^2$ is $\eta$-regular in every $Q' \subseteq Q \setminus \cup_k Q'_k$ with $\ell(Q') = L_3$
\item $V : Q \to [0,1]$, $V_F = v$, $|\lambda - \bar \lambda| \leq e^{-L_5}$, and $H_Q \psi = \lambda \psi$ implies
\begin{equation*}
e^{L_4} \| \psi \|_{\ell^2(Q \setminus \cup_k Q'_k)} \leq \| \psi \|_{\ell^2(Q)} \leq (1 + L_0^{-\delta}) \| \psi \|_{\ell^2(G)},
\end{equation*}
\end{enumerate}
then
\begin{equation*}
\P[\| R_Q \| \leq e^{L_1} | V_F = v] \geq 1 - L_0^{C \ep - 1/2},
\end{equation*}
where $R_Q = (H_Q - \bar \lambda)^{-1}$ is the resolvent of $H_Q$ at energy $\bar \lambda$.
\end{lemma}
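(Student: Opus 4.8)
The plan is to freeze $V_F=v$, to condition further on the potential on $Q\setminus(F\cup S)$ where $S$ is the set of free sites contained in $\bigcup_k Q'_k$, and to prove the estimate for the resulting uniform measure on $\{0,1\}^S$ by showing that
\[
\mathcal A=\{A\subseteq S:\ \|R_Q(V^A)\|>e^{L_1}\}
\]
is $\rho$-Sperner in the sense of \tref{sperner} with $\rho\ge cL_3^{-1/2}(\log L_3)^{-1/2}$; here $V^A$ equals $v$ on $F$, equals the conditioned values on $Q\setminus(F\cup S)$, and equals $\id_A$ on $S$. Since $|S|\ge cL_3^2$ and $L_3\ge L_0^{1-C\ep}$, \tref{sperner} then yields $\P[\|R_Q\|>e^{L_1}\mid V_F=v]\le|S|^{-1/2}\rho^{-1}\le L_0^{C\ep-1/2}$ once the negligible events below are absorbed. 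Recall $\|R_Q\|>e^{L_1}$ exactly when $H_Q$ has an eigenvalue within $e^{-L_1}$ of $\bar\lambda$, and $e^{-L_1}\le e^{-L_5}$, so hypothesis (8) controls every such eigenpair.

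I would first prepare two ingredients. By hypothesis (8) every eigenfunction $\psi$ of $H_Q(V)$ with $|\lambda-\bar\lambda|\le e^{-L_5}$ concentrates in $\ell^2$ on $G$; hence the restrictions to $G$ of an orthonormal family of such eigenfunctions are orthonormal up to error $O(L_0^{-\delta})$, and \lref{kl} bounds the number of eigenvalues of $H_Q(V)$ in $[\bar\lambda-e^{-L_5},\bar\lambda+e^{-L_5}]$ by $C|G|\le CL_0^\delta$, uniformly in $V$ with $V_F=v$. This is the one place the continuous-variable formalism is needed: writing the potential as a convex combination of the Bernoulli and a uniform one and keeping all constants independent of the weight, a Wegner-type spectral average for the uniform part extends the bound, uniformly, to the number of eigenvalues in the larger window $[\bar\lambda-e^{-L_3},\bar\lambda+e^{-L_3}]$, which is what manufactures the spectral gaps used below. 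Second, applying \tref{continuation} in each $Q'_k$ — legitimate since inside $\bigcup_k Q'_k$ the frozen set is negligible and the restriction of any $H_Q$-eigenfunction solves $H\psi=\lambda\psi$ there — together with \lref{crudecontinuation} to carry the $\ell^\infty$ norm of $\psi$ into whichever $Q'_k$ supports its $\ell^2$ mass, shows that outside an event of probability $\le Ke^{-\ep L_3^{1/4}}$ every such $\psi$ satisfies
\[
\bigl|\{x\in S:\ |\psi(x)|\ge e^{-\alpha L_3\log L_3}\|\psi\|_{\ell^\infty(Q)}\}\bigr|\ \ge\ \ep L_3^{3/2}(\log L_3)^{-1/2}.
\]
Since the scale ordering forces $\alpha L_3\log L_3\ll L_1$, at each such $x$ the first-order shift $\psi(x)^2$ exceeds the window width $2e^{-L_1}$.

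The heart of the proof is verifying the $\rho$-Sperner property, and this is what \lref{minmax} is for. Given $A\in\mathcal A$, pick an eigenvalue $\lambda_i=\lambda_i(V^A)$ in the window with eigenfunction $\psi_i$; from the uniform eigenvalue count, a pigeonhole supplies a gap above the window and a gap below it, each free of eigenvalues of $H_Q(V^A)$, with all eigenvalues within $e^{-L_5}$ of $\bar\lambda$ sandwiched between them. For a site $x\in S\setminus A$ on the list of the previous paragraph at which, in addition, the eigenfunctions belonging to the transition band between the upper gap and a fixed level above $\bar\lambda$ place little weight — most listed sites qualify, since there are at most $CL_0^\delta$ such eigenfunctions and each is large at few points — one invokes \lref{minmax} (with matrix $H_Q(V^A)$, perturbed site $k=x$, the pair $(i,j)$ indexing eigenvalues in the window, $r_1,r_2$ the edges of the upper gap, $r_3$ a lower bound for $\psi_i(x)^2$, $r_4$ the transition-band weight at $x$, and $r_5$ the fixed level), the quantitative constraint $r_1\le c\min\{r_3r_5,r_2r_3/r_4\}$ being arranged using the separation $L_5\ll L_4\ll L_3\ll L_0$. \lref{minmax} gives $\trace\id_{[r_1,\infty)}(H_Q(V^A))<\trace\id_{[r_1,\infty)}(H_Q(V^A)+e_x\otimes e_x)$; this trace only grows as further sites of $S$ are raised to one, and the gap below the window keeps a lower eigenvalue from slipping back in, so raising $V_x$ (alone or together with other sites) permanently expels $\lambda_i$ from the window. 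Hence $x$ lies in the set $B(A)$ of the $\rho$-Sperner definition, and there are at least $\rho(|S|-|A|)$ such $x$ whenever $|A|$ is not too large; configurations $A$ with $|A|$ comparable to $|S|$ must be handled by a companion argument. \tref{sperner} then concludes.

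I expect the main obstacle to be, in essence, eigenvalue interlacement and the bookkeeping it forces. Raising $V_x$ pushes $\lambda_i$ above the window, but absent a spectral gap a lower eigenvalue can drift in from below, invalidating the naive one-flip-kills-it picture; this is exactly why the proof must first manufacture gaps on both sides of the window — which is what ties together \lref{kl} and the continuous-variable eigenvalue count — and why the five thresholds in \lref{minmax} have to be balanced so carefully against the concentration scale $e^{-\alpha L_3\log L_3}$, the window $e^{-L_1}$, and the gap widths. Making the $\rho$-Sperner property survive arbitrary further additions of sites at the advertised density $\rho$, and handling the configurations with $|A|$ close to $|S|$, are where I would expect the argument to need the most care. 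A secondary, purely technical obstacle is that the conditioning on the potential off $S$, the union bounds over the $Q'_k$, and the transfer of the regularity hypotheses must be arranged without disturbing hypotheses (7)–(8) or the applicability of \tref{continuation}; this is the reason the statement is phrased uniformly over $v$ and over $\eta$-regular $F$.
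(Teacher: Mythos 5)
Your proposal has the geometry of the eigenvalue variation reversed, which is a fatal error rather than a bookkeeping issue. You propose to freeze the potential on $Q\setminus(F\cup S)$ and run the Sperner argument over $S=(\cup_k Q'_k)\setminus F$, i.e.\ over free sites \emph{inside} the defect squares. The paper does the opposite: its first step is to show one may assume $\cup_k Q'_k\subseteq F$ (by averaging over all extensions of $v$ to $\cup_k Q'_k\setminus F$), so all free sites lie in $Q\setminus\cup_k Q'_k$, and the Sperner/eigenvalue-variation argument runs there. Several things force this orientation. First, hypothesis (7) gives $\eta$-regularity of $F$ only in $L_3$-squares $Q'\subseteq Q\setminus\cup_k Q'_k$; nothing in the hypotheses controls $F$ inside the defects, and in the multiscale application the $Q'_k$ are entirely frozen, so $S$ can be empty. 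Your invocation of \tref{continuation} ``in each $Q'_k$ --- legitimate since inside $\bigcup_k Q'_k$ the frozen set is negligible'' is therefore unjustified: there is no negligibility hypothesis inside $\cup_k Q'_k$, and in fact the opposite is expected. Second, the left inequality in hypothesis (8) says $\|\psi\|_{\ell^2(Q\setminus\cup_k Q'_k)}\le e^{-L_4}\|\psi\|_{\ell^2(Q)}$, and this smallness \emph{outside} the defects is precisely what makes the eigenvalue-variation step work: it bounds the derivative $d\lambda_{k_i}/dt=\|\psi_{k_i}\|_{\ell^2(Q\setminus F)}^2$, so eigenvalues that are ever within $e^{-L_2}$ of $\bar\lambda$ stay within $e^{-L_4}$ for \emph{all} Bernoulli configurations $w$; that stability plus the right inequality in (8) and \lref{kl} is what bounds the number of relevant indices by $C|G|$. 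If you flipped sites inside $\cup_k Q'_k$ where $\psi$ is $\ell^2$-concentrated, this derivative bound would be large and the eigenvalue count would not be controlled as $w$ varies. Relatedly, your description of the continuous-variable formalism as providing ``a Wegner-type spectral average'' is not what is used; it is used only to make the eigenvalue path $t\mapsto\lambda_{k_i}(w_t)$ well-defined and to integrate the derivative bound above.

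There are also two secondary gaps. You note that one must ``manufacture gaps on both sides of the window'' but do not actually produce the decomposition needed to make the Sperner argument rigorous: the paper slices the bad event into pieces $\mathcal E_{k_1,k_2,\ell,i}$, indexed by the pair $(k_1,k_2)$ of bracketing eigenvalue indices, a dyadic annulus level $\ell$ at which the gap appears (whose existence comes from the pigeonhole over the $\le CL_0^\delta$ relevant indices), and the Bernoulli value $i$ at the flipped site, and then applies \tref{sperner} to each slice separately with $\rho=\tfrac12 L_0^{-2}L_4^{3/2}$ and $n=|Q\setminus F|$. Without fixing $(k_1,k_2,\ell,i)$, the set $\mathcal A$ you define is not $\rho$-Sperner in the required sense, because two comparable configurations $w\le w'$ can both violate the resolvent bound with the offending eigenvalue carried by \emph{different} indices. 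Finally, your claimed numerics $|S|\ge cL_3^2$ and $\rho\ge cL_3^{-1/2}(\log L_3)^{-1/2}$ are not available: as noted, $|S|$ may be zero, and the correct density $\rho$ comes from \tref{continuation} applied to $L_3$-squares outside the defects together with \lref{crudecontinuation}, yielding at least $L_4^{3/2}$ large sites in $Q\setminus F$, not a density relative to the defect squares.
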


\begin{remark}
The scales in the above lemma have the following interpretations:
\begin{center}
\begin{tabular}{rl}
$L_0$ & large scale \\
$e^{L_1}$ & large scale resolvent bound \\
$e^{-L_2}$ & unique continuation lower bound \\
$L_3$ & small scale \\
$e^{-L_4}$ & localization smallness \\
$e^{-L_5}$ & localization interval \\
$L_0^{\delta}$ & localization support \\
\end{tabular}
\end{center}
These are set up to be compatible with the multiscale analysis in \sref{multiscale}.
\end{remark}

\begin{proof}
Throughout the proof, we allow the constants $C > 1 > c > 0$ to depend on $\eta, \ep, \delta, K$.  Let $\lambda_1(H_Q) \geq \cdots \geq \lambda_{L_0^2}(H_Q)$ denote the eigenvalues of $H_Q$.  Choose eigenvectors $\psi_k(H_Q) \in \R^Q$ such that $\| \psi_k \|_{\ell^2(Q)} = 1$ and $H_Q \psi_k = \lambda_k \psi_k$.  We may assume that $\lambda_k$ and $\psi_k$ are deterministic functions of the potential $V_Q \in [0,1]^Q$.

\begin{claim}
We may assume $\cup_k Q'_k \subseteq F$.
\end{claim}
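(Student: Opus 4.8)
The plan is to reduce the lemma to the stated special case by enlarging the frozen set. Put $F' = F \cup \bigcup_k Q'_k$, let $E = \bigl(\bigcup_k Q'_k\bigr)\setminus F$ (a finite set), and for $w : E \to \{0,1\}$ write $v\cup w : F' \to \{0,1\}$ for the combined configuration. The first step is to check that every hypothesis of the lemma survives the replacement of $(F,v)$ by $(F',v\cup w)$, for each such $w$. Hypotheses (1)--(6) and the data $Q$, $Q'_1,\dots,Q'_K$, $G$ do not involve $F$ at all. For hypothesis (7): any tilted square $Q'\subseteq Q\setminus\bigcup_k Q'_k$ with $\ell(Q') = L_3$ is disjoint from $\bigcup_k Q'_k$, so $F'\cap Q' = F\cap Q'$; since $\eta$-sparsity in a tilted rectangle depends only on the intersection with that rectangle and $\eta$-regularity in $Q'$ only refers to sparsity in tilted subsquares of $Q'$, the set $F'$ is $\eta$-regular in such a $Q'$ exactly when $F$ is. For hypothesis (8), which is a deterministic implication quantified over all $V : Q\to[0,1]$ with $V_F = v$: since $\{V_{F'} = v\cup w\}\subseteq\{V_F = v\}$, the implication continues to hold when we further require $V_{F'} = v\cup w$.

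Granting the lemma in the case $\bigcup_k Q'_k\subseteq F$ --- which now applies to $(F',v\cup w)$, as $\bigcup_k Q'_k\subseteq F'$ --- we obtain
\[
\P[\|R_Q\|\leq e^{L_1}\mid V_{F'} = v\cup w] \geq 1 - L_0^{C\ep - 1/2} \quad\text{for every } w : E\to\{0,1\}.
\]
The second step is to average over the finitely many $w$. Since $E$ is disjoint from $F$ and the potential is a product measure, $V_E$ is independent of $V_F$, and the tower property gives
\[
\P[\|R_Q\|\leq e^{L_1}\mid V_F = v] = \sum_{w : E\to\{0,1\}} \P[\|R_Q\|\leq e^{L_1}\mid V_{F'} = v\cup w]\,\P[V_E = w] \geq 1 - L_0^{C\ep - 1/2},
\]
which is exactly the conclusion of the lemma for $(F,v)$. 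Hence we may assume $\bigcup_k Q'_k\subseteq F$ from here on; in particular $G\subseteq F$, so the potential on $G$ is also frozen, which will be convenient later.

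I do not expect a genuine obstacle in this reduction: it is the routine ``add sites to the frozen set and condition'' maneuver. The only point needing care is that enlarging $F$ inside $\bigcup_k Q'_k$ must not violate the regularity hypothesis (7) --- and this is precisely why (7) only demands regularity on tilted $L_3$-squares disjoint from $\bigcup_k Q'_k$.
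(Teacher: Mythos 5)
Your proposal is correct and is essentially the paper's own argument: the paper sets $F' = \cup_k Q'_k \setminus F$ and observes $\P[\mathcal E \mid V_F = v] = 2^{-|F'|}\sum_{v'}\P[\mathcal E \mid V_{F\cup F'} = v\cup v']$, then estimates each term. You add useful detail by explicitly verifying that hypotheses (7) and (8) survive the enlargement of the frozen set — the paper leaves this implicit — and that verification is right: (7) because the relevant $L_3$-squares are disjoint from $\cup_k Q'_k$, and (8) because conditioning on more sites only shrinks the class of admissible $V$.
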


Let $F' = \cup_k Q'_k \setminus F$ and observe that
\begin{equation*}
\P[\mathcal E | V_F = v] = 2^{-|F'|} \sum_{v' : F' \to \{ 0, 1 \}} \P[\mathcal E | V_{F \cup F'} = v \cup v']
\end{equation*}
holds for all events $\mathcal E$.  Thus, it suffices to estimate each term in the sum.

\begin{claim}
$\P[\mathcal E_{\mathrm{uc}}|V_F = v] \geq 1 - e^{-L_0^\ep},$ where $\mathcal E_{\mathrm{uc}}$ denotes the event that
\begin{equation*}
|\{ |\psi| \geq e^{-L_2} \| \psi \|_{\ell^\infty(Q)} \} \setminus F| \geq L_4^{3/2}
\end{equation*}
holds whenever $|\lambda - \bar \lambda| \leq e^{-L_5}$ and $H_Q \psi = \lambda \psi$.
\end{claim}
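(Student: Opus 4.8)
The plan is to reduce the claim to the random unique continuation Theorem~\ref{t.continuation} applied at the \emph{small} scale $L_3$, using the a priori estimate Lemma~\ref{l.crudecontinuation} as a bridge. We are forced through scale $L_3$ because hypothesis~(7) only controls the regularity of $F$ in squares of side $L_3$ that avoid $\cup_k Q'_k$: the set $F$ may be badly irregular on $Q$ as a whole, so Theorem~\ref{t.continuation} cannot be applied to $Q$ itself. The key idea is that Lemma~\ref{l.crudecontinuation} always hands us a scale-$L_3$ sub-square away from $\cup_k Q'_k$ on which $\psi$ still carries a non-negligible fraction of its global sup norm, and there Theorem~\ref{t.continuation} does the work.

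Concretely, I would let $\mathcal G$ be the intersection, over all axis-parallel (not necessarily dyadic) squares $\tilde Q \subseteq Q \setminus \cup_k Q'_k$ with $\ell(\tilde Q) = L_3$, of the events $\mathcal E_{\mathrm{uc}}(\tilde Q, F)$ of Theorem~\ref{t.continuation}, instantiated with its free small parameter equal to $\eta$. By hypothesis~(7), $F$ is $\eta$-regular in each such $\tilde Q$, so Theorem~\ref{t.continuation} gives $\P[\mathcal E_{\mathrm{uc}}(\tilde Q,F)^c \mid V_F = v] \le e^{-\eta L_3^{1/4}}$. There are at most $L_0^2$ such squares, and $L_3 \ge L_0^{(1-\ep)^3} \ge L_0^{1-3\ep}$ with $\ep$ small (so that $(1-3\ep)/4 > \ep$), so a union bound yields $\P[\mathcal G \mid V_F = v] \ge 1 - L_0^2 e^{-\eta L_3^{1/4}} \ge 1 - e^{-L_0^\ep}$ for $L_0 \ge C_{\eta,\ep,\delta,K}$. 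It then remains to prove the deterministic inclusion $\mathcal G \subseteq \mathcal E_{\mathrm{uc}}$.

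For that, assume $\mathcal G$ and suppose toward a contradiction that some $\psi$ with $H_Q\psi = \lambda\psi$, $|\lambda-\bar\lambda| \le e^{-L_5}$, normalized by $\|\psi\|_{\ell^\infty(Q)} = 1$, satisfies $|\{|\psi| \ge e^{-L_2}\}\setminus F| < L_4^{3/2}$. Applying Lemma~\ref{l.crudecontinuation} with the squares $Q'_1,\dots,Q'_K$ and output side length $L_3$ (the conditions $L_0 \ge C_K L_3 \ge L_3 \ge C_K$ hold because $L_3 \le L_0^{1-c\delta}$ and $L_3 \ge L_5 \ge C_{\eta,\ep,\delta,K}$) produces a square $\hat Q = 2Q''$ with $\ell(Q'') = L_3$, $\hat Q \subseteq Q\setminus\cup_k Q'_k$, and a point $x^\ast \in Q'' = \tfrac12\hat Q$ with $|\psi(x^\ast)| \ge e^{-C_K L_3}$. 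Since $x^\ast$ sits in the concentric half of the side-$2L_3$ square $\hat Q$, I can slide a side-$L_3$ square $\tilde Q$ so that $x^\ast \in \tfrac12\tilde Q$ and $\tilde Q \subseteq \hat Q \subseteq Q\setminus\cup_k Q'_k$; then $\mathcal E_{\mathrm{uc}}(\tilde Q,F)$ holds. (Here note $H_Q\psi=\lambda\psi$ is literally the equation $H\psi=\lambda\psi$ in $Q$ with $\psi$ extended by zero, so there is no issue with $\tilde Q$ meeting $\partial Q$.) Next I would check the inputs of Theorem~\ref{t.continuation} on $\tilde Q$: the one-square case of the definition of $\eta$-regular gives $F$ is $\eta$-sparse in $\tilde Q$, hence $|\tilde Q\setminus F| \ge (1-\eta)L_3^2$, and with $L_4 \le L_3^{1-2\delta}$ this gives $L_4^{3/2} \le L_3^{3/2-3\delta} \le \eta(L_3\log L_3)^{-1/2}|\tilde Q\setminus F|$ for $L_3$ large, so $|e^{L_2}\psi| \le 1$ on a $1-\eta(L_3\log L_3)^{-1/2}$ fraction of $\tilde Q\setminus F$; and $L_5 \ge L_4^{1-\ep} \ge L_3^{(1-\ep)^2} \ge C(L_3\log L_3)^{1/2}$ handles the energy window. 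Thus Theorem~\ref{t.continuation} gives $|e^{L_2}\psi| \le e^{C L_3\log L_3}$ on $\tfrac12\tilde Q$, so $e^{-C_K L_3} \le |\psi(x^\ast)| \le e^{C L_3\log L_3 - L_2}$, forcing $L_2 \le C_K L_3\log L_3$ — which contradicts $L_2 \ge L_3^{1/(1-2\delta)} \ge L_3^{1+2\delta}$ once $L_3 \ge C_{\eta,\ep,\delta,K}$.

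The only real content is the use of crude continuation to produce the square $\tilde Q$ on which $\psi$ stays comparable to its global sup; everything else is bookkeeping. The point I would be most careful about is the chain of scale inequalities: $e^{-L_2}$ must beat the scale-$L_3$ continuation loss $e^{C L_3\log L_3}$, $e^{-L_5}$ must lie below $e^{-C(L_3\log L_3)^{1/2}}$, $L_4^{3/2}$ must be negligible against $\eta L_3^{3/2}(\log L_3)^{-1/2}$, and $L_0^2 e^{-\eta L_3^{1/4}}$ must beat $e^{-L_0^\ep}$. All four follow from $L_k^{1-2\delta} \ge L_{k+1} \ge L_k^{1-\ep}$ once the common threshold $C_{\eta,\ep,\delta,K}$ is large and $\ep$ is small.
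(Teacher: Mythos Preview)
Your proof is correct and follows essentially the same approach as the paper: intersect the events $\mathcal E_{\mathrm{uc}}(Q',F)$ from Theorem~\ref{t.continuation} over all $L_3$-squares $Q' \subseteq Q\setminus\cup_k Q'_k$, control the failure probability by a union bound, and for the deterministic inclusion use Lemma~\ref{l.crudecontinuation} to locate an $L_3$-square away from the defects on which $\psi$ retains a $e^{-C_K L_3}$ fraction of its global sup, then invoke Theorem~\ref{t.continuation} there. Your sliding step to guarantee $x^\ast \in \tfrac12\tilde Q$ is a point of care the paper glosses over; the only imprecision is the bound $|\tilde Q\setminus F|\ge (1-\eta)L_3^2$, which does not follow literally from the one-square case of $\eta$-regularity, but the weaker $|\tilde Q\setminus F|\ge c L_3^2$ is immediate and suffices for the fraction comparison.
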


Let $\alpha' > 1 > \ep' > 0$ be constants that work in \tref{continuation}.  We may assume $\ep' > \eta$.  By \tref{continuation}, the event
\begin{equation*}
\mathcal E_{\mathrm{uc}}' = \bigcap_{\substack{Q' \subseteq Q \setminus \cup_k Q'_k \\ \ell(Q') = L_3}} \mathcal E_{\mathrm{uc}}(Q',F)
\end{equation*}
satisfies
\begin{equation*}
\P[\mathcal E_{\mathrm{uc}}' | V_F = v] \geq 1 - e^{-\ep' L_3^{1/4} - C \log L_0} \geq 1 - e^{-L_0^\ep}.
\end{equation*}
Thus, it suffices to show $\mathcal E_{\mathrm{uc}} \supseteq \mathcal E_{\mathrm{uc}}'$.

Let us suppose that $\mathcal E_{\mathrm{uc}}'$ holds, $|\lambda - \bar \lambda| \leq e^{-L_5}$, and $H_Q \psi = \lambda \psi$.

\lref{crudecontinuation} provides an $L_3$-square $Q' \subseteq Q \setminus \cup_k Q'_k$
\begin{equation*}
\| \psi \|_{\ell^\infty(\frac12 Q')} \geq e^{-C L_3} \| \psi \|_{\ell^\infty(Q)}.
\end{equation*}
Since $\mathcal E_{\mathrm{uc}}(Q',F)$ holds and $|\lambda - \bar \lambda| \leq e^{-L_5} \leq e^{-\alpha' (L_3 \log L_3)^{1/2}}$, we see that
\begin{equation*}
|\{ |\psi| \geq e^{-\alpha' L_3 \log L_3} \| \psi \|_{\ell^\infty(\frac12 Q')} \} \cap Q' \setminus F| \geq \ep' L_3^{3/2} (\log L_3)^{-1/2}.
\end{equation*}
Thus
\begin{equation*}
|\{ |\psi| \geq e^{- L_2} \| \psi \|_{\ell^\infty(Q)} \} \cap Q \setminus F| \geq L_4^{3/2},
\end{equation*}
which proves the inclusion and the claim.

\begin{claim}
For $1 \leq k_1 \leq k_2 \leq L_0^2$ and $0 \leq \ell \leq C L_0^{\delta}$, we have
\begin{equation*}
\P[\mathcal E_{k_1,k_2,\ell} \cap \mathcal E_{\mathrm{uc}} | V_F = v] \leq C L_0 L_4^{-3/2}
\end{equation*}
where $\mathcal E_{k_1,k_2,\ell}$ denotes the event that
\begin{equation*}
|\lambda_{k_1} - \bar \lambda|, |\lambda_{k_2} - \bar \lambda| < s_\ell
\quad \mbox{and} \quad |\lambda_{k_1-1} - \bar \lambda|, |\lambda_{k_2+1} - \bar \lambda| \geq s_{\ell+1},
\end{equation*}
and $s_\ell = e^{-L_1 + (L_2 - L_4 + C) \ell}$.
\end{claim}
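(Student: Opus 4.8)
The plan is to combine the eigenvalue-variation principle of \lref{minmax} with the generalized Sperner inequality \tref{sperner}. Having already arranged $\bigcup_k Q'_k \subseteq F$, the free sites form $\Lambda = Q\setminus F$, which is disjoint from $G$; identify a potential on $\Lambda$ with the set $A = \{x\in\Lambda : V_x = 1\}$, and let $\mathcal A$ be the family of $A$ for which $\mathcal E_{k_1,k_2,\ell}\cap\mathcal E_{\mathrm{uc}}$ holds given $V_F = v$. We will exhibit $\mathcal A$ as a union of two families, each of which---after possibly complementing---is $\rho$-Sperner with $\rho \geq c L_4^{3/2}/|\Lambda|$. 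Since $|\Lambda|\leq|Q| = L_0^2$, \tref{sperner} then yields $\P[\mathcal A\mid V_F = v] = 2^{-|\Lambda|}|\mathcal A| \leq 2|\Lambda|^{-1/2}\rho^{-1} \leq CL_0 L_4^{-3/2}$, which is the claim.

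The heart of the matter is a monotone eigenvalue variation. Fix $A\in\mathcal A$. On $\mathcal E_{k_1,k_2,\ell}$ one has $\bar\lambda - s_\ell < \lambda_{k_2}\leq\cdots\leq\lambda_{k_1} < \bar\lambda + s_\ell$ together with $\lambda_{k_1-1}\geq\bar\lambda+s_{\ell+1}$ and $\lambda_{k_2+1}\leq\bar\lambda-s_{\ell+1}$; because $s_{\ell+1}/s_\ell = e^{L_2 - L_4 + C}$ is large, there is a wide spectral gap just above the window $(\bar\lambda-s_\ell,\bar\lambda+s_\ell)$. On $\mathcal E_{\mathrm{uc}}$, since each cluster eigenvalue lies within $s_\ell\leq e^{-L_5}$ of $\bar\lambda$, each of $\psi_{k_1},\psi_{k_2}$ is at least $e^{-L_2}\|\psi\|_{\ell^\infty(Q)}$ on at least $L_4^{3/2}$ sites of $\Lambda$, and hypothesis (8) forces $\|\psi\|_{\ell^\infty(Q)}\geq cL_0^{-\delta/2}$, so those ``spread sites'' $x$ satisfy $\psi(x)^2 \geq r_3$ for a fixed, scale-dependent, very small $r_3$. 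If $x$ is such a site for $\psi_{k_1}$ and the contamination $\sum_{\lambda_m\in(r_2,r_5)}\psi_m(x)^2$ is also small, then \lref{minmax}---applied (after rescaling and shifting so that the relevant energies are small and positive) with $r_1$ matched to the upper edge $\bar\lambda+s_\ell$ of the window, $r_2$ placed inside the gap, $r_3$ the component bound, $r_4$ the contamination bound, and $r_5$ in the bulk---shows that flipping $V_x$ from $0$ to $1$ increases the number of eigenvalues of $H_Q$ at or above $\bar\lambda+s_\ell$; as that number was $k_1-1$, it becomes at least $k_1$, i.e.\ $\lambda_{k_1}(V+e_x\otimes e_x)\geq\bar\lambda+s_\ell$, violating $\mathcal E_{k_1,k_2,\ell}$, and since adding sites only raises eigenvalues this remains violated for every superset of $A\cup\{x\}$. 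Running the same argument with all inequalities reversed (equivalently, for $10\cdot\id_Q - H_Q$), but still using $\psi_{k_1}$ as the large eigenfunction at $x$, a down-flip $V_x:1\to0$ at such an $x$ pushes $\lambda_{k_2}$ below $\bar\lambda-s_\ell$ and keeps it there under further down-flips.

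The contamination is controlled via hypothesis (8) and \lref{kl}. Take $r_5 = e^{-L_5}$, so the eigenfunctions $\psi_m$ with $\lambda_m\in(r_2,r_5)$ are within $e^{-L_5}$ of $\bar\lambda$, hence concentrated on $G$ with $\|\psi_m\|_{\ell^2(Q\setminus G)}^2\leq CL_0^{-\delta}$; the restrictions $\psi_m|_G$ are then nearly orthonormal, so \lref{kl} bounds their number by $CL_0^\delta$. Summing, $\sum_{x\in\Lambda}\sum_{\lambda_m\in(r_2,r_5)}\psi_m(x)^2 \leq C$, so at most $Cr_4^{-1}$ sites of $\Lambda$ are contaminated at level $r_4$; choosing $r_4$ a large enough multiple of $L_4^{-3/2}$ leaves at least $\tfrac12 L_4^{3/2}$ uncontaminated spread sites of each $\psi_{k_i}$. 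Now put $\mathcal A^{\mathrm{up}} = \{A\in\mathcal A : \text{at least half of } \psi_{k_1}\text{'s spread set lies in }\Lambda\setminus A\}$ and $\mathcal A^{\mathrm{dn}} = \mathcal A\setminus\mathcal A^{\mathrm{up}}$; for $A\in\mathcal A^{\mathrm{up}}$ let $B(A)$ be the uncontaminated spread sites of $\psi_{k_1}$ outside $A$ (so $|B(A)|\geq cL_4^{3/2}$, and the up-flip argument makes $\mathcal A^{\mathrm{up}}$ genuinely $\rho$-Sperner), while for $A\in\mathcal A^{\mathrm{dn}}$ use the uncontaminated spread sites of $\psi_{k_1}$ inside $A$ together with the down-flip argument, so that the family of complements $\{\Lambda\setminus A : A\in\mathcal A^{\mathrm{dn}}\}$ is $\rho$-Sperner. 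Feeding $\rho = cL_4^{3/2}/|\Lambda|$ into \tref{sperner} and adding the two contributions gives the claim.

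The main obstacle is the calibration of the five thresholds in \lref{minmax}. The component bound $r_3$ is essentially $e^{-2L_2}$---super-exponentially small---whereas $r_4$ can only be made polynomially small, so the inequality $r_1\leq c\min\{r_3 r_5,\ r_2 r_3/r_4\}$ required by \lref{minmax} is tight; meeting it forces $s_\ell\leq r_1$ to be far smaller than $e^{-2L_2}$ and relies on the gap $s_{\ell+1}\approx r_2$ being far larger than the window $s_\ell\approx r_1$. Verifying that a single choice of the scale $L_1$ (equivalently, of the base level $s_0 = e^{-L_1}$) makes all these inequalities hold uniformly for $0\leq\ell\leq CL_0^\delta$, and that the spread sets indeed supply enough favorable free sites given only the sparse-support lower bound $L_4^{3/2}$, is the delicate part; this is exactly what the scale relations $L_k^{1-2\delta}\geq L_{k+1}\geq L_k^{1-\ep}$ and the hierarchy in the remark are arranged to guarantee.
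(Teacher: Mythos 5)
Your overall strategy is the same as the paper's: apply \lref{minmax} to conclude that flipping the potential at a high site of $\psi_{k_1}$ ejects an eigenvalue from the window $(\bar\lambda-s_\ell,\bar\lambda+s_\ell)$ permanently (by monotonicity under further flips of the same sign), split the configurations into those where the spread set sits mostly on $V=0$ and those where it sits mostly on $V=1$, and feed the resulting $B(A)$'s into \tref{sperner} with $\rho\geq cL_4^{3/2}L_0^{-2}$. That part is right, including the care you take with complementing the family in the down-flip case.

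The gap is in your control of the contamination radius $r_4$, and it is fatal to the numerology. You bound the contamination via the right-hand inequality of hypothesis (8) (concentration on $G$), \lref{kl}, and a pigeonhole over sites, obtaining $\sum_{x\in\Lambda}\sum_{\lambda_m\in(r_2,r_5)}\psi_m(x)^2\leq C$ and hence a set of uncontaminated sites provided $r_4\geq CL_4^{-3/2}$. But \lref{minmax} requires $r_1\leq cr_2r_3/r_4$, and with $r_1=2s_\ell$, $r_2=s_{\ell+1}$, $s_{\ell+1}/s_\ell=e^{L_2-L_4+C}$, and $r_3\approx e^{-2L_2}$, this rearranges to $r_4\leq c\,e^{-L_4+C}$ up to polynomial factors. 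A polynomially small $r_4\approx L_4^{-3/2}$ misses this by a factor on the order of $e^{L_4}$; with your numbers the constraint reduces to something like $e^{L_4}\leq cL_4^{3/2}$, which fails. You even flag in your last paragraph that ``$r_4$ can only be made polynomially small'' and that the constraint ``is tight,'' but the scale hierarchy cannot rescue an inequality that is off by $e^{L_4}$.

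The missing ingredient is the \emph{other} half of hypothesis (8), which you never invoke: $e^{L_4}\|\psi\|_{\ell^2(Q\setminus\cup_k Q_k')}\leq\|\psi\|_{\ell^2(Q)}=1$. Since $\cup_k Q_k'\subseteq F$ and hence $\Lambda=Q\setminus F\subseteq Q\setminus\cup_k Q_k'$, this gives the pointwise bound $\psi_m(x)^2\leq e^{-2L_4}$ for every $x\in\Lambda$ and every eigenfunction $\psi_m$ with $|\lambda_m-\bar\lambda|<e^{-L_5}$. Summing over the at most $L_0^2$ such indices $m$ gives $\sum_m\psi_m(x)^2\leq L_0^2e^{-2L_4}<e^{-L_4}$ for \emph{every} $x\in\Lambda$ (no pigeonhole, no uncontaminated-site selection, and in fact no use of \lref{kl} at this stage; the paper deploys \lref{kl} only in the later claim bounding the size of $K$). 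With $r_4=e^{-cL_4}$ the constraint $r_1\leq cr_2r_3/r_4$ is then satisfied because $s_{\ell+1}/s_\ell=e^{L_2-L_4+C}$ was engineered precisely so that the factor $e^{-L_4}$ in $r_4$ cancels the $L_4$ in that exponent. In short: the argument closes only when $r_4$ is exponentially small in $L_4$, and that smallness comes from the localization inequality $e^{L_4}\|\psi\|_{\ell^2(Q\setminus\cup_k Q_k')}\leq 1$, not from $G$-concentration plus counting.
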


Since we are conditioning on $V_F = v$, we can view the events $\mathcal E_{\mathrm{uc}}$ and $\mathcal E_{k_1, k_2, \ell}$ as subsets of $\{ 0, 1 \}^{Q \setminus F}$.  For $1 \leq k_1 \leq k_2 \leq L^2$ and $i = 0, 1$, let $\mathcal E_{k_1,k_2,\ell,i}$ denote the event that
\begin{equation*}
\mathcal E_{k_1,k_2,\ell} \quad \mbox{and} \quad |\{ |\psi_{k_1}| \geq e^{-L_2} \} \cap \{ V_Q = i \} \setminus F| \geq \tfrac12 L_4^{3/2}.
\end{equation*}
Observe that
\begin{equation*}
\mathcal E_{k_1,k_2,\ell} \cap \mathcal E_{\mathrm{uc}} \subseteq \mathcal E_{k_1,k_2,\ell,0} \cup \mathcal E_{k_1,k_2,\ell,1}.
\end{equation*}
Next, observe that if $w \in \mathcal E_{k_1,k_2,\ell,i}$, $x \in Q \setminus F$, $w(x) = i$, and $|\psi_{k_1}(x)| \geq e^{-L_2}$, then $w^x \notin \mathcal E_{k_1,k_2,\ell,i}$, where
\begin{equation*}
w^x(y) = \begin{cases} w(y) & \mbox{if } y \neq x \\ 1-w(y) & \mbox{if } y = x. \end{cases}
\end{equation*}
In the case $i = 0$, this follows by applying \lref{minmax} centered at $H_Q - \bar \lambda + s_\ell$ at site $x$ and with radii $r_1 = 2 s_\ell$, $r_2 = s_{\ell+1}$, $r_3 = e^{-L_2}$, $r_4 = e^{-c L_4}$, and $r_5 = e^{-L_5}$.  To guarantee that the hypotheses of \lref{minmax} hold, we use hypotheses (8) to observe that $\sum_{|\lambda_i - \bar \lambda| < e^{-L_5}} \psi_i(x)^2 < e^{-c L_4}$.  Lemma 5.1 implies that $\lambda_{k_1}$ moves out of the interval $(\bar \lambda - s_\ell, \bar \lambda + s_\ell)$ when $w(x)$ is changed from $0$ to $1$.  The case $i = 1$ is symmetric.

By definition of $\mathcal E_{k_1,k_2,\ell,i}$, if $w \in \mathcal E_{k_1,k_2,\ell,i}$, then the set $B_i(w)$ of $x \in Q \setminus F$ for which $w(x) = i$ and $|\psi_{k_1}(x)| \geq e^{-L_2}$ has size at least $\tfrac12 L_4^{3/2}$.  Since $Q \setminus F$ has size at most $L_0^2$, we see that $\mathcal E_{k_1,k_2,\ell,i}$ is $\tfrac12 L_0^{-2} L_4^{3/2}$-Sperner via the $B_i$.  Applying \tref{sperner}, we obtain $\P[\mathcal E_{k_1,k_2,\ell,i} | V_F = v] \leq C L_0 L_4^{-3/2}.$  This gives the claim.

\begin{claim}
There exists a set $K \subseteq \{ 1, ..., L_0^2 \}$, depending only on $F$ and $v$, such that $|K| \leq C L_0^{\delta}$ and
\begin{equation*}
\{ \| R_Q \| > e^{L_1} \} \cap \{ V_F = v \} \subseteq \bigcup_{\substack{k_1, k_2 \in K \\ 0 \leq \ell \leq C L_0^{\delta}}} \mathcal E_{k_1,k_2,\ell}.
\end{equation*}
\end{claim}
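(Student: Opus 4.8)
The plan is to build $K$ from the Hamiltonian with the free potential switched off, and to match the bad event with a single triple $(k_1,k_2,\ell)$ per configuration via a stopping-time (``staircase'') argument; the one genuinely new point is that hypothesis (8) makes the eigenvalues near $\bar\lambda$ essentially insensitive to the free potential.

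First I would extract two facts from hypothesis (8). Since we may assume $\cup_kQ'_k\subseteq F$, we have $Q\setminus F\subseteq Q\setminus\cup_kQ'_k$, so for any $V:Q\to[0,1]$ with $V_F=v$, any $\lambda$ with $|\lambda-\bar\lambda|\le e^{-L_5}$, and any $\psi$ with $H_Q(V)\psi=\lambda\psi$ and $\|\psi\|_{\ell^2(Q)}=1$, hypothesis (8) gives both $\|\psi\|_{\ell^2(Q\setminus F)}\le e^{-L_4}$ and $\|\psi\|_{\ell^2(Q\setminus G)}^2\le 1-(1+L_0^{-\delta})^{-2}\le 2L_0^{-\delta}$. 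From the second bound, restricting an orthonormal family of such eigenfunctions to $G$ produces vectors $w_i\in\R^G$ with $|w_i\cdot w_j-\delta_{ij}|\le 2L_0^{-\delta}\le(5|G|)^{-1/2}$ for $L_0$ large (since $|G|\le L_0^\delta$), so \lref{kl} bounds the family by $\tfrac12(5-\sqrt5)|G|$; hence $\trace\id_{(\bar\lambda-e^{-L_5},\bar\lambda+e^{-L_5})}(H_Q(V))\le CL_0^\delta=:m$ for \emph{every} such $V$.

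Next I would pin the indices. Let $A=H_Q(v^0)$, the Hamiltonian with the free potential equal to $0$; it depends only on $F$ and $v$. Given $V$ with $V_F=v$, write $H_Q(V)=A+D'$ with $0\le D'\le\id_{Q\setminus F}$ and consider the $1$-Lipschitz non-decreasing path $t\mapsto\lambda_k(A+tD')$, $t\in[0,1]$; note $A+tD'=H_Q(v^0+tD')$ with $v^0+tD'\in[0,1]^Q$ and $(v^0+tD')_F=v$. Whenever $\lambda_k(A+tD')$ lies in $(\bar\lambda-e^{-L_5},\bar\lambda+e^{-L_5})$, the first bound above gives a Hellmann--Feynman derivative $\tfrac{d}{dt}\lambda_k(A+tD')=\langle\psi,D'\psi\rangle\le\|\id_{Q\setminus F}\psi\|^2\le e^{-2L_4}$. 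Since $e^{-2L_4}$ is far smaller than $e^{-L_5}$, an eigenvalue within $e^{-L_5-2}$ of $\bar\lambda$ at $t=1$ cannot have left the window $(\bar\lambda-e^{-L_5},\bar\lambda+e^{-L_5})$ at any earlier time along the path, so transporting the estimate back to $t=0$ yields $|\lambda_k(H_Q(V))-\bar\lambda|\le e^{-L_5-2}\Rightarrow|\lambda_k(A)-\bar\lambda|<e^{-L_5}$. Therefore $K:=\{k:|\lambda_k(A)-\bar\lambda|<e^{-L_5}\}$ contains every index that can be relevant, and applying the trace bound of the previous paragraph to $v^0$ gives $|K|=\trace\id_{(\bar\lambda-e^{-L_5},\bar\lambda+e^{-L_5})}(A)\le m\le CL_0^\delta$.

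Finally I would run the staircase. For $V$ with $V_F=v$ and $\|R_Q\|>e^{L_1}$, the block $I_\ell=\{k:|\lambda_k(H_Q(V))-\bar\lambda|<s_\ell\}$ is a nonempty set of consecutive integers $\{j_\ell^-,\dots,j_\ell^+\}$ for each $\ell\ge0$ (recall $s_0=e^{-L_1}$). If $\lambda_{j_\ell^--1}$ and $\lambda_{j_\ell^++1}$ both lie outside $(\bar\lambda-s_{\ell+1},\bar\lambda+s_{\ell+1})$ — the condition being vacuous at $j_\ell^-=1$ or $j_\ell^+=L_0^2$ — then $\mathcal E_{j_\ell^-,j_\ell^+,\ell}$ holds and I stop; otherwise $I_{\ell+1}\supsetneq I_\ell$, so $|I_\ell|\ge\ell+1$. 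The scale relations in (3) give, for $L_0$ large, $(L_2-L_4+C)m\le L_1-L_5-2$, hence $s_\ell\le e^{-L_5-2}$ and $|I_\ell|\le\trace\id_{(\bar\lambda-s_\ell,\bar\lambda+s_\ell)}(H_Q(V))\le m$ for all $\ell\le m$; thus the recursion must stop at some $\ell^*<m\le CL_0^\delta$, and at stopping $|\lambda_{j_{\ell^*}^\pm}(H_Q(V))-\bar\lambda|<s_{\ell^*}\le e^{-L_5-2}$, so $j_{\ell^*}^-,j_{\ell^*}^+\in K$ by the implication above. This places $V$ in $\mathcal E_{k_1,k_2,\ell}$ with $k_1=j_{\ell^*}^-$, $k_2=j_{\ell^*}^+\in K$ and $0\le\ell=\ell^*\le CL_0^\delta$, giving the claimed containment. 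The main obstacle is the transport estimate in the third paragraph: it genuinely exploits the \emph{exponential} decay of eigenfunctions near $\bar\lambda$ off $\cup_kQ'_k$ supplied by hypothesis (8), and a little bookkeeping is needed to align the windows $e^{-L_1}$, $e^{-L_5-2}$, $e^{-L_5}$, $e^{-2L_4}$; everything else is a routine min--max and counting argument.
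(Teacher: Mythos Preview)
Your proof is correct and follows essentially the same approach as the paper: both use hypothesis (8) to transport eigenvalues along a linear path in the potential (controlling the Hellmann--Feynman derivative by $e^{-2L_4}$ inside the window), both invoke \lref{kl} on the restrictions to $G$ to bound the number of relevant indices by $CL_0^\delta$, and both finish with the pigeonhole/staircase argument that locates an empty annulus among the $s_\ell$. The only cosmetic difference is that you define $K$ as the eigenvalue indices near $\bar\lambda$ for the single reference potential $v^0$ (free sites set to $0$), whereas the paper takes $K$ to be the set of indices that are ever within $e^{-L_2}$ of $\bar\lambda$ for some $w\in\{0,1\}^{Q\setminus F}$; your choice is slightly more direct and avoids quantifying over all configurations in the definition.
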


Since we are conditioning on $V_F = v$, we can view $\lambda_k$ and $\psi_k$ as functions on $\{ 0, 1 \}^{Q \setminus F}$.  Let $1 \leq k_1 < \cdots < k_m \leq L_0^2$ list all indices $k_i$ for which there is at least one $w \in \{ 0, 1 \}^{Q \setminus F}$ such that $|\lambda_k(w) - \bar \lambda| \leq e^{-L_2}$.  To prove the claim, it suffices to prove that $m \leq C L_0^{\delta}$.  Indeed, we can always find an $0 \leq \ell \leq m$ such that the annulus $[\bar \lambda -s_{\ell+1}, \bar \lambda + s_{\ell+1}] \setminus [\bar \lambda - s_\ell, \bar \lambda + s_\ell]$ contains no eigenvalue of $H_Q$.

Since $\cup_k Q_k' \subseteq F$, the left-hand side of hypothesis (8) says that $w \in \{ 0, 1 \}^{Q \setminus F}$ and $|\lambda_{k_i}(w) - \bar \lambda| \leq e^{-L_5}$ imply $\| \psi_{k_i}(w) \|_{\ell^\infty(Q \setminus F)} \leq e^{-L_4}$.  In particular, if there is a $w_0 \in \{ 0, 1 \}^{Q \setminus F}$ such that $|\lambda_{k_i}(w_0) - \bar \lambda| \leq e^{-L_2}$, then, by eigenvalue variation,
\begin{equation*}
|\lambda_{k_i}(w) - \bar \lambda| \leq e^{-L_4}
\end{equation*}
holds for all $w \in \{ 0, 1 \}^{Q \setminus F}$.  Indeed, for $w_t = w_0 + t (w - w_0)$ and $t \in [0,1]$, we compute
\begin{equation*}
\begin{aligned}
|\lambda_{k_i}(w_t) - \bar \lambda| & \leq |\lambda_{k_i}(w_0) - \bar \lambda| + \int_0^t \|\psi_{k_i}(w_s) \|_{\ell^2(Q \setminus F)}^2 \,ds \\
& \leq e^{-L_2} + \int_0^t |Q| e^{-2 L_4} + \id_{|\lambda_{k_i}(w_s)-\bar \lambda| \geq e^{-L_5}}\,ds \\
& \leq e^{-L_4} + \id_{\max_{0 \leq s \leq t} |\lambda_{k_i}(w_s) - \bar \lambda| \geq e^{-L_5}}
\end{aligned}
\end{equation*}
and conclude by continuity.

The right-hand side of hypothesis (8) now also tells us that, for all $w \in \{ 0, 1 \}^{Q \setminus F}$, we have $1 = \| \psi_{k_i}(w) \|_{\ell^2(Q)} \geq \| \psi_{k_i}(w) \|_{\ell^2(G)} \geq 1 - C L_0^{-\delta}$.  In particular, we have $|\langle \psi_{k_i}(w), \psi_{k_j}(w) \rangle_{\ell^2(G)} - \delta_{ij}| \leq C  L_0^{-\delta} \leq (5|G|)^{-1/2}$.  \lref{kl} now tells us that $m \leq C |G| \leq C L_0^{\delta}$.

We compute
\begin{equation*}
\begin{aligned}
& \P[\|R_Q\| > e^{L_1} | V_F = v] \\
& \leq \P[\mathcal E_{\mathrm{uc}}^c | V_F = v] + \sum_{\substack{k_1,k_2 \in K \\ 1 \leq \ell \leq C L_0^{\delta}}} \P[\mathcal E_{k_1, k_2,\ell} \mbox{ and } \mathcal E_{\mathrm{uc}} | V_F = v] \\
& \leq e^{-L_0^\ep} + C L_0^{1+3 \delta} L_4^{-3/2} \\
& \leq L_0^{10 \ep - 1/2}
\end{aligned}
\end{equation*}
using the above claims.
\end{proof}

\section{The Geometric Resolvent Identity}

We prove a discrete analogue of \cite{Bourgain-Kenig}*{Section 2}, which encapsulates the deterministic part of the multiscale analysis.  We need the following consequence of the geometric resolvent identity.

\begin{lemma}
If $x \in Q' \subseteq Q$ and $y \in Q$, then there are $u \in Q'$ and $v \in Q \setminus Q'$ such that $|u - v| = 1$ and $|R_Q(x,y)| \leq |R_{Q'}(x,y)| + |Q| |R_{Q'}(x,u)| |R_Q(v,y)|$.
\end{lemma}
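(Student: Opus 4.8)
The plan is to use the geometric resolvent identity, being careful about the coupling across the boundary of $Q'$. We may assume that both $R_Q$ and $R_{Q'}$ exist and that $Q' \subsetneq Q$, since otherwise there is nothing to prove. Fix $y \in Q$ and set $\phi := R_Q e_y$; then $\phi$ is supported on $Q$, it satisfies $(H_Q - \bar\lambda)\phi = e_y$, and $\phi(z) = R_Q(z,y)$ for all $z$.

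First I would isolate the difference between $H_Q$ and $H_{Q'}$ on the sub-square. Writing $H_Q = \id_Q H \id_Q$ and $H_{Q'} = \id_{Q'} H \id_{Q'}$ and using $\id_Q u - \id_{Q'} u = \id_{Q \setminus Q'} u$, one checks that the diagonal and potential parts of $H$ contribute nothing at a site of $Q'$ (there $\id_{Q\setminus Q'}u$ vanishes), so that only the hopping terms survive: for every $u$ and every $z \in Q'$,
\[
(H_Q u)(z) - (H_{Q'} u)(z) = - \sum_{\substack{w \in Q \setminus Q' \\ |w - z| = 1}} u(w).
\]
Applying this with $u = \phi$ and using $(H_Q - \bar\lambda)\phi = e_y$ gives the identity $(H_{Q'} - \bar\lambda)(\id_{Q'}\phi) = \id_{Q'} e_y + g$, where $g(z) := \sum_{w \in Q\setminus Q',\, |w-z|=1} \phi(w)$ is supported on the inner boundary of $Q'$. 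Applying $R_{Q'}$ and evaluating at $x \in Q'$ then yields
\[
R_Q(x,y) = R_{Q'}(x,y) + \sum_{z \in Q'}\ \sum_{\substack{w \in Q \setminus Q' \\ |w - z| = 1}} R_{Q'}(x,z)\, R_Q(w,y),
\]
where by convention $R_{Q'}(x,y) = 0$ when $y \notin Q'$, so that the first term on the right is $(R_{Q'}\id_{Q'} e_y)(x)$ in both cases; this disposes of the cases $y \in Q'$ and $y \notin Q'$ uniformly.

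To finish, I would take absolute values, bound each summand by the largest one, and let $(u,v)$ with $u \in Q'$, $v \in Q \setminus Q'$, $|u-v| = 1$ be a pair maximizing $|R_{Q'}(x,z)|\,|R_Q(w,y)|$; such a pair exists because $Q$ is connected and $\emptyset \neq Q' \subsetneq Q$. The index set of the double sum is precisely the set of directed lattice edges from $Q'$ into $Q \setminus Q'$, so its cardinality is at most the edge boundary of $Q'$ inside $Q$, which is crudely at most $|Q|$. This gives $|R_Q(x,y)| \leq |R_{Q'}(x,y)| + |Q|\,|R_{Q'}(x,u)|\,|R_Q(v,y)|$. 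The whole argument is routine; the only place that needs a little care is the boundary bookkeeping in the displayed difference formula (verifying that $H_Q$ and $H_{Q'}$ differ, on $Q'$, exactly by the hoppings along edges crossing $\partial Q'$).
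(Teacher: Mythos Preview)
Your proof is correct and follows essentially the same approach as the paper. The paper derives the identity at the operator level---writing $(H-\bar\lambda)_Q = (H-\bar\lambda)_{Q'} + (H-\bar\lambda)_{Q\setminus Q'} - \id_{Q'}\Delta\id_{Q\setminus Q'} - \id_{Q\setminus Q'}\Delta\id_{Q'}$, then multiplying by $R_{Q'}$ on the left and $R_Q$ on the right---whereas you derive the same identity by applying the operators to the column $\phi = R_Q e_y$ and tracking the boundary hoppings pointwise; both routes land on the identical sum $R_Q(x,y) = R_{Q'}(x,y) + \sum_{u\in Q',\,v\in Q\setminus Q',\,|u-v|=1} R_{Q'}(x,u)R_Q(v,y)$ and finish by choosing the maximizing pair and bounding the number of boundary edges by $|Q|$.
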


\begin{proof}
Write
\begin{equation*}
(H - \bar \lambda)_Q = (H - \bar \lambda)_{Q'} + (H - \bar \lambda)_{Q \setminus Q'} - \id_{Q'} \Delta \id_{Q \setminus Q'} - \id_{Q \setminus Q'} \Delta \id_{Q'}.
\end{equation*}
Multiply on the left by $R_{Q'}$ and on the right by $R_{Q}$ to obtain
\begin{equation*}
R_{Q'} = \id_{Q'} R_Q - R_{Q'} \Delta \id_{Q \setminus Q'} R_{Q}.
\end{equation*}
Expanding the last term in coordinates gives
\begin{equation*}
R_Q(x,y) = R_{Q'}(x,y) + \sum_{\substack{u \in Q' \\ v \in Q \setminus Q' \\ |u - v| = 1}} R_{Q'}(x,u) R_Q(v,y).
\end{equation*}
Since the sum has at most $|Q|$ terms, the lemma follows.
\end{proof}

We show propagation of exponential decay from small scales to large scales, even in the presence of finitely many defects.

\begin{lemma}
\label{l.multiscale}
If
\begin{enumerate}
\item $\ep > \delta > 0$ are small
\item $K \geq 1$ an integer
\item $L_0 \geq \cdots \geq L_6 \geq C_{\ep,\delta,K}$ dyadic scales with $L_k^{1-\ep} \geq L_{k+1}$
\item $1 \geq m \geq 2 L_5^{-\delta}$ represents the exponential decay rate
\item $Q \subseteq \Z^2$ a square with $\ell(Q) = L_0$
\item $Q'_1, ..., Q'_K \subseteq Q$ disjoint $L_2$-squares with $|R_{Q'_k}| \leq e^{L_4}$
\item for all $x \in Q$ one of the following holds
\begin{enumerate}
\item there is a $Q'_k$ such that $x \in Q'_k$ and $\dist(x,Q \setminus Q'_k) \geq \tfrac18 \ell(Q'_k)$
\item there is an $L_5$-square $Q'' \subseteq Q$ such that $x \in Q''$, $\dist(x,Q \setminus Q'') \geq \tfrac18 \ell(Q'')$, and $|R_{Q''}(y,z)| \leq e^{L_6 - m |y-z|}$ for $y, z \in Q''$.
\end{enumerate}
\end{enumerate}
then $|R_Q(x,y)| \leq e^{L_1 - \tilde m |x-y|}$ for $x, y \in Q$ where $\tilde m = m - L_5^{-\delta}$.
\end{lemma}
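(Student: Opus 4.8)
The plan is to iterate the geometric resolvent identity from the previous lemma, starting from $x$, so as to expand $R_Q(x,y)$ as a sum over ``walks'' through the sub-squares supplied by hypothesis~(7), and then to bound each walk's contribution.

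First I would fix $x,y\in Q$ (if $R_Q$ does not exist there is nothing to prove) and, for every $z\in Q$, fix once and for all a sub-square $\Lambda(z)\subseteq Q$ with $z\in\Lambda(z)$ and $\dist(z,Q\setminus\Lambda(z))\ge\tfrac18\ell(\Lambda(z))$ as in hypothesis~(7); call $z$ a \emph{defect point} if we are forced to take $\Lambda(z)=Q'_k$ for some $k$ (no small good box is available there), and otherwise take $\Lambda(z)$ to be a good $L_5$-square. Iterating $|R_Q(z,y)|\le|R_{\Lambda(z)}(z,y)|+\sum_{(u,v)}|R_{\Lambda(z)}(z,u)|\,|R_Q(v,y)|$, the sum running over the $\le 4\ell(\Lambda(z))$ boundary edges of $\Lambda(z)$, produces
$$|R_Q(x,y)|\ \le\ \sum_{\text{walks}}\ \Big(\prod_{j<n}|R_{\Lambda_j}(z_j,u_j)|\Big)\,|R_{\Lambda_n}(z_n,y)|,$$
where a walk is a sequence $x=z_0,\,z_1=v_0,\dots,\,z_n=v_{n-1}$ with $\Lambda_j=\Lambda(z_j)$, $(u_j,v_j)$ a boundary edge of $\Lambda_j$, and $y\in\Lambda_n$ (so the last factor can be nonzero); walks of length $N$ that never reach a box containing $y$ contribute a remainder that tends to $0$ as $N\to\infty$, since by the next step the factors along good steps are summably small while $\|R_Q\|<\infty$.

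The heart of the estimate is the bound on a single walk. For a \emph{good} step $j$ I would use that $u_j$ is adjacent to $Q\setminus\Lambda_j$ and $z_j$ is deep inside $\Lambda_j$, so $|z_j-u_j|\ge\tfrac18L_5-1$, together with the triangle inequality $|z_j-u_j|\ge|z_j-z_{j+1}|-1$ (recall $|u_j-v_j|=1$); writing $m=\tilde m+L_5^{-\delta}$, these give $|R_{\Lambda_j}(z_j,u_j)|\le e^{L_6-m|z_j-u_j|}\le\theta\,e^{-\tilde m|z_j-z_{j+1}|}$ with $\theta\le e^{-\frac1{16}L_5^{1-\delta}}$, the key point being that $L_6\le L_5^{1-\ep}\ll L_5^{1-\delta}\le mL_5$ because $\ep>\delta$ and $m\ge 2L_5^{-\delta}$. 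The factor $\theta$ is small enough to absorb the $\le 4L_5$ choices of boundary edge at a good step, so multiplying over a walk with only good steps telescopes (via $\sum_{j<n}|z_j-z_{j+1}|+|z_n-y|\ge|x-y|$) to at most $2e^{L_6}e^{-\tilde m|x-y|}$, and the geometric series over such walks converges.

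The main obstacle is the \emph{defect} steps, where one can only bound $|R_{\Lambda_j}(z_j,u_j)|\le\|R_{Q'_k}\|\le e^{L_4}$, a factor much larger than $1$. Here I would exploit the separation of scales $L_5\ll L_2$ together with the disjointness of the $Q'_k$. By disjointness, a point just outside a defect square cannot lie deep inside another one, so each defect step is followed by a good step; and to get deep into a defect square $Q'_k$ again the walk must first cross its ``shell'' of width $\ge\tfrac18L_2$ using good steps of length $\le\sqrt2L_5+1$, hence must make $\gtrsim L_2/L_5$ good steps in a stretch disjoint from the analogous stretches for other returns. These forced good steps contribute decay $\gtrsim mL_2$, which dominates the price $e^{L_4}$ of the defect step because $\ep>\delta$ forces $mL_2\gg L_4$ (indeed $L_2/L_4\ge L_4^{(2\ep-\ep^2)/(1-\ep)^2}$ and $(2\ep-\ep^2)/(1-\ep)^2>\delta$). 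Consequently the walk can enter the defect squares only $C_K$ times before accumulating enough decay to render its contribution negligible, so the total loss from defect steps — both the $e^{L_4}$ prices and the displacement $\le C_KL_2$ of the defect steps, which must be charged against the $\tilde m|x-y|$ budget — is at most $e^{C_KL_2}$, and $C_KL_2\ll L_1$ since $L_1\ge L_2^{1/(1-\ep)}$ and all scales exceed $C_{\ep,\delta,K}$. Assembling, by summing the convergent series over walks and folding in the defect loss and the per-step entropy (all $\le e^{L_1}$), one obtains $|R_Q(x,y)|\le e^{L_1-\tilde m|x-y|}$. I expect the control of how often the walk can revisit the finitely many defect squares to be the delicate point; the remainder is the standard geometric-resolvent iteration.
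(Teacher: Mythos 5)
Your overall strategy --- iterate the geometric resolvent identity and control the resulting sum over walks --- is a legitimate alternative to the paper's fixed-point argument, and your treatment of the purely-good walks (absorbing the $4L_5$ entropy into $\theta \le e^{-cL_5^{1-\delta}}$ using $L_6 \ll L_5^{1-\delta}$, then telescoping via the triangle inequality) is correct. The gap is in the accounting for the defect steps.

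A defect step takes the walk from a point $z_j$ with $\dist(z_j, Q\setminus Q_k')\geq \tfrac18 L_2$ to a point $z_{j+1}$ adjacent to $Q_k'$, so $|z_j-z_{j+1}|$ can be as large as $\sqrt 2 L_2$, and there is no $e^{-m|z_j-z_{j+1}|}$ factor to offset it. In your scheme this displacement must be ``charged against the $\tilde m|x-y|$ budget,'' costing up to $e^{\tilde m\sqrt2 L_2}$ per defect step. You propose to pay for it with the shell crossing, but the shell crossing only has $\sum_{\text{shell}}|z_j-u_j|\gtrsim L_2/16$, so at rate $m$ it supplies a factor of at most $e^{-mL_2/16}$. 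The inequality $mL_2/16 \geq \tilde m\sqrt2 L_2 + L_4$ forces $m\lesssim (1-1/(8\sqrt2))^{-1}L_5^{-\delta}\approx 1.1\,L_5^{-\delta}$, which contradicts the hypothesis $m\geq 2L_5^{-\delta}$. If instead one tries to pay only from the surplus rate $m-\tilde m = L_5^{-\delta}$ (reserving $\tilde m$ for the $e^{-\tilde m|x-y|}$ budget), the shell crossing supplies only $e^{-cL_2L_5^{-\delta}}$, which is even smaller than $e^{\tilde m\sqrt2 L_2}$. So the per-visit cost does not net out, the contribution from walks with $j$ visits does not decay in $j$, and the claim that ``the walk can enter the defect squares only $C_K$ times before accumulating enough decay'' is not justified --- walks can revisit a defect square arbitrarily often and those contributions do not obviously sum to something $\le e^{L_1-\tilde m|x-y|}$.

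The paper circumvents exactly this problem by working not with Euclidean distance but with a directed weighted graph distance $d(\cdot,\cdot)$: every pair $x\to y$ has a forward edge of weight $|x-y|$, and in addition each defect step $z_j\to z_{j+1}$ (deep point to an adjacent exterior point) has a \emph{shortcut} edge of weight $-L_3$, with $L_3\ll L_2$ chosen so that $\tilde m L_3 \gg L_4$. This turns the defect step's bookkeeping charge from $+\tilde m\sqrt2L_2$ into $-\tilde m L_3$: the per-defect-step factor becomes $L_0^2 e^{L_4-\tilde m L_3}\leq \tfrac12$, and the entire Euclidean/graph-distance discrepancy is collected once, via the bound $d(x,y)\geq |x-y|-C_KL_2$, at a total cost $e^{C_KL_2}\leq e^{L_1}$. (The acyclicity of the weighted graph, hence the existence of $d$, is where $L_2\gg L_3$ is used: re-entering a defect square costs $+\tfrac18 L_2$ of positive weight but saves only $-L_3$.) Your proof needs this device, or an equivalent reweighting of the defect-step displacement; without it the triangle-inequality accounting is lossy in precisely the wrong direction and the sum over walks cannot be closed.
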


\begin{remark}
The scales in the above lemma have the following interpretations:
\begin{center}
\begin{tabular}{rl}
$L_0$ & large scale \\
$e^{L_1}$ & large scale resolvent bound \\
$L_2$ & defect scale \\
$-L_3$ & defect edge weight \\
$e^{L_4}$ & defect resolvent bound \\
$L_5$ & small scale \\
$e^{L_6}$ & small scale resolvent bound \\
$2L_5^{-\delta}$ & exponential decay bound \\
$L_5^{-\delta}$ & exponential decay loss \\
\end{tabular}
\end{center}
These are set up to be compatible with the multiscale analysis below.
\end{remark}

\begin{proof}
Throughout the proof, we let $C > 1 > c > 0$ depend on $\ep, \delta, K$.

Put a weighted directed multigraph structure on $Q$ as follows.  If $x, y \in Q$, then add the edge $x \to y$ with weight $|x-y|$.  If $x \in Q'_k$, $y \in Q \setminus Q'_k$, $\dist(x, Q \setminus Q'_k) \geq \ell(Q'_k)/8$, and $\dist(y,Q'_k) = 1$, then add the edge $x \to y$ with negative weight $- L_3$.  Since $\ell(Q'_k) = L_2 \gg L_3$ and the $Q_k'$ disjoint, this directed graph has no cycles with negative total weight.  In particular, there is a well defined directed weighted distance $d(x,y)$ that satisfies
\begin{equation*}
d(x,y) \leq d(x,z) + d(z,y)
\end{equation*}
and, using the finiteness of $K$, 
\begin{equation*}
|x-y| \geq d(x,y) \geq |x-y| - C L_2.
\end{equation*}

We estimate the quantity
\begin{equation*}
\alpha = \max_{x,y \in Q} e^{\tilde m d(x,y)} |R_Q(x,y)|
\end{equation*}
in terms of itself.

Suppose $x, y \in Q$ and that $x$ falls into case (a) in the hypotheses.  Using the geometric resolvent identity, we can find points $u \in Q'_k$ and $v \in  Q \setminus Q'_k$ such that $|u - v| = 1$ and 
\begin{equation*}
|R_Q(x,y)| \leq |R_{Q'_k}(x,y)| + L^2 |R_{Q'_k}(x,u)| |R_Q(v,y)|.
\end{equation*}
Using the definition of $\alpha$, $d(x,y) \leq -L_3 + d(v,y)$, and $\tilde m L_3 \geq c L_3^{1-\delta} \gg L_4$, estimate
\begin{equation*}
\begin{aligned}
|R_Q(x,y)|
& \leq e^{L_4} \id_{Q'_k}(y) + L^2 \cdot e^{L_4} \cdot \alpha e^{-\tilde m d(v,y)} \\
& \leq e^{L_4} \id_{Q'_k}(y) + L^2 \cdot e^{L_4} \cdot \alpha e^{-\tilde m (d(x,y) + L_3)} \\
& \leq e^{L_4} \id_{Q'_k}(y) + L^2 \cdot e^{L_4 - \tilde m L_3} \cdot \alpha e^{-\tilde m d(x,y)} \\
& \leq e^{L_4} \id_{Q'_k}(y) + \tfrac12 \alpha e^{-\tilde m d(x,y)}.
\end{aligned}
\end{equation*}
Since $\tilde m d(x,y) \leq C L_2$ when $y \in Q'_k$, we obtain
\begin{equation*}
e^{\tilde m d(x,y)} |R_Q(x,y)| \leq e^{C L_2} + \tfrac12 \alpha.
\end{equation*}

Suppose $x, y \in Q$ and that $x$ falls into case (b) in the hypotheses.  Using the geometric resolvent identity, we can find points $u \in Q''$ and $v \in Q \setminus Q''$ such that $|u - v| = 1$ and
\begin{equation*}
|R_Q(x,y)| \leq |R_{Q''}(x,y)| + L_0^2 |R_{Q''}(x,u)| |R_Q(v,y)| .
\end{equation*}
Using the definition of $\alpha$ and $(m-\tilde m) |x - u| \geq c L_5^{1-\delta} \gg L_6$, we estimate
\begin{equation*}
\begin{aligned}
|R_Q(x,y)|
& \leq e^{L_6} \id_{Q''}(y) + L^2 \cdot e^{L_6 - m |x-u|} \cdot \alpha e^{-\tilde m d(v,y)} \\
& \leq e^{L_6} \id_{Q''}(y) + L^2 \cdot e^{L_6 - (m - \tilde m)|x-u| + 1} \cdot \alpha e^{-\tilde m d(x,y)} \\
& \leq e^{L_6} \id_{Q''}(y) + \tfrac12 \alpha e^{-\tilde m d(x,y)}.
\end{aligned}
\end{equation*}
Since $\tilde m d(x,y) \leq C L_5$ if $y \in Q''$, we obtain
\begin{equation*}
e^{\tilde m d(x,y)} |R_Q(x,y)| \leq e^{C L_5} + \tfrac12 \alpha.
\end{equation*}

Combining the above estimates, we see that
\begin{equation*}
\alpha \leq e^{C L_2} + \tfrac12 \alpha.
\end{equation*}
In particular,
\begin{equation*}
|R_Q(x,y)| \leq e^{C L_2 - \tilde m d(x,y)} \leq e^{L_1 - \tilde m |x - y|}
\end{equation*}
for all $x, y \in Q$.
\end{proof}

We also need the continuity of exponential resolvent bounds.

\begin{lemma}
\label{l.continuity}
If $Q \subseteq \Z^2$ a square, $\lambda \in \R$, $\alpha > \beta > 0$, and
\begin{equation*}
|(H_Q - \lambda)^{-1}(x,y)| \leq e^{\alpha - \beta |x-y|} \quad \mbox{for } x, y \in Q,
\end{equation*}
then, for all $|\lambda' - \lambda| < c \beta |Q|^{-1} e^{-\alpha}$,
\begin{equation*}
|(H_Q - \lambda')^{-1}(x,y)| \leq 2 e^{\alpha - \beta |x-y|} \quad \mbox{for } x, y \in Q.
\end{equation*}
\end{lemma}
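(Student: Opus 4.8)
The plan is to expand $(H_Q - \lambda')^{-1}$ as a Neumann series around $R := (H_Q - \lambda)^{-1}$, controlling matrix elements with a weighted seminorm tailored to the decay rate $\beta$ rather than with the operator norm. Concretely, for operators $A$ supported on $Q$ (i.e. $A = \id_Q A \id_Q$) I would set $\|A\|_\beta := \max_{x,y\in Q} e^{\beta|x-y|}|A(x,y)|$, so that the hypothesis reads exactly $\|R\|_\beta \le e^{\alpha}$. The one elementary computation to record is that $\|\cdot\|_\beta$ is submultiplicative up to the volume factor: from $(AB)(x,y) = \sum_{z\in Q}A(x,z)B(z,y)$, the triangle inequality $|x-z|+|z-y|\ge|x-y|$, and the crude count $|Q|$ of terms in the sum, one gets $\|AB\|_\beta \le |Q|\,\|A\|_\beta\,\|B\|_\beta$; equivalently $|Q|\,\|\cdot\|_\beta$ is an honest submultiplicative norm.

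Next I would feed in the smallness hypothesis. Writing $H_Q - \lambda' = (\id - (\lambda'-\lambda)R)R^{-1}$ and choosing the universal constant $c$ small enough, the hypothesis $|\lambda'-\lambda| < c\beta|Q|^{-1}e^{-\alpha}$ together with $\|R\|_\beta\le e^{\alpha}$ forces $|Q|\,|\lambda'-\lambda|\,\|R\|_\beta \le \tfrac12$. Hence $\id - (\lambda'-\lambda)R$ is invertible, so $H_Q - \lambda'$ is too, and
\begin{equation*}
(H_Q-\lambda')^{-1} = \big(\id - (\lambda'-\lambda)R\big)^{-1} R = \sum_{n\ge 0}(\lambda'-\lambda)^n R^{n+1}.
\end{equation*}
Applying $|Q|\,\|\cdot\|_\beta$, using the submultiplicativity term by term, and summing the resulting geometric series gives
\begin{equation*}
|Q|\,\|(H_Q-\lambda')^{-1}\|_\beta \le \frac{|Q|\,\|R\|_\beta}{1 - |Q|\,|\lambda'-\lambda|\,\|R\|_\beta} \le 2\,|Q|\,\|R\|_\beta,
\end{equation*}
so $\|(H_Q-\lambda')^{-1}\|_\beta \le 2e^{\alpha}$, which is precisely the claimed bound $|(H_Q-\lambda')^{-1}(x,y)| \le 2e^{\alpha - \beta|x-y|}$.

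There is essentially no obstacle here; the only point worth being careful about is the choice of norm. Using the operator norm instead would only give $\|R\| \lesssim \beta^{-2}e^{\alpha}$ (from summing $e^{-\beta|z|}$ over $\Z^2$), which would degrade the admissible size of $|\lambda'-\lambda|$ by a spurious factor of $\beta^2$ and not match the statement. The weighted norm $\|\cdot\|_\beta$ preserves the decay rate $\beta$ exactly and costs only the volume factor $|Q|$ per multiplication — exactly the dependence appearing in the hypothesis — so the perturbation series closes with the stated constants.
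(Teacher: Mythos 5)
Your proof is correct, and it is close in spirit to the paper's but streamlines the argument in a pleasant way. The paper first establishes existence of $(H_Q-\lambda')^{-1}$ by estimating the operator norm of $R=(H_Q-\lambda)^{-1}$ via its Hilbert--Schmidt norm (picking up a $\beta^{-1}|Q|^{1/2}$ factor) and invoking a contraction, and only then, separately, proves the decay bound by feeding the one-step resolvent identity $R' = R + (\lambda-\lambda')RR'$ into an a priori bound on $\gamma := \max_{x,y} e^{\beta|x-y|-\alpha}|R'(x,y)|$, obtaining $\gamma \le 1 + \tfrac12\gamma$. Your version collapses these two steps into one: by noting that $\|\cdot\|_\beta$ is submultiplicative up to the factor $|Q|$, the full Neumann series $\sum_n(\lambda'-\lambda)^n R^{n+1}$ converges in the weighted seminorm, which simultaneously produces the inverse and the bound $2e^\alpha$. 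The quantity you call $\|\cdot\|_\beta$ is exactly $e^\alpha\gamma$ from the paper, so the two arguments are manipulating the same object; what your packaging buys is avoiding the Hilbert--Schmidt detour entirely, at the small cost of having to verify convergence of the series (trivial here since everything is finite-dimensional). One common fine point shared by both proofs: the inequality $|\lambda'-\lambda||Q|\,\|R\|_\beta \le c\beta$ only gives the needed $\le \tfrac12$ if $\beta$ is bounded above by a universal constant; this is harmless in the application (where $\beta = m \le 1$) but is worth stating.
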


\begin{proof}
Recall the resolvent identity
\begin{equation*}
(H_Q - \lambda')^{-1} = (H_Q - \lambda)^{-1} + (H_Q - \lambda)^{-1} (\lambda - \lambda') (H_Q - \lambda')^{-1}
\end{equation*}
formally obtained by multiplying the equation
\begin{equation*}
(H_Q - \lambda') = (H_Q - \lambda) + (\lambda - \lambda')
\end{equation*}
on the left and right by $(H_Q - \lambda)^{-1}$ and $(H_Q - \lambda')^{-1}$.  Using exponential decay, we can estimate the operator norm of $(H_Q - \lambda)^{-1}$ by
\begin{equation*}
\| (H_Q - \lambda)^{-1} \| \leq \| (H_Q - \lambda)^{-1} \|_2 \leq C \beta^{-1} |Q|^{1/2} e^\alpha.
\end{equation*}
Thus, if $|\lambda' - \lambda| \leq c \beta |Q|^{-1/2} e^{-\alpha}$, then $\| (H_Q - \lambda)^{-1} (\lambda' - \lambda) \| \leq 1/2$ and we can solve the resolvent identity for $(H_Q - \lambda')^{-1}$ by fixed point iteration.  To obtain the decay estimate, we define
\begin{equation*}
\gamma = \max_{x,y \in Q} e^{\beta |x-y| - \alpha} |(H_Q - \lambda')^{-1}(x,y)|.
\end{equation*}
We can use the resolvent identity, the exponential decay hypothesis, and $|\lambda' - \lambda| \leq \tfrac12 |Q|^{-1} e^{-\alpha}$ to estimate
\begin{equation*}
\begin{aligned}
& |(H_Q - \lambda')^{-1}(x,y)| \\
& \leq e^{\alpha - \beta |x-y|} + |\lambda' - \lambda| \sum_{z \in Q} e^{\alpha - \beta |x-z|} e^{\alpha - \beta |z-y|} \gamma \\
& \leq e^{\alpha - \beta |x-y|} + |\lambda' - \lambda| |Q| e^\alpha e^{\alpha - \beta |x-y|} \gamma \\
& \leq e^{\alpha - \beta |x-y|} + \tfrac12 e^{\alpha - \beta |x-y|} \gamma.
\end{aligned}
\end{equation*}
Dividing through by $e^{\alpha - \beta |x-y|}$ and computing the maximum over $x, y \in Q$, we obtain $\gamma \leq 1 + \tfrac12 \gamma$ and the lemma.
\end{proof}

\section{Principal eigenvalue}

We give a maximum principle version of the results of \cite{Bourgain-Kenig}*{Section 4}.  The base case of the multiscale analysis relies on an estimate of the principal eigenvalue of $H$ on a large square.  The basic idea is that, if the set $\{ V = 1 \}$ is an $R$-net in the square $Q$, then the principal eigenvalue is bounded below by $c R^{-2} (\log R)^{-1}$.  The same argument also yields exponential decay of the Green's function.

\begin{definition}
If $X \subseteq Y \subseteq \Z^2$ and $R > 0$, then $X$ is an $R$-net in $Y$ if $\max_{y \in Y} \min_{x \in X} |y - x| \leq R$.
\end{definition}

In the next lemma, we use the $R$-net property to construct a barrier to bound the Green's function.

\begin{lemma}
\label{l.principal}
If $Q \subseteq \Z^2$ is a square, $R \geq 2$ a distance, $Q \cap \{ V = 1 \}$ is an $R$-net in $Q$, and $L = R^2 \log R$, then
\begin{equation*}
|H_Q^{-1}(x,y)| \leq e^{C L - c L^{-1} |x-y|} \quad \mbox{for } x, y \in Q.
\end{equation*}
\end{lemma}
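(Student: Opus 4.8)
The plan is to deduce the lemma from two facts: a lower bound $\lambda_1(H_Q) \ge c L^{-1}$ on the bottom of the spectrum $\lambda_1(H_Q) := \min\sigma(H_Q)$, and a Combes--Thomas argument turning this spectral gap into exponential off-diagonal decay of $H_Q^{-1} = (H_Q - 0)^{-1}$. First note that $H_Q = -\Delta_Q + V_Q$ is positive definite, since $-\Delta_Q$ is positive definite with zero boundary conditions and $V_Q \ge 0$; hence $H_Q^{-1}$ exists and $\dist(0,\sigma(H_Q)) = \lambda_1(H_Q) > 0$.

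\emph{Step 1: spectral gap from the net.} I claim $\lambda_1(H_Q) \ge cL^{-1} = c(R^2\log R)^{-1}$, which by the min--max principle is equivalent to
\[
\|\psi\|_{\ell^2(Q)}^2 \le CL\Big( \mathcal E(\psi) + \sum_{x \in Q} V_x \psi(x)^2 \Big) \qquad \text{for all } \psi \in \ell^2(Q),
\]
where $\mathcal E$ is the Dirichlet form of $-\Delta_Q$ with $\psi$ extended by zero outside $Q$, so that $\mathcal E(\psi) + \sum_x V_x\psi(x)^2 = \langle\psi, H_Q\psi\rangle$. Pick a maximal $2R$-separated subset $\{p_i\} \subseteq Q \cap \{V=1\}$; since $Q \cap \{V=1\}$ is an $R$-net in $Q$, the balls $B(p_i,3R)$ cover $Q$ while the balls $B(p_i,6R)$ have bounded overlap. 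The core input is the local estimate
\[
\max_{x \in B(p_i,3R)} (\psi(x)-\psi(p_i))^2 \le C(\log R)\, \mathcal E_{B(p_i,6R)}(\psi),
\]
which is just the bound $C\log R$ on the effective resistance between two points at distance $\le 3R$ in $\Z^2$: exhibit a unit flow from $p_i$ to $x$ that spreads over $\asymp r$ edges at radius $r$, stays inside $B(p_i,6R)$, and has energy $\asymp \log R$, then pair it against $\psi$ by Cauchy--Schwarz. Summing $\psi(x)^2 \le 2\psi(p_i)^2 + 2(\psi(x)-\psi(p_i))^2$ over the cover and using $|B(p_i,3R)| \le CR^2$, $\sum_i \psi(p_i)^2 \le \sum_x V_x\psi(x)^2$, and $\sum_i \mathcal E_{B(p_i,6R)}(\psi) \le C\mathcal E(\psi)$ yields $\|\psi\|_{\ell^2(Q)}^2 \le CR^2\sum_x V_x\psi(x)^2 + CR^2(\log R)\mathcal E(\psi) \le CL\langle\psi,H_Q\psi\rangle$. (Alternatively one builds a barrier from the lattice equilibrium potential of a single net point, whose two-dimensional capacity relative to a ball of radius $R$ is of order $1/\log R$; this is the route indicated in the introduction and gives the same bound on $\|H_Q^{-1}\|$.)

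\emph{Step 2: Combes--Thomas.} Fix $y \in Q$ and set $\phi(z) = -|z-y|$, which is $1$-Lipschitz. Since $e^{\mu\phi}$ commutes with the diagonal operator $V_Q$, conjugation perturbs only the hopping term: $e^{-\mu\phi}H_Q e^{\mu\phi} = H_Q + B_\mu$, where $B_\mu$ has at most four nonzero entries in each row, each of modulus $\le e^\mu - 1$; hence $\|B_\mu\| \le C\mu$ for $\mu \le 1$. Since $\lambda_1(H_Q) \le \|H_Q\| \le 9$, for $\mu \le c\lambda_1(H_Q)$ we have $\|H_Q^{-1}B_\mu\| \le 1/2$, the Neumann series for $(I + H_Q^{-1}B_\mu)^{-1}$ converges, and $\|(H_Q + B_\mu)^{-1}\| \le 2\lambda_1(H_Q)^{-1}$. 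From $H_Q^{-1} = e^{\mu\phi}(H_Q + B_\mu)^{-1}e^{-\mu\phi}$ and $\phi(x)-\phi(y) = -|x-y|$ we get $|H_Q^{-1}(x,y)| \le 2\lambda_1(H_Q)^{-1} e^{-\mu|x-y|}$; taking $\mu = c\lambda_1(H_Q)$ and inserting $\lambda_1(H_Q) \ge cL^{-1}$ from Step 1 gives $|H_Q^{-1}(x,y)| \le CL\, e^{-cL^{-1}|x-y|} \le e^{CL - cL^{-1}|x-y|}$.

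\emph{Main obstacle.} The real content is Step 1, and in it the genuinely two-dimensional fact that the effective resistance — equivalently, the logarithmic capacity of a point — in $\Z^2$ grows like $\log R$ rather than like $R$: a crude single-path estimate would give only $\lambda_1(H_Q) \ge cR^{-3}$, which is too weak to reach the scale $L = R^2\log R$ and does not imply the stated decay. A minor technical nuisance is that the balls $B(p_i,6R)$ may poke out of $Q$; this is harmless since $\psi$ vanishes there, and in the barrier formulation one uses that the Dirichlet boundary of $Q$ is itself a trap at least as strong as a net point.
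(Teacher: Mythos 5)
Your proof is correct, but it takes a genuinely different route from the paper's. The paper works entirely with the maximum principle: it builds an explicit barrier $\psi$ satisfying $H\psi \geq cR^{-2}$ and $1 \leq \psi \leq C\log R$ (using the lattice Green's function $G$ and its logarithmic growth), then multiplies by the exponential weight $e^{-\ep R^{-2}(\log R)^{-1}|x-y|}$ to produce a supersolution $\rho_y$ of the equation solved by $H_Q^{-1}(\cdot,y)$, and concludes by comparison. You instead prove a discrete Poincar\'e inequality $\|\psi\|_{\ell^2(Q)}^2 \leq CL\langle\psi,H_Q\psi\rangle$ via the effective-resistance estimate (which is where the $\log R$ enters), giving the spectral gap $\lambda_1(H_Q) \geq cL^{-1}$, and then run a standard Combes--Thomas conjugation to upgrade the norm bound to off-diagonal exponential decay. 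Both arguments isolate exactly the same two-dimensional input — that the log-capacity of a point relative to a radius-$R$ ball is of order $(\log R)^{-1}$ rather than $R^{-1}$ — and both produce the right scale $L = R^2\log R$; your own ``main obstacle'' remark correctly identifies that a naive single-path resistance bound would only give $\lambda_1 \gtrsim R^{-3}$, which is too weak. What each route buys: the paper's barrier approach is self-contained, avoids operator-theoretic machinery, and yields positivity $H_Q^{-1} \geq 0$ for free from the comparison principle; your route is more modular, cleanly separating the variational lower bound from the decay mechanism, and Combes--Thomas is a well-worn tool that requires no cleverness once the gap is known. One small point worth spelling out in Step 1 is that $\mathcal E$ should be the Dirichlet form of the zero-extension of $\psi$ (so edges crossing $\partial Q$ contribute), which you implicitly use when you say the balls $B(p_i,6R)$ poking out of $Q$ are harmless; this is indeed the correct form since $\langle\psi,H_Q\psi\rangle = \mathcal E(\tilde\psi) + \sum_x V_x\tilde\psi(x)^2$ for the zero-extension $\tilde\psi$.
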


\begin{proof}
Recall (or see \cite{Evans}*{Exercise 6.14}) that the principal eigenvalue can be computed via
\begin{equation*}
\lambda_\mathrm{min} = \sup_{\psi : Q \to (0,\infty)} \inf_Q \frac{H \psi}{\psi}.
\end{equation*}
To estimate $\lambda_\mathrm{min}$, we construct a test function $\psi$.  Since we only care about the square $Q$, we may assume that $X = \{ V = 1 \}$ is an $R$-net in all of $\Z^2$.  We may also assume that $R \geq C$ is large.

\begin{claim}
There is a $\psi : \Z^2 \to \R$ such that $H \psi \geq c R^{-2}$ and $1 \leq \psi \leq C \log R$.
\end{claim}

Let $G : \Z^2 \to \R$ denote lattice Green's function.  That is, $G$ is the unique function that satisfies $G(0) = 0$, $G \leq 0$, and $-\Delta G = \id_{\{ 0 \}}$.  Observe that there is a universal small $\ep > 0$ such that, if $R \geq C$, the function
\begin{equation*}
\varphi(x) = 1 - G(x) - \ep R^{-2} |x|^2
\end{equation*}
satisfies
\begin{equation*}
(-\Delta + \id_{\{ 0 \}}) \varphi \geq R^{-2} \ep \quad \mbox{and} \quad 1 \leq \varphi \leq C \log R \quad \mbox{in } B_{3R}
\end{equation*}
and
\begin{equation*}
\min_{B_{3R} \setminus B_{2R}} \varphi \geq \max_{B_R} \varphi.
\end{equation*}
We define $\psi : \Z^2 \to \R$ by
\begin{equation*}
\psi(y) = \min_{x \in B_{3R}(y) \cap X} \varphi(y-x).
\end{equation*}
Observe that, since $X$ is an $R$-net and $\min_{B_{3R} \setminus B_{2R}} \varphi \geq \max_{B_R} \varphi$, we have
\begin{equation*}
\psi(y) = \min_{x \in B_{2R}(y) \cap X} \varphi(y-x).
\end{equation*}
Thus, for any $y \in \Z^2$, we can pick an $x \in B_{2R}(y) \cap X$ such that
\begin{equation*}
\psi(y) = \varphi(y - x) \quad \mbox{and} \quad \psi(z) \leq \varphi(z - x) \quad \mbox{for } |z-y| = 1.
\end{equation*}
This implies $1 \leq \psi(y) \leq C \log R$ and $H \psi(y) \geq H \varphi(y - x) \geq \ep R^{-2}$.

\begin{claim}
$0 \leq H_Q^{-1}(x,y) \leq e^{C L - c L^{-1} |x-y|}$ for all $x, y \in \Z^2$.
\end{claim}

Note that, for any $u, v : \Z^2 \to \R$,
\begin{equation*}
H (u v)(x) = u(x) H v(x) - \sum_{|y-x| = 1} (u(y) - u(x)) v(y).
\end{equation*}
It follows that there is a universal small $\ep > 0$ such that, for all $y \in \Z^2$, the function
\begin{equation*}
\rho_y(x) = e^{- \ep R^{-2} (\log R)^{-1} |x-y|} \psi(x)
\end{equation*}
satisfies
\begin{equation*}
\begin{aligned}
H \rho_y(x) & \geq e^{-\ep R^{-2} (\log R)^{-1} |x-y|} (H \psi (x) - C \ep R^{-2}) \\
& \geq e^{-\ep R^{-2} (\log R)^{-1} |x-y|} (c R^{-2} - C \ep R^{-2}) \\
& \geq c R^{-2} \id_{\{ y \}}.
\end{aligned}
\end{equation*}
Since the potential $V$ is non-negative, the Hamiltonian $H$ has a comparison principle.  Since $\rho_y \geq 0$, we conclude that $C R^2 \rho_y$ is a supersolution of the equation solved by $x \mapsto H_Q^{-1}(x,y)$.  In particular, $0 \leq H_Q^{-1}(x,y) \leq C R^2 \rho_y(x) \leq e^{C L - c L^{-1}|x-y|}$.
\end{proof}

\section{Multiscale analysis}
\label{s.multiscale}

We now assemble our ingredients into a proof of \tref{main} by following the outline of \cite{Bourgain-Kenig}.  In this section, we assume that, unless otherwise specified, all squares have dyadic side length and are half-aligned.  That is, all squares have the form
\begin{equation*}
Q = x + [0,2^n)^2 \cap \Z^2 \quad \mbox{for } x \in 2^{n-1} \Z^2.
\end{equation*}

We need a simple covering lemma.

\begin{lemma}
\label{l.cover}
If $K \geq 1$ an integer, $\alpha \geq C^K$ a dyadic integer, $L_0 \geq \alpha L_1 \geq L_1 \geq \alpha L_2 \geq L_2$ dyadic scales, $Q \subseteq \Z^2$ an $L_0$-square, and $Q''_1, ..., Q''_K \subseteq Q$ are $L_2$-squares, then there is a dyadic scale $L_3 \in [L_1,\alpha L_1]$ and disjoint $L_3$-squares $Q'_1, ..., Q'_K \subseteq Q$ such that,
\begin{equation}
\label{e.coverproperty}
\mbox{for every $Q''_k$, there is $Q'_j$, such that $Q''_k \subseteq Q'_j$ and $\dist(Q''_k,Q \setminus Q'_j) \geq \tfrac18 L_3$}.
\end{equation}
Note that the inequality in \eref{coverproperty} guarantees that $Q''_k$ is far from the part of the boundary of $Q'_j$ that intersects $Q$.
\end{lemma}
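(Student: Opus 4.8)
The plan is to prove, by induction on $K$, a slightly more flexible statement, from which \lref{cover} follows immediately. Namely: there is a sufficiently large universal dyadic integer $C$ so that, if $K \geq 1$, $\alpha \geq C^K$ is a dyadic integer, $L_0 \geq \alpha L_1 \geq L_1$ are dyadic scales, $Q \subseteq \Z^2$ is an $L_0$-square, and $B_1, \dots, B_K \subseteq Q$ are axis-parallel boxes of side at most $\tfrac14 L_1$ in each coordinate, then there is a dyadic scale $L_3 \in [L_1, \alpha L_1]$ and disjoint half-aligned $L_3$-squares $Q'_1, \dots, Q'_K \subseteq Q$ such that, for every $k$, there is a $j$ with $B_k \subseteq Q'_j$ and $\dist(B_k, Q \setminus Q'_j) \geq \tfrac18 L_3$. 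Since the hypothesis $L_1 \geq \alpha L_2$ forces $L_2 \leq \tfrac14 L_1$, taking $B_k = Q''_k$ recovers \lref{cover}.

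The base case $K = 1$ (with $L_3 = L_1$) is the purely one-dimensional observation that a box of side $\leq \tfrac14 L_1$ inside $Q$ lies in some half-aligned $L_1$-square $Q'_1 \subseteq Q$ keeping it at distance $\geq \tfrac18 L_1$ from every side of $Q'_1$ that meets the interior of $Q$. Working one coordinate at a time: the half-aligned $L_1$-intervals inside the $L_0$-interval $Q$ have left endpoints on a $\tfrac12 L_1$-progression; the admissible positions for a fixed box-interval (those leaving a margin of $\tfrac18 L_1$ on the part of the interval interior to $Q$) form a union of intervals of length $\geq \tfrac34 L_1 - \tfrac14 L_1 = \tfrac12 L_1$ whose consecutive members overlap, and the two extreme intervals, being adjacent to $\partial Q$, only have to cover a free outer margin; so every position is admissible, and we choose the two coordinates independently. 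The same computation shows in general that any box of side $\leq \tfrac14 L_1$ inside $Q$ lies in \emph{some} admissible half-aligned $L_1$-square inside $Q$ (here the length $\geq \tfrac12 L_1$ is exactly what guarantees a point of the $\tfrac12 L_1$-grid in the admissible window), and that the center of \emph{any} admissible half-aligned $L_1$-square is within $\ell^\infty$-distance $\tfrac38 L_1$ of the center of the box.

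For the inductive step set $w = \min_{k \neq k'} \dist(B_k, B_{k'})$ in $\ell^\infty$. If $w \geq 2 L_1$, take $L_3 = L_1$ and let $Q'_k \subseteq Q$ be any admissible half-aligned $L_1$-square containing $B_k$, which exists by the previous paragraph; these are pairwise disjoint because the centers of $Q'_k$ and $Q'_{k'}$ are at $\ell^\infty$-distance at least $\dist(B_k, B_{k'}) - 2 \cdot \tfrac38 L_1 \geq 2 L_1 - \tfrac34 L_1 = \tfrac54 L_1 > L_1$. If $w < 2 L_1$, pick a minimizing pair, replace the two boxes by their common bounding box $B^\ast$ (of side $< 2 L_1 + 2 \cdot \tfrac14 L_1 = \tfrac52 L_1 \leq \tfrac14 (16 L_1)$ in each coordinate), and apply the inductive hypothesis to the $K-1$ boxes consisting of $B^\ast$ and the unmerged $B_k$, with parameters $L_1 \rightsquigarrow 16 L_1$, $\alpha \rightsquigarrow \alpha/16$, and $L_0$ unchanged; the hypotheses hold since $\alpha/16 \geq C^{K-1}$ (using $\alpha \geq C^K$ and $C \geq 16$, say) and $L_0 \geq \alpha L_1 = (\alpha/16)(16 L_1)$, and note that $\alpha/16$ remains a dyadic integer. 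This produces a scale $L_3 \in [16 L_1, \alpha L_1] \subseteq [L_1, \alpha L_1]$ and $K - 1$ disjoint half-aligned $L_3$-squares covering all $K$ original boxes, since the square absorbing $B^\ast$ absorbs, with even more room, the two boxes that formed it. Finally we pad the list with one more half-aligned $L_3$-square inside $Q$ disjoint from the others: this is possible because $Q$ contains roughly $(L_0/L_3)^2$ disjoint aligned $L_3$-squares, each of the finitely many squares already chosen meets at most four of them, and $L_0/L_3 \geq C^K/16^{K-1}$ dwarfs $\sqrt K$.

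The only delicate point is the half-alignment quantization used in the base case and in the case $w \geq 2 L_1$: one must check both that a box of side $\leq \tfrac14 L_1$ always fits into a half-aligned $L_1$-square with margin $\geq \tfrac18 L_1$ (it is precisely this that forces the bound $\tfrac14 L_1$ in the hypothesis, since the admissible window of centers must contain a point of the $\tfrac12 L_1$-grid), and that even when each box has essentially one admissible square the centers of the squares of two boxes at distance $\geq 2 L_1$ are still more than $L_1$ apart, so the squares are disjoint. Everything else — propagating the constants through the recursion (which preserves the top scale $\alpha L_1$ and multiplies $L_1$ by $16$ at each of the at most $K-1$ steps) and producing the padding squares — is routine once $C$ is a sufficiently large universal constant.
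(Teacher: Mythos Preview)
Your proof is correct and is essentially the same argument as the paper's, just repackaged as an induction on $K$ rather than an iterative algorithm. The paper starts with $L_3=L_1$, picks one covering square per $Q''_k$, and then repeatedly (at most $K-1$ times) deletes one of two overlapping squares while enlarging all squares by a universal constant factor to restore \eref{coverproperty}; your ``merge the two closest boxes and replace $L_1$ by $16L_1$'' step is exactly one iteration of this, and your case $w\geq 2L_1$ is their termination condition. Your write-up is considerably more explicit about two points the paper leaves implicit --- the half-alignment quantization in the base case, and the counting argument for the padding squares --- though your $\tfrac38 L_1$ bound on the center displacement is sharp only away from $\partial Q$ (near the boundary the displacement can reach $\tfrac12 L_1$, which still gives center separation $\geq L_1$ and hence disjointness for half-open squares, so nothing breaks).
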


\begin{proof}
Start with $L_3 = L_1$ and select any list of $L_3$-squares $Q'_1, ..,. Q'_K \subseteq Q$ so that \eref{coverproperty} holds.  Initially, the $Q'_k$ may not be disjoint.  We modify this family, decreasing the size of the family while increasing the size of the squares.  We iterate the following: If $Q'_j \cap Q'_k \neq \emptyset$ for some $j < k$, then we delete $Q'_k$ from the list and increase the size of all the squares by a constant (universal) factor to maintain \eref{coverproperty}.  This process must stop after at most $K-1$ stages.  Thus, having $\alpha \geq C^K$ is enough room to find a scale $L_3$ that works. Finally, let $Q'_{K'+1}, ..., Q'_K \subseteq Q$ be any additional $L_3$-squares such that $Q'_1, ..., Q'_K$ are disjoint.
\end{proof}

\tref{main} is implied by the following multiscale analysis.

\begin{theorem}
\label{l.main2}
For every $\gamma \in (\tfrac14,\tfrac12)$, there are
\begin{enumerate}
\item small $1 > \ep > \nu > \delta > 0$
\item integer $M \geq 1$
\item dyadic scales $L_k$, for $k \geq 0$, with $\lfloor \log_2 L_k^{1-6\ep} \rfloor = \log_2 L_{k-1}$
\item decay rates $1 \geq m_k \geq L_k^{-\delta}$
\item random sets $F_k \subseteq F_{k+1} \subseteq \Z^2$
\end{enumerate}
such that
\begin{enumerate}
\setcounter{enumi}{5}
\item $F_k$ is $\eta_k$-regular in $Q$ for $\ell(Q) \geq L_k$, where $\eta_k = \ep^2 + L_0^{-\ep} + \cdots + L_k^{-\ep} < \ep$.
\item $F_k \cap Q$ is $V_{F_{k-1} \cap 2Q}$-measurable for $\ell(Q) \geq L_k$
\item if $\ell(Q) = L_k$, $0 \leq \bar \lambda \leq e^{-L_M^\delta}$, and $\mathcal E_g(Q)$ denotes the event that
\begin{equation*}
|(H_Q - \bar \lambda)^{-1}(x,y)| \leq e^{L_k^{1-\ep} - m_k |x-y|} \quad \mbox{for } x, y \in Q
\end{equation*}
holds, then
\begin{equation*}
\P[\P[\mathcal E_g(Q) | V_{F_k \cap Q}] = 1] \geq 1 - L_k^{-\gamma}.
\end{equation*}
\item $m_k \geq m_{k-1} - L_k^{-\nu}$ for $k > M$.
\end{enumerate}
\end{theorem}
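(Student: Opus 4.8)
The plan is to prove Theorem~\ref{l.main2} by induction on $k$, following the Bourgain--Kenig multiscale bootstrap with \tref{continuation}, \tref{sperner}, \lref{minmax}, and \lref{wegner} substituted into the places that previously used the continuous-noise Wegner estimate.

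\emph{Constants and base case.} Given $\gamma\in(\tfrac14,\tfrac12)$, I would first fix $\ep>0$ small enough that $\tfrac12-C\ep>\gamma$ (so the Wegner bound $1-L^{C\ep-1/2}$ beats $L^{-\gamma}$) and $\gamma>12\ep$, then fix $\ep>\nu>\delta>0$ with $\nu$ small enough that the per-step decay loss $L_k^{-(1-6\ep)\delta_*}$ --- where $\delta_*\in(\nu,\ep)$ is the local parameter used when invoking \lref{multiscale} --- is at most $L_k^{-\nu}$, and fix the dyadic scales by $\log_2 L_{k-1}=\lfloor(1-6\ep)\log_2 L_k\rfloor$; the factor $1-6\ep$ leaves room for the six intermediate scales required by \lref{wegner} and \lref{multiscale}. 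For $0\le k\le M$ I would take $F_k=F_0$ a fixed $R'$-spaced sublattice with $R=L_M^{\delta/4}$ and $R'=R/\sqrt{C\log L_M}$; for $L_M$ large this is $\eta_0$-regular, giving (6), and being deterministic it is $V_{F_{k-1}\cap 2Q}$-measurable, giving (7). Since $\bar\lambda\le e^{-L_M^\delta}$ lies well below both $cL_k^{-2}$ and the threshold of \lref{continuity}, on the event (of probability $\ge 1-L_k^{-\gamma}$, and $V_{F_0\cap Q}$-measurable) that $\{x\in F_0\cap Q:V_x=1\}$ is an $R$-net in $Q$, \lref{principal} and \lref{continuity} force $\mathcal E_g(Q)$ with $m_k=c(R^2\log R)^{-1}\ge L_k^{-\delta}$; this is (8), and (9) is vacuous for $k\le M$.

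\emph{Inductive step.} Fix $k>M$ and an $L_k$-square $Q$, and condition on $V_{F_{k-1}\cap 2Q}$. Tile $\tfrac12 Q$ by $L_{k-1}$-squares $Q_i$, calling $Q_i$ \emph{bad} if $\P[\mathcal E_g(Q_i)\mid V_{F_{k-1}\cap Q_i}]<1$; by (8) at scale $k-1$ each bad event has probability $\le L_{k-1}^{-\gamma}$ and is $V_{F_{k-1}\cap Q_i}$-measurable, so bad events for disjoint $Q_i$ are independent. Choosing the integer $K$ large in terms of $\gamma,\ep$, a union bound over $(K{+}1)$-tuples of disjoint subsquares shows that, off an event of probability $\le L_k^{-\gamma}$, at most $K$ bad squares are pairwise disjoint; hence all bad squares lie within $CL_{k-1}$ of one of these, the bad set $B$ has $|B|\le CKL_{k-1}^2\le L_k^{-\ep}|Q|$, and by \lref{cover} the bad squares sit inside $\le K$ disjoint $L_3$-squares $Q'_1,\dots,Q'_K$ with $L_3\in[L_k^{1-2\ep},L_k^{1-\ep}]$, each well away from the part of its boundary meeting $Q$. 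Put $F_k\cap Q=(F_{k-1}\cap Q)\cup B$: this is $V_{F_{k-1}\cap 2Q}$-measurable, giving (7), and the area bound together with the separation in \lref{cover} and the regularity of $F_{k-1}$ upgrades $\eta_{k-1}$ to $\eta_k=\eta_{k-1}+L_k^{-\ep}$, giving (6). On this event every point of $Q$ is either well inside a good $L_{k-1}$-square (resolvent decaying like $e^{L_{k-1}^{1-\ep}-m_{k-1}|y-z|}$ by induction) or in some $Q'_j$; applying \lref{wegner} with $L_0=L_k$, $F=F_k$, defect squares $Q'_j$, and $G$ the (bounded, by \lref{kl}) carrier of the near-$\bar\lambda$ eigenfunctions gives $\P[\|R_Q\|\le e^{L_1}\mid V_{F_k\cap Q}]\ge 1-L_k^{C\ep-1/2}\ge 1-L_k^{-\gamma}$, while the geometric resolvent identity of \sref{multiscale} with \lref{crudecontinuation} and \lref{continuity} also yields $\|R_{Q'_j}\|\le e^{L_4}$ there; then \lref{multiscale}, with small scale $L_{k-1}$, produces $|(H_Q-\bar\lambda)^{-1}(x,y)|\le e^{L_k^{1-\ep}-m_k|x-y|}$ with $m_k=m_{k-1}-L_{k-1}^{-\delta_*}\ge m_{k-1}-L_k^{-\nu}$, which is (8) and (9). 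Finally, choosing $M$ large enough that $L_M$ dominates the implied constants and $\sum_{j>M}L_j^{-\nu}<\tfrac12 m_M$ keeps $m_k\ge\tfrac12 m_M\ge L_k^{-\delta}$ for all $k$ since $L_k^{-\delta}\to 0$, which is (4).

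\emph{Main obstacle.} The hard part will be the probabilistic bookkeeping in the inductive step: arranging the conditioning so that ``badness'' of the $L_{k-1}$-subsquares is measurable with respect to mutually independent coordinate blocks (legitimizing the $(K{+}1)$-tuple union bound and the $V_{F_{k-1}\cap 2Q}$-measurability of $F_k\cap Q$), and checking that freezing $B$ raises the regularity parameter by only $L_k^{-\ep}$ --- together with verifying hypothesis (8) of \lref{wegner}, namely that every near-$\bar\lambda$ eigenfunction of $H_Q$ is $e^{-L_4}$-small away from the $\le K$ defects and, via \lref{kl} and \lref{minmax}, supported on $\le L_k^\delta$ sites. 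This last point is exactly the interface between the deterministic multiscale propagation of \sref{multiscale} and the new unique-continuation and Sperner machinery, and is where the argument departs most from the continuous-noise case.
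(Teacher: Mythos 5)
Your overall architecture — base case from \lref{principal} plus \lref{continuity}, then an inductive step that freezes the bad subsquares and invokes \lref{wegner} and \lref{multiscale} — matches the paper's in spirit, but you are missing the key device that makes the Wegner estimate applicable, and the order in which you invoke \lref{wegner} and \lref{multiscale} is inverted.

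The essential missing idea is the \emph{hereditary} bad subsquare construction. Hypotheses (6) and (8) of \lref{wegner} demand a set $G$ with $|G|\le L_0^\delta$ on which every near-$\bar\lambda$ eigenfunction concentrates; this is precisely what feeds the almost-orthogonality bound of \lref{kl} to control the number of eigenvalue crossings. You only track bad $L_{k-1}$-subsquares, and the union of even a bounded number of them has area $\sim L_{k-1}^2 \approx L_k^{2-12\ep}$, which is nowhere near $L_k^\delta$. The paper fixes $M$ so that $L_k^{\delta}\ge L_{k-M}\ge L_k^{\delta/2}$ and chases \emph{chains} $Q\supseteq Q_1\supseteq\cdots\supseteq Q_M$ of bad squares down $M$ scales; only the terminal $L_{k-M}$-squares are placed in $G$, giving $|G|\lesssim N L_k^\delta$. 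Without this $M$-level descent you cannot verify hypothesis (6) of \lref{wegner}, and you explicitly flag this as the hard part without resolving it. Relatedly, your application of \lref{wegner} ``with $L_0=L_k$'' to $Q$ itself, and the claim that the geometric resolvent identity ``also yields $\|R_{Q'_j}\|\le e^{L_4}$,'' is circular: \lref{multiscale} requires a resolvent bound on each defect square $Q'_j$ as input, and the only way to obtain it is to apply \lref{wegner} \emph{to} $Q'_j$ (whose side is $\sim L_k^{1-2\ep}$, not $L_k$), which is what the paper does. The geometric resolvent identity cannot produce a bound on $R_{Q'_j}$ because $Q'_j$ by design contains the bad subsquares for which no resolvent control is available.

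There is also a smaller problem in the base case. You fix a single net radius $R=L_M^{\delta/4}$ and a single frozen sublattice for all $k\le M$, which yields a $k$-independent decay rate $m_k\approx c(R^2\log R)^{-1}\approx L_M^{-\delta/2}$. But with the paper's choice of $M$ one has $L_0\approx L_M^{\delta'}$ for some $\delta'\le\delta$, so $L_0^{-\delta}\approx L_M^{-\delta\delta'}\ge L_M^{-\delta^2}\gg L_M^{-\delta/2}$ for small $\delta$, violating the required $m_k\ge L_k^{-\delta}$ at the bottom of the hierarchy. The paper avoids this by using the fixed dense sublattice $\lceil\ep^{-2}\rceil\Z^2$ and a scale-dependent net radius $\sim L_k^{\delta/3}$ for each $k\le M$, so that the rate from \lref{principal} scales with $L_k$.
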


\begin{proof}
{\bf Step 1.} We set up the base case.  Assume $\ep, \nu, \delta, M, L_k$ are as in (1-3).  We impose constraints on these objects during the proof.  For the base case, we define $F_k = \lceil \ep^{-2} \rceil \Z^2$ and $m_k = L_k^{-\delta}$ for $k = 0, ..., M$.

\begin{claim}
If $L_0 \geq C_{\ep,\delta}$, then (1-9) hold for $k = 0, ..., M$.
\end{claim}

Fix $k = 0, ..., M$.  Every ball $B_{L_k^{\delta/3}}(x)$ contains $c \ep^4 L_k^{2 \delta / 3}$ elements of $F_k$.  A union bound gives
\begin{equation*}
\P\left[\max_{x \in Q} \min_{y \in Q \cap F_k \cap \{ V = 1 \}} |x-y| \geq C L_k^{\delta/3}\right] \leq L_k^2 e^{-c \ep^4 L_k^{2\delta/3}}.
\end{equation*}
Thus, by \lref{principal} and \lref{continuity}, we see that, for every $0 \leq \bar \lambda \leq e^{-L_M^\delta} \leq e^{-L_k^\delta}$ and $L_k$-square $Q$,
\begin{equation*}
\P[\P[\mathcal E_g(Q)|V_{F_k \cap Q}] = 1] \geq 1 - e^{-L_k^{\delta/3}} \geq 1 - L_k^{-\gamma}.
\end{equation*}
In particular, (8) holds.  Since $F_k$ is $\ep^2$-regular in all large squares and making $L_0$ large gives $\ep^2 < \eta_k < \ep$, we see that (6) holds.  The other properties are immediate.

{\bf Step 2.}  We set up the induction step.  We choose $M \geq 1$ so that
\begin{equation*}
L_k^{\delta} \geq L_{k-M} \geq L_k^{\delta/2} \quad \mbox{for } k > M.
\end{equation*}
We call an $L_j$-square $Q$ ``good'' if
\begin{equation*}
\P[\mathcal E_g(Q) | V_{F_j \cap Q}] = 1.
\end{equation*}
That is, an $L_j$-square is good if, after observing the potential on the frozen sites $F_j \cap Q$, we see that $(H_Q - \bar \lambda)^{-1}$ is well-behaved no matter what happens to the potential on $Q \setminus F_j$.  Note that the event that $Q$ is good is $V_{F_j \cap Q}$-measurable.  An $L_j$-square that is not good is ``bad.''  We must control the bad squares in order to apply \lref{multiscale}.

Suppose that $Q$ is an $L_k$-square and we have chain
\begin{equation*}
Q \supseteq Q_1 \supseteq \cdots \supseteq Q_M
\end{equation*}
with $Q_i$ bad and $\ell(Q_i) = L_{k-i}$ for $i = 1, ..., M$.  We call $Q_M$ a ``hereditary bad subsquare'' of $Q$.  Note that the set of hereditary bad subsquares of $Q$ is a $V_{F_{k-1} \cap Q}$-measurable random variable.  We control the number of hereditary bad subsquares using the following claim.

\begin{claim}
If $\ep < c$ and $N \geq C_{M,\gamma,\delta}$, then, for all $k > M$, 
\begin{equation*}
\P[\mbox{$Q$ has fewer than $N$ hereditary bad subsquares}] \geq 1 - L_k^{-1}.
\end{equation*}
\end{claim}

Writing $N = (N')^M$, we can use the induction hypothesis to estimate
\begin{equation*}
\begin{aligned}
& \P[\mbox{$Q$ has more than $N$ hereditary bad subsquares}] \\
& \leq \sum_{\substack{Q' \subseteq Q \\ \ell(Q') = L_j \\ k-M < j \leq k}} \P[\mbox{$Q'$ has more than $N'$ bad $L_{j-1}$-subsquares}] \\
& \leq \sum_{\substack{Q' \subseteq Q \\ \ell(Q') = L_j \\ k-M < j \leq k}} (L_j/L_{j-1})^{CN'} (L_{j-1}^{-\gamma})^{c N'} \\
& \leq \sum_{k-M < j \leq k} (L_k/L_j)^C (L_j/L_{j-1})^{CN'} (L_{j-1}^{-\gamma})^{c N'} \\
& \leq C M L_k^C \max_{k - M < j \leq k} L_{j-1}^{(C\ep - c \gamma) N'} \\
& \leq C M L_k^C (L_k^{(C \ep - c \gamma) N'} + L_k^{(C \ep - c \gamma) \delta N'}).
\end{aligned}
\end{equation*}
The claim follows making $\ep < c$ and $N' \geq C_{M,\gamma,\delta}$.

We now fix an integer $N \geq 1$ as in the claim.  We call an $L_k$-square $Q$ ``ready'' if $k > M$ and $Q$ has fewer than $N$ hereditary bad $L_{k-M}$-subsquares.  Note the event that $Q$ is ready is $V_{F_{k-1} \cap Q}$-measurable.

Suppose the $L_k$-square $Q$ is ready.  Let $Q'''_1, ..., Q'''_N \subseteq Q$ be a list of $L_{k-M}$-squares that includes every hereditary bad $L_{k-M}$-subsquare of $Q$.  Let $Q''_1, ..., Q''_N \subseteq Q$ be a list of $L_{k-1}$-squares that includes every bad $L_{k-1}$-subsquare of $Q$.  (Since each bad $L_{k-1}$ subsquare contains at least one hereditary bad $L_{k-M}$-subsquare, their number is also bounded by $N.$)  Applying \lref{cover}, we can choose a dyadic scale $L' \in [c_N L_k^{1-2\ep}, L_k^{1-2\ep}]$ and disjoint $L'$-squares $Q'_1, ..., Q'_N \subseteq Q$ such that, for every $Q''_i$, there is a $Q'_j$ such that $Q''_i \subseteq Q'_j$ and $\dist(Q''_i, Q \setminus Q'_j) \geq \tfrac18 L'$.  Note that we can choose $Q'_i, Q''_i, Q'''_i$ in a $V_{F_{k-1} \cap 2Q}$-measurable way.

For $k > M$, we define $F_k$ to be the union of $F_{k-1}$ and the subsquares $Q'_1, ..., Q'_K \subseteq Q$ of each ready $L_k$-square $Q$.  For $k > M$, we define $m_k = m_{k-1} - L_k^{-\nu}$.

{\bf Step 3.}  Having verified properties (1-9) for $k = 0, ..., M$, we now verify properties (1-9) for $k > M$ by induction.  Note that (1-5) and (9) are automatic from the definitions.  We must verify (6-8) for $k > M$, assuming (1-9) holds for all $j < k$.

\begin{claim}
Properties (6) and (7) hold.
\end{claim}

For each $L_k$-square $Q$, the event that $Q$ is ready, the scale $L'$, and the $L'$-squares $Q'_i \subseteq Q$ are all $V_{F_{k-1} \cap Q}$ measurable.  Thus, $F_k \cap Q$ is $V_{F_{k-1} \cap 2Q}$ measurable.  Note that we have $2Q$ in place of $Q$ because each $L_k$-square $Q$ intersects $8$ other half-aligned $L_k$-squares.  In particular, (7) holds.

To see (6), observe that, for each $L_k$-square $Q$, the set $Q \cap F_k \setminus F_{k-1}$ is covered by at most $9N$ squares $Q'_i$ of size less than $L_k^{1-2\ep}$.  In particular, if $Q''$ is tilted square, $Q \cap F_{k-1}$ is $\eta_{k-1}$-sparse in $Q''$, and $Q \cap F_k$ is not $\eta_k$-sparse in $Q''$, then $Q''$ must intersect one of the $Q'_i$ and have size at most $L_k^{1-\ep}$.  This implies that $F_k \cap Q$ is $\eta_k$-regular in $Q$.

\begin{claim}
If the $L_k$-square $Q$ is ready, $|\lambda - \bar \lambda| \leq e^{-L_{k-1}^{1-\ep}}$, and $H_{Q'_i} \psi = \lambda \psi$, $E = Q'_i \setminus \cup_j Q''_j$, and $G = Q'_i \cap \cup_j Q'''_j$, then
\begin{equation*}
e^{cL_{k-1}^{1-\delta}} \| \psi \|_{\ell^\infty(E)} \leq \| \psi \|_{\ell^2(Q'_i)} \leq (1 + e^{-c L_{k-M}^{1-\delta}}) \| \psi \|_{\ell^2(G)}.
\end{equation*}
\end{claim}

If $x \in Q_i' \setminus G$, then there is a $j = 1, ..., M$ and a good $L_{k-j}$-square $Q'' \subseteq Q'_i$ with $x \in Q''$ and $\dist(x, Q'_i \setminus Q'') \geq \tfrac18 L_{k-j}$.  Moreover, if $x \in E$, then $j = 1$.  By the definition of good and \lref{continuity},
\begin{equation*}
|\psi(x)| \leq 2 e^{L_{k-j}^{1-\ep} - \tfrac18 m_{k-j} L_{k-j}} \| \psi \|_{\ell^2(Q'_i)} \leq e^{- c L_{k-j}^{1-\delta}} \| \psi \|_{\ell^2(Q'_i)}.
\end{equation*}
In particular, we see that
$$ \| \psi \|_{\ell^\infty(E)} \leq e^{-c L_{k-1}^{1-\delta}} \| \psi \|_{\ell^2(Q'_i)}$$
and
$$ \| \psi \|_{\ell^\infty(Q_i' \setminus G)} \leq e^{-c L_{k-M}^{1-\delta}} \| \psi \|_{\ell^2(Q'_i)}.$$
Together these imply the claim.

\begin{claim}
If $Q$ is an $L_k$-square and $\mathcal E_i(Q)$ denotes the event that
\begin{equation*}
\mbox{$Q$ is ready and $\P[ \| (H_{Q'_i} - \bar \lambda)^{-1}\| \leq e^{L_k^{1-4 \ep}} | V_{F_k \cap 4Q}] = 1,$}
\end{equation*}
then $\P[\mathcal E_i(Q)] \geq 1 - L_k^{C \ep - 1/2}$.
\end{claim}

Recall the event $Q$ ready and squares $Q'_i \subseteq Q$ are $V_{F_{k-1} \cap 2Q}$-measurable.  We may assume $i = 1$.  We apply \lref{wegner} to the square $Q'_1$ with scales $L' \geq L_k^{1-4\ep} \geq L_k^{1-4\ep} \geq L_{k-1} \geq L_{k-1}^{1-\delta} \geq L_{k-1}^{1-\ep}$, frozen set $F_{k-1}$, defects $\{ Q''_j : Q''_j \subseteq Q'_1 \}$, and $G = \cup \{ Q'''_j : Q'''_j \subseteq Q'_1 \}$.  Assuming $\ep > 2 \delta$, the previous claim provides the localization required to verify the hypotheses of \lref{wegner}.  Since $Q'_1 \subseteq F_k$ when $Q$ is ready, the claim follows.

\begin{claim}
If $Q$ is an $L_k$-square and $\mathcal E_1(Q), ..., \mathcal E_N(Q)$ hold, then $Q$ is good.
\end{claim}

We apply \lref{multiscale} to the square $Q$ with small parameters $\ep > \nu > 0$, scales $L_k \geq L_k^{1-\ep} \geq L_k^{1-2\ep} \geq L_k^{1-3\ep} \geq L_k^{1-4\ep} \geq L_{k-1} \geq L_{k-1}^{1-\ep}$, and defects $Q'_1, ..., Q'_N$.  We conclude that
\begin{equation*}
|(H_Q - \bar \lambda)^{-1}(x,y)| \leq e^{L_k^{1-\ep} - m_k |x-y|}.
\end{equation*}
Since the events $\mathcal E_i(Q)$ are $V_{F_k \cap 2Q}$-measurable, we see that $Q$ is good.

\begin{claim}
Property (8) holds.
\end{claim}

Combining the previous two claims, for any $L_k$-square $Q$, we have $\P[\mathcal E_g(Q)] \geq 1 - N L_k^{C \ep - 1/2} \geq 1 - L_k^{-\gamma}$, provided that $\gamma < 1/2 - C \ep$.
\end{proof}

\begin{bibdiv}
\begin{biblist}

\bib{Aizenman-Molchanov}{article}{
   author={Aizenman, Michael},
   author={Molchanov, Stanislav},
   title={Localization at large disorder and at extreme energies: an
   elementary derivation},
   journal={Comm. Math. Phys.},
   volume={157},
   date={1993},
   number={2},
   pages={245--278},
   issn={0010-3616},
   review={\MR{1244867}},
}

\bib{Aizenman-Warzel}{book}{
   author={Aizenman, Michael},
   author={Warzel, Simone},
   title={Random operators},
   series={Graduate Studies in Mathematics},
   volume={168},
   note={Disorder effects on quantum spectra and dynamics},
   publisher={American Mathematical Society, Providence, RI},
   date={2015},
   pages={xiv+326},
   isbn={978-1-4704-1913-4},
   review={\MR{3364516}},
}

\bib{Bellissard-Hislop-Klein-Stolz}{article}{
   author={Bellissard, J.},
   author={Hislopp, P. D.},
   author={Klein, A.},
   author={Stolz, G.},
   conference={
       title={Random Schrödinger operators: universal localization, correlations and interactions},
       address={Banff International Research Station},
       date={April 2009},
   },
   eprint={http://www.birs.ca/workshops/2009/09w5116/report09w5116.pdf}
}

\bib{Bojanov}{article}{
   author={Bojanov, Borislav},
   title={Elementary proof of the Remez inequality},
   journal={Amer. Math. Monthly},
   volume={100},
   date={1993},
   number={5},
   pages={483--485},
   issn={0002-9890},
   review={\MR{1215537}},
   doi={10.2307/2324304},
}

\bib{Bourgain-Kenig}{article}{
   author={Bourgain, Jean},
   author={Kenig, Carlos E.},
   title={On localization in the continuous Anderson-Bernoulli model in
   higher dimension},
   journal={Invent. Math.},
   volume={161},
   date={2005},
   number={2},
   pages={389--426},
   issn={0020-9910},
   review={\MR{2180453}},
   doi={10.1007/s00222-004-0435-7},
}

\bib{Bourgain-Klein}{article}{
   author={Bourgain, Jean},
   author={Klein, Abel},
   title={Bounds on the density of states for Schr\"{o}dinger operators},
   journal={Invent. Math.},
   volume={194},
   date={2013},
   number={1},
   pages={41--72},
   issn={0020-9910},
   review={\MR{3103255}},
   doi={10.1007/s00222-012-0440-1},
}

\bib{Buhovsky-Logunov-Malinnikova-Sodin}{article}{
   author={Lev Buhovsky},
   author={Alexander Logunov},
   author={Eugenia Malinnikova},
   author={Mikhail Sodin},
   title={A discrete harmonic function bounded on a large portion of $\Z^2$ is constant},
   note={arXiv:1712.07902}
}

\bib{Carmona-Klein-Martinelli}{article}{
   author={Carmona, Ren\'{e}},
   author={Klein, Abel},
   author={Martinelli, Fabio},
   title={Anderson localization for Bernoulli and other singular potentials},
   journal={Comm. Math. Phys.},
   volume={108},
   date={1987},
   number={1},
   pages={41--66},
   issn={0010-3616},
   review={\MR{872140}},
}

\bib{Evans}{book}{
   author={Evans, Lawrence C.},
   title={Partial differential equations},
   series={Graduate Studies in Mathematics},
   volume={19},
   edition={2},
   publisher={American Mathematical Society, Providence, RI},
   date={2010},
   pages={xxii+749},
   isbn={978-0-8218-4974-3},
   review={\MR{2597943}},
   doi={10.1090/gsm/019},
}

\bib{Frohlich-Spencer}{article}{
   author={Fr\"{o}hlich, J\"{u}rg},
   author={Spencer, Thomas},
   title={Absence of diffusion in the Anderson tight binding model for large
   disorder or low energy},
   journal={Comm. Math. Phys.},
   volume={88},
   date={1983},
   number={2},
   pages={151--184},
   issn={0010-3616},
   review={\MR{696803}},
}

\bib{Germinet-Klein}{article}{
   author={Germinet, Fran\c{c}ois},
   author={Klein, Abel},
   title={A comprehensive proof of localization for continuous Anderson
   models with singular random potentials},
   journal={J. Eur. Math. Soc. (JEMS)},
   volume={15},
   date={2013},
   number={1},
   pages={53--143},
   issn={1435-9855},
   review={\MR{2998830}},
   doi={10.4171/JEMS/356},
}

\bib{Hundertmark}{article}{
   author={Hundertmark, Dirk},
   title={A short introduction to Anderson localization},
   conference={
      title={Analysis and stochastics of growth processes and interface
      models},
   },
   book={
      publisher={Oxford Univ. Press, Oxford},
   },
   date={2008},
   pages={194--218},
   review={\MR{2603225}},
   doi={10.1093/acprof:oso/9780199239252.003.0009},
}

\bib{Imbrie}{article}{
   author={Imbrie, John Z.},
   title={Localization and Eigenvalue Statistics for the Lattice Anderson model with Discrete Disorder},
   note={arXiv:1705:01916},
}

\bib{Jitomirskaya}{article}{
   author={Jitomirskaya, Svetlana},
   title={Ergodic Schr\"{o}dinger operators (on one foot)},
   conference={
      title={Spectral theory and mathematical physics: a Festschrift in
      honor of Barry Simon's 60th birthday},
   },
   book={
      series={Proc. Sympos. Pure Math.},
      volume={76},
      publisher={Amer. Math. Soc., Providence, RI},
   },
   date={2007},
   pages={613--647},
   review={\MR{2307750}},
   doi={10.1090/pspum/076.2/2307750},
}

\bib{Kabatjanskii-Levenstein}{article}{
   author={Kabatjanski\u\i , G. A.},
   author={Leven\v ste\u\i n, V. I.},
   title={Bounds for packings on the sphere and in space},
   language={Russian},
   journal={Problemy Pereda\v ci Informacii},
   volume={14},
   date={1978},
   number={1},
   pages={3--25},
   issn={0555-2923},
   review={\MR{0514023}},
}

\bib{Kirsch}{article}{
   author={Kirsch, Werner},
   title={An invitation to random Schr\"{o}dinger operators},
   language={English, with English and French summaries},
   note={With an appendix by Fr\'{e}d\'{e}ric Klopp},
   conference={
      title={Random Schr\"{o}dinger operators},
   },
   book={
      series={Panor. Synth\`eses},
      volume={25},
      publisher={Soc. Math. France, Paris},
   },
   date={2008},
   pages={1--119},
   review={\MR{2509110}},
}

\bib{Kunz-Souillard}{article}{
   author={Kunz, Herv\'{e}},
   author={Souillard, Bernard},
   title={Sur le spectre des op\'{e}rateurs aux diff\'{e}rences finies al\'{e}atoires},
   language={French, with English summary},
   journal={Comm. Math. Phys.},
   volume={78},
   date={1980/81},
   number={2},
   pages={201--246},
   issn={0010-3616},
   review={\MR{597748}},
}

\bib{Li-Zhang}{article}{
   author={Linjun Li},
   author={Lingfu Zhang},
   title={Anderson-Bernoulli Localization on the 3D lattice and discrete unique continuation principle},
   note={arXiv:1906.04350},
}

\bib{Lubell}{article}{
   author={Lubell, D.},
   title={A short proof of Sperner's lemma},
   journal={J. Combinatorial Theory},
   volume={1},
   date={1966},
   pages={299},
   review={\MR{0194348}},
}

\bib{Simon}{article}{
   author={Simon, Barry},
   title={Schr\"{o}dinger operators in the twenty-first century},
   conference={
      title={Mathematical physics 2000},
   },
   book={
      publisher={Imp. Coll. Press, London},
   },
   date={2000},
   pages={283--288},
   review={\MR{1773049}},
   doi={10.1142/9781848160224\_0014},
}

\bib{Sperner}{article}{
   author={Sperner, Emanuel},
   title={Ein Satz \"uber Untermengen einer endlichen Menge},
   language={German},
   journal={Math. Z.},
   volume={27},
   date={1928},
   number={1},
   pages={544--548},
   issn={0025-5874},
   review={\MR{1544925}},
   doi={10.1007/BF01171114},
}

\bib{Stein}{book}{
   author={Stein, Elias M.},
   title={Harmonic analysis: real-variable methods, orthogonality, and
   oscillatory integrals},
   series={Princeton Mathematical Series},
   volume={43},
   note={With the assistance of Timothy S. Murphy;
   Monographs in Harmonic Analysis, III},
   publisher={Princeton University Press, Princeton, NJ},
   date={1993},
   pages={xiv+695},
   isbn={0-691-03216-5},
   review={\MR{1232192}},
}

\bib{Stolz}{article}{
   author={Stolz, G\"{u}nter},
   title={An introduction to the mathematics of Anderson localization},
   conference={
      title={Entropy and the quantum II},
   },
   book={
      series={Contemp. Math.},
      volume={552},
      publisher={Amer. Math. Soc., Providence, RI},
   },
   date={2011},
   pages={71--108},
   review={\MR{2868042}},
   doi={10.1090/conm/552/10911},
}

\bib{Tao}{article}{
   author={Terrance Tao},
   title={A cheap version of the Kabatjanskii-Levenstein bound for almost orthogonal vectors},
   note={https://terrytao.wordpress.com}
}

\bib{Wegner}{article}{
   author={Wegner, Franz},
   title={Bounds on the density of states in disordered systems},
   journal={Z. Phys. B},
   volume={44},
   date={1981},
   number={1-2},
   pages={9--15},
   issn={0722-3277},
   review={\MR{639135}},
   doi={10.1007/BF01292646},
}

\end{biblist}
\end{bibdiv}

\end{document}